\newtheorem{thm}{Theorem}[section]
\newtheorem{cor}[thm]{Corollary}
\newtheorem{lem}[thm]{Lemma}
\newtheorem{prop}[thm]{Proposition}
\theoremstyle{remark}
\newtheorem{rem}[thm]{Remark}
\numberwithin{equation}{section}
\theoremstyle{definition}
\newcommand{\R}{\mathbb R}
\newcommand{\C}{\mathcal C}
\newcommand{\B}{\mathcal B}
\newcommand{\dist}{\mathrm{dist}}
\def\XXint#1#2#3{{\setbox0=\hbox{$#1{#2#3}{\intr}$}
     \vcenter{\hbox{$#2#3$}}\kern-.5\wd0}}
\title{Extinction profile of complete non-compact solutions to the Yamabe flow}
\author{Panagiota Daskalopoulos}
\address{ {\bf P. Daskalopoulos:} Department of Mathematics, Columbia University, 2990 Broadway, New York, NY 10027, USA.}
\email{pdaskalo@math.columbia.edu}
\author{John King}
\address{ {\bf J. King:}  School of Mathematical Sciences, The University of Nottingham, University Park,
Nottingham, NG7 2RD}\email{john.king@nottingham.ac.uk
}
\author{Natasa Sesum}
\address{ {\bf N. Sesum:} Department of Mathematics, Rutgers University, 110 Frelinghuysen road, Piscataway,  NJ 08854, USA.}\email{natasas@math.rutgers.edu
}
\begin{document}

\maketitle

\begin{abstract}
This work addresses the  {\em singularity formation}  of  complete non-compact solutions to the conformally flat Yamabe flow whose conformal factors have {\em cylindrical behavior at infinity}. Their singularity profiles happen to be  {\em Yamabe solitons}, which are  {\em self-similar solutions}  to the fast diffusion equation satisfied by the conformal factor of the  evolving metric. 
The   self-similar profile is determined  by the second order asymptotics at infinity of the initial data
which is  matched  with  that of  the  corresponding self-similar solution. 
Solutions  may become  extinct at the extinction
time $T$ of the cylindrical tail  or may live longer than $T$. In the first case the singularity profile  is described by a {\em Yamabe shrinker}  that becomes extinct at time $T$. 
In the second  case, the singularity profile  is described by a {\em singular}  Yamabe shrinker slightly before $T$
 and by  a matching  {\em Yamabe expander}  slightly after $T$ . 

\end{abstract}

\section{Introduction}

We consider a complete non-compact  metric $g= u^{{4}/(N+2)}\, dx^2$ which is conformally equivalent to
the standard euclidean metric of  $\R^N$ and evolves by the {\em Yamabe flow} 
\begin{equation}
\label{eq-yf}
\frac{\partial g}{\partial t}  = - R \,  g 
\end{equation}
where $R$ denotes the scalar curvature with respect to  metric $g$. Our goal is to study the {\em singularity
formation}  of  metric $g$ at a singular time $T$,  under the assumption that the initial metric $g_0$ has  {\em cylindrical behavior} at infinity. 

This flow was introduced by R. Hamilton  \cite{H} as an approach to solve  the {\em Yamabe problem}  on manifolds of positive conformal Yamabe invariant.
It is the negative $L^2$-gradient flow of the total scalar curvature, restricted to a given conformal class. 
Hamilton \cite{H}  showed   the  existence of the  normalized Yamabe flow (which is the re-parametrization of (1.1) to keep the volume fixed) for all time; moreover   he established  the exponential convergence of the flow to a metric of constant scalar curvature under the assumption that the initial metric has negative  scalar curvature.

Since then,  there have been  a number of works on   the  convergence of the  Yamabe flow
on a compact manifold to a metric of constant scalar curvature.  Chow \cite{Ch} showed the  convergence of the flow  under the conditions that the initial metric is locally conformally flat and of positive Ricci curvature. 
The convergence of the flow for any locally conformally flat initial metric was shown by Ye  \cite{Ye}
(see also a relevant result of  Del Pino and Saez \cite{DS} for the conformally flat case).

Schwetlick and Struwe  \cite{SS}  obtained the convergence 
of the  Yamabe flow on a general compact manifold under 
a suitable Kazdan-Warner type of condition that rules out the formation of bubbles 
and that  is satisfied  (via the positive mass Theorem) in dimensions $3 \leq N \leq 5$. 
The convergence  result    for any general compact manifold  was established by
Brendle  \cite{S2} and \cite{S1}   (up to a technical assumption,  in dimensions $N \ge 6$, on the rate of vanishing of Weyl tensor at the points at which it vanishes):  
starting with any smooth  metric on a compact manifold, the normalized Yamabe flow   converges to a metric of constant scalar curvature.  

Even though the analogue of Perelman's monotonicity formula is still lacking for the Yamabe flow, one expects that  Yamabe soliton solutions  model finite time singularities. These are special solutions $g=g_{ij}$ of the Yamabe flow 
\eqref{eq-yf} for which there exist a 
{\em potential  function $P(x,t)$}  so that
$$(R-\rho)g_{ij} = \nabla_i\nabla_j P, \qquad \rho \in \{1,-1,0\} $$
where the covariant derivatives on the right hand side are taken with respect to metric $g(\cdot,t)$. 
Depending on the sign of the constant $\rho$,  a Yamabe soliton is  called a {\em Yamabe shrinker}, a {\em Yamabe expander} 
or a { Yamabe steady soliton}  if $\rho=1,-1$ or $0$ respectively. 
The classification of locally conformally flat Yamabe solitons
with positive  sectional curvature was recently established in \cite{DS2} (see also \cite{CSZ} and \cite{CMM}). 
It is shown in \cite{DS2} that such solitons are globally conformally equivalent to $\R^N$ and   correspond to self-similar solutions of the fast-diffusion equation \eqref{eqn-fd} satisfied by the conformal factor.  A complete description of those solutions is  given in \cite{DS2}. 
In \cite{CSZ} the assumption of positive sectional curvature was relaxed to that of nonnegative Ricci curvature.

Our goal in this work is to relate   the singularity profile of conformally flat solutions to the Yamabe flow
whose conformal factors have {\em cylindrical behavior at infinity} with a class of self-similar shrinking Yamabe solitons
that have   matched asymptotic  behavior at infinity. One special result in this direction was previously shown in  \cite{DS1} 
and \cite{BBDGV}, where the $L^1$ stability around the explicit Barenblatt profile was shown. 


\medskip

By observing that  the conformal metric $g= u^{{4}/(N+2)}\, dx^2$ has scalar curvature
$$R = - \frac{4(N-1)}{N-2}\, u^{-1} \, \Delta  u^{\frac {N-2}{N+2}}$$
it follows that the function $u$ evolves by the  fast diffusion equation $u_t = \frac{N-1}{m}\,\Delta u^m$, with exponent 
$m=(N-2)/(N+2)$.  Therefore studying the Yamabe flow equation \eqref{eq-yf} in the conformally flat case is equivalent to studying the fast diffusion equation on $\R^N$. It is well known \cite{HP}   that for any exponent  $0 < m < 1$   the Cauchy problem 
\begin{equation}
\begin{cases} \label{eqn-u}
u_t  = \Delta u^m  & \mbox{on} \,\,  \R^N \times (0,T)\\
u(\cdot,0) = u_0  & \mbox{on} \,\,  \R^N
\end{cases}
\end{equation}
with nonnegative and locally integrable initial data $u_0$ admits a unique weak solution and that bounded
solutions are  smooth. We refer the reader to \cite{DK} and \cite{CV} for extensions 
of the results in \cite{HP} to the case that
the initial data is a nonnengative Borel measure $\mu_0$ and to \cite{King} for formal results that suggest that, within the setting \eqref{eqn-u}, many of the phenomena described below are more generally relevant to the range $0 < m < \frac{N-2}{N}$ of exponents.

From now on we will  fix $m = (N-2)/(N+2)$ and  set
$$n:= 1-m = \frac 4{N+2}.$$ 
We will assume  that the initial metric 
$g_0 = u_0^{{4}/(N+2)}\, dx_idx_j$ is complete, non-compact and has  {\em cylindrical behavior} at infinity,
namely
\begin{equation} \label{eqn-u0infty}
u_0(x) = \left(\frac{C^* T}{|x|^2}\right)^{1/n} \big ( 1 + o(1) \big ), \qquad \mbox{as} \,\, |x| \to \infty
\end{equation} 
with $C^*$  given by
\begin{equation}
\label{eqn-cstar}
C^*:= \frac{2\, \big ( ((1-m) N -2\big )}n, \qquad n=1-m, \quad m=\frac{N-2}{N+2}
\end{equation}
and $T >0$ any positive constant.  
One observes that the function  
\begin{equation}
\label{eqn-cyl2}
 \C (x,t) = \left(\frac{C^*\, (T-t)}{|x|^2}\right)^{1/n}
\end{equation}
defines a  cylindrical solution of \eqref{eqn-u},   namely  $g(t) = \C^{{4}/(N+2)}(\cdot,t)\, dx^2$
represents a   shrinking cylindrical metric. Its  initial data $\C_0 := \C(\cdot,0)$ satisfies  \eqref{eqn-u0infty} and the solution becomes
extinct at time $t=T$.
This suggests that  the cylindrical tail of any solution to \eqref{eqn-u} that  satisfies
\eqref{eqn-u0infty} becomes extinct at time $T$. Indeed, it will be shown  in Proposition \ref{prop-tail}  that if the initial data $u_0(x)$ satisfies \eqref{eqn-u0infty} then for the solution $u$ we have
\begin{equation}
\label{eqn-tail0}
u(x,t) = \left(\frac{C^*(T-t)}{|x|^2}\right)^{1/n} (1+o(1)), \qquad \mbox{as} \,\, |x| \to \infty.
\end{equation}

We will see in this work that  the solution $u$ starting at $u_0$ that satisfies \eqref{eqn-u0infty}  {\em may or  may not become extinct at time $T$},   depending  on
the {\em second order asymptotic behavior}, as $|x| \to \infty$, of the cylindrical tail of the initial data. 
In either case the 
metric $g(t)=u^{{4}/(N+2)}(\cdot,t)\, dx^2$ will develop a singularity at time $T$. Our goal  is to study 
these singularities. 
We will show in  sections \ref{sec-conv} and \ref{sec-longer} that  rescaled limits of   solutions  $u$ with initial condition satisfying \eqref{eqn-u0infty} 
behave near a singularity at time $T$ as self-similar shrinking solutions (Yamabe shrinkers).  These are  special solutions of  the fast-diffusion  equation
\begin{equation} \label{eqn-fd}
u_t =\Delta u^{\frac{N-2}{N+2}}
\end{equation} 
of  the form
\begin{equation}
\label{eqn-ss}
U(x,t) = (T-t)^\alpha \, f(y), \quad y= x \, (T-t)^\beta,  \qquad \alpha = \frac{1+ 2\beta}n,  \,\,\,  \beta >0.
\end{equation}
It follows that the  function   $f$  satisfies  the elliptic equation 
\begin{equation}\label{eqn-ell}
 \Delta f^{\frac {N-2}{N+2}}+ \beta  \, y \cdot \nabla f    +  \alpha \, f =0. 
\qquad \mbox{on}\,\,\, \R^N
\end{equation}
It is well known (in \cite{V}, Section 3.2.2)  that, for any given $\beta >0$ and  $\alpha = (1+2\beta)/n$,   equation \eqref{eqn-ell} admits
an one parameter family $f_\lambda$, $\lambda >0$,  of radially symmetric smooth positive solutions that have  cylindrical behavior at infinity, namely
\begin{equation}
\label{eqn-binfty}
f_\lambda(y) =  \left ( \frac{C^*}{|y|^2}  \right )^{1/n}(1+o_{\lambda}(1)), \qquad \mbox{as} \,\, y \to \infty.
\end{equation}
with $C^*$ given by \eqref{eqn-cstar}.   We will refer to them as  to {\em cigar solitons}. 
The parameter $\lambda$ is just a dilation parameter. Indeed, it follows from the results in \cite{DS2} that smooth solutions of equation \eqref{eqn-ell}
are radially symmetric and they  are uniquely determined by their value at the origin. 
 In the special case that   $\alpha  = \beta  N$ the solutions are given in the closed form
 \begin{equation}\label{eqn-ub}
\B_\lambda (y) = \left ( \frac {C^* }{\lambda^2 + |y|^2} \right )^{1/n}
\end{equation}
and we will refer to them as  Barenblatt profiles.

In order to study the singularities  of  a  metric $g=u^{4/(N+2)} dx^2$ evolving by \eqref{eqn-u}  and with initial data satisfying \eqref{eqn-u0infty}   we need to understand  the second order asymptotic behavior 
at infinity of the self-similar profiles $f_\lambda$.  
 We will  achieve this  in section \ref{sec-asymp} by 
linearizing  equation \eqref{eqn-ell} around the cylindrical solution. It will be more convenient to
work in  cylindrical coordinates where the cylindrical solution becomes constant. 
Let $\gamma_{1,2}$ be the solutions to  the characteristic equation of the corresponding linearized equation
(that is  equation \eqref{eqn-w9} in section \ref{sec-asymp}). They satisfy 
\begin{equation}
\label{eqn-eqgamma}
\gamma^2 + \beta(N-2)\gamma + (N-2) = 0,
\end{equation}
which gives 
\begin{equation}
\label{eqn-gamma}
\gamma_{1,2} = \frac{\beta(N-2) \mp \sqrt{\beta^2(N-2)^2 - 4(N-2)}}{2}.
\end{equation}
We see that we need to have $\beta \geq {2}/{\sqrt{N-2}}$ in order for $\gamma_{1,2}$ to be real
and the corresponding  solution to have non-oscillatory  behavior. 

Our first result concerns   the second order asymptotics of smooth profiles $f$ on $\R^N$ which appear to model the singular behavior of some evolving metrics $g=u^{4/(N+2)} dx^2$ that  become extinct at a singular  time $T$.

\begin{thm}
\label{thm-ss}
Let $m = (N-2)/(N+2)$, $n=1-m$, $N \ge 3$, $C^* = 2\, ( (1-m) N - 2 )/n$, $\beta_0 := {2}/{\sqrt{N-2}}$ and 
$\beta_1 := 1/(2m)$. The following hold:
\begin{itemize}
\item
Let  $N \ge 6$ and $\beta > \beta_0$ or $3 \le N < 6$ and $\beta > \beta_1$: For any $B >0$ there exists a unique 
radially symmetric smooth solution  $f_{\beta,B}$ of \eqref{eqn-ell} that  satisfies 
\begin{equation}\label{eqn-asympt1}
f_{\beta,B}(y) = \left (\frac{C^*}{|y|^2} \right )^{1/n}\,  \big (1 - {B}\, {|y|^{-\gamma}} + o_B(|y|^{-\gamma})  \big)
\end{equation}
with $\gamma=\gamma_1$ given by \eqref{eqn-gamma}.
\item
Let  $3 \leq  N < 6$ and $\beta_0 < \beta < \beta_1$:  For any $B < 0$ there exists a unique radially symmetric smooth
solution  $f_{\beta,B}$ of \eqref{eqn-ell} that   
satisfies \eqref{eqn-asympt1} with 
$\gamma=\gamma_1$ given by \eqref{eqn-gamma}. 
\item Let   $3 \leq  N < 6$ and $\beta = \beta_1$:   For any $B < 0$ there exists a unique radially symmetric smooth
solution  $f_{\beta,B}$ of \eqref{eqn-ell} that 
satisfies \eqref{eqn-asympt1} with 
$\gamma=\gamma_2=2$ and which is given in closed form by \eqref{eqn-ub}. 
\end{itemize}
In all of the above cases we will denote by $U_{\beta,B}$ the self-similar solution of equation \eqref{eqn-fd}.
It is given in terms of $f_{\beta,B}$ by \eqref{eqn-ss} where $f_{\beta,B}$ solves \eqref{eqn-ell}.
\end{thm}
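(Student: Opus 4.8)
The plan is to reduce equation \eqref{eqn-ell} to an ODE, to linearise it about the cylindrical end, and to identify the correction exponents $\gamma_{1,2}$ with the decay rates of the known family $f_\lambda$ by means of the dilation symmetry of the equation. By \cite{DS2} every smooth solution of \eqref{eqn-ell} is radial and is determined by its value at the origin, and by \cite{V} for each $\beta>0$ there is a one-parameter family $f_\lambda$ of such solutions satisfying \eqref{eqn-binfty}; thus existence is not at issue and the whole problem is to extract the second-order behaviour and to put it in bijection with the parameter $B$. Writing $r=|y|$ and $m=(N-2)/(N+2)$, radial solutions satisfy
\begin{equation*}
(f^m)'' + \frac{N-1}{r}\,(f^m)' + \beta\, r f' + \alpha f = 0 ,
\end{equation*}
and I would pass to the logarithmic variable $s=\log r$ and factor out the cylindrical rate by writing $f(r)=(C^*/r^2)^{1/n}\big(1+\phi(s)\big)$, so that \eqref{eqn-binfty} becomes the statement $\phi\to0$ as $s\to+\infty$; the constant $C^*$ is exactly the one for which the leading (constant) balance of the diffusion term, the drift $\beta r f'$ and the zero-order term $\alpha f$—all of which scale like $r^{-2/n}$ at the end—cancels, so the equation for $\phi$ is autonomous to leading order.

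Linearising about $\phi\equiv0$ removes the constant balance and leaves a constant-coefficient second-order ODE in $s$; seeking $\phi\sim r^{\mu}$ yields the characteristic equation \eqref{eqn-eqgamma}, and since its two roots are negative the perturbation decays like $r^{-\gamma_{1}}$ or $r^{-\gamma_{2}}$ with the positive rates $\gamma_{1,2}$ of \eqref{eqn-gamma}. These are real precisely when $\beta\ge\beta_0=2/\sqrt{N-2}$, the non-oscillatory regime singled out in the statement, and since $\gamma_1\gamma_2=N-2>0$ and $\gamma_1+\gamma_2=\beta(N-2)>0$ both are positive with $\gamma_1<\gamma_2$. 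Hence a generic solution decaying to the cylinder behaves like $c_1 r^{-\gamma_1}+c_2 r^{-\gamma_2}$ and is dominated by the slower mode $r^{-\gamma_1}$, which is what produces \eqref{eqn-asympt1} with $\gamma=\gamma_1$, the coefficient being $B=-c_1$. To promote this formal balance to a genuine expansion I would set up a contraction in a weighted space $\{\,r^{\gamma_1}\phi\in L^\infty\,\}$ near infinity, using the spectral gap $\gamma_2-\gamma_1>0$ to show that the quadratic and higher terms coming from $(1+\phi)^m$ feed only into the remainder $o_B(r^{-\gamma_1})$ (with the usual care about possible resonances when $\gamma_2$ or $2\gamma_1$ meets an exponent produced by the nonlinearity).

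The correspondence between $B$ and the family $f_\lambda$ I would obtain from the exact scaling of \eqref{eqn-ell}: if $f$ solves it then so does $\lambda^{a}f(\lambda^{an/2}\,\cdot)$ for every $a$, and a direct check shows this scaling preserves the leading constant in \eqref{eqn-binfty} while multiplying the amplitude of the $r^{-\gamma_1}$-mode by a strictly positive, monotone factor $\lambda^{\kappa}$ with $\kappa\ne0$. Hence along the orbit $f_\lambda$ the amplitude $B=B(\lambda)$ is strictly monotone and sweeps out an entire half-line, and crucially its \emph{sign} is a dilation invariant fixed by $(\beta,N)$ alone. Monotonicity then gives, for each admissible value of $B$, a unique $\lambda$ and therefore—through the origin-value parametrisation of \cite{DS2}—a unique radial solution $f_{\beta,B}$, which settles both existence and uniqueness once the admissible sign is known.

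It remains to fix that sign and to understand the exceptional exponent $\beta_1=1/(2m)$. A short computation shows $\beta_1$ is exactly the value for which $\alpha=\beta N$, so there the family $f_\lambda$ coincides with the explicit profiles \eqref{eqn-ub}; expanding \eqref{eqn-ub} gives a correction of size $r^{-2}$, i.e. the $r^{-\gamma_1}$-mode is absent and the decay is governed by $\gamma_2$. For $N\ge6$ one has $\gamma_1=2\le(N-2)/2=\gamma_2$ at $\beta_1$, so this correction is itself the slower mode and fits the generic pattern with a single fixed sign on all of $\beta>\beta_0$; for $3\le N<6$ instead $\gamma_1=(N-2)/2<2=\gamma_2$, the $r^{-\gamma_1}$-amplitude vanishes identically on the line $\beta=\beta_1$, and the leading correction jumps to $\gamma_2=2$, which is the content of the third bullet (its coefficient being read directly from \eqref{eqn-ub}). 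For $\beta\ne\beta_1$ the sign of $B$ is then determined by continuity together with the Barenblatt anchor: since the sign is a scaling invariant it can change only where the $r^{-\gamma_1}$-amplitude vanishes, and one shows this degeneration forces the profile to be Barenblatt, so for $3\le N<6$ it occurs only at $\beta_1$ while for $N\ge6$ it occurs nowhere; thus $B$ keeps a constant sign on each of $(\beta_0,\beta_1)$ and $(\beta_1,\infty)$, and evaluating it on one side (for instance by computing the $\beta$-derivative at $\beta_1$ of the $\gamma_1$-amplitude of the normalised profile) fixes it in each case. I expect this last step—pinning the sign of the $\gamma_1$-amplitude as a function of $\beta$ and proving it changes sign only at $\beta_1$—to be the main obstacle, since it is invisible to the linear theory and requires either a comparison/monotonicity argument along the shooting trajectories from $r=0$ or a controlled perturbation expansion about the degenerate Barenblatt family.
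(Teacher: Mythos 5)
Your overall scaffolding matches the paper's: the logarithmic/cylindrical variable, linearization of \eqref{eqn-ell} about the constant cylinder solution, the characteristic roots $\gamma_{1,2}$ of \eqref{eqn-eqgamma}, the dilation computation $B_\lambda=B_0\lambda^{-\gamma}$ combined with the origin-value parametrization of \cite{DS2} to get uniqueness, and the identification of $\beta_1$ (where $\alpha=\beta N$) with the explicit Barenblatt profile \eqref{eqn-ub}, whose $r^{-2}$ correction is the fast mode $\gamma_2$ when $3\le N<6$. But there is a genuine gap, and it is exactly the step you flag and defer at the end: your scheme rests on two unproven claims. First, you assert that vanishing of the $\gamma_1$-amplitude "forces the profile to be Barenblatt" and that the sign of $B$ can therefore only change at $\beta_1$; neither the continuity of the normalized amplitude in $\beta$ nor this rigidity is established, and a priori nothing in your argument excludes a smooth profile at some $\beta\ne\beta_1$ decaying at the fast rate $e^{-\gamma_2 s}$. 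Second, even before the sign question, your claim that the given global profiles behave like $c_1e^{-\gamma_1 s}+c_2e^{-\gamma_2 s}$ needs the a priori bound $|w(s)|\le Ce^{-\gamma_1 s}$ for $w:=v-1$; your weighted contraction near infinity constructs solutions with \emph{prescribed} tails, but does not by itself yield the expansion of the particular solutions $f_\lambda$ that are smooth at the origin --- matching tail data at infinity to smoothness at $r=0$ is precisely the global connection problem that no local-at-infinity argument can decide.

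The paper closes both points simultaneously with a device absent from your proposal: since smoothness at the origin gives $w\to-1$ as $s\to-\infty$, one may integrate the variation-of-parameters formula from $-\infty$ and obtain (see \eqref{eqn-w3}, \eqref{eqn-Ai})
\[
w(s)=-C_N\left(A_1\,e^{-\gamma_1 s}\int_{-\infty}^{s}e^{\gamma_1 t}\phi(t)\,dt-A_2\,e^{-\gamma_2 s}\int_{-\infty}^{s}e^{\gamma_2 t}\phi(t)\,dt\right),
\qquad A_i=\tfrac{p}{p-1}-\beta\gamma_i,
\]
where $\phi(w)=(1+w)^p-1-pw\ge 0$ by convexity (see \eqref{eqn-phit}). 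The nonnegativity of $\phi$ drives a bootstrap (Lemma \ref{lem-www}) proving $|w(s)|\le Ce^{-\gamma_1 s}$ and $I_1=\int_{-\infty}^{+\infty}e^{\gamma_1 t}\phi(t)\,dt\in(0,\infty)$, whence $w(s)=-C_N A_1 I_1\,e^{-\gamma_1 s}(1+o(1))$ (Lemma \ref{lem-www33}). Thus the amplitude is $-C_N A_1 I_1$ with $C_N>0$ and $I_1>0$, so the sign you call ``invisible to the linear theory'' is read off from the explicit algebraic function $A_1(\beta)$: it is increasing in $\beta$, vanishes at $\beta_1$ precisely when $N\le 6$ (where the solution is the explicit Barenblatt), is positive for $N\ge 6$, $\beta>\beta_0$, and negative for $N<6$, $\beta_0<\beta<\beta_1$ --- which is the sign dichotomy of the three bullets (Proposition \ref{prop-cyl}). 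This representation also proves your rigidity claim as a one-line corollary (amplitude zero iff $A_1=0$), making the shooting/continuity machinery you sketch unnecessary; without it, or a fully worked-out substitute, your argument does not establish the theorem.
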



While the previous theorem provides a complete description of 
smooth self-similar solutions of equation \eqref{eqn-fd} of the form \eqref{eqn-ss} with cylindrical behavior 
at infinity,  one 
 may ask whether there exist other radially symmetric  self-similar solutions  with singular behavior at $r=0$. The answer to this question is indeed affirmative as stated in our next result. We will  see in
section \ref{sec-longer} that such solutions 
model the  behavior of evolving metrics $g=u^{4/(N+2)} dx_idx_j$ that  do not become extinct at a singular  time $T$, but instead pinch off.

\begin{thm}
\label{thm-ss-sing}
Let $N \ge 3$, $m = (N-2)/(N+2)$, $n = 1 - m$, $C^* = 2\, ( (1-m) N - 2 )/n$ and $\beta_1 = 1/(2m)$. Then for any $\beta > \beta_1$ and $B >0$, there exists 
a unique radially symmetric solution $g_{\beta,B}$ of equation \eqref{eqn-ell} that  is smooth on $\R^N \setminus \{0\}$
and satisfies 
\begin{equation}
\label{eqn-asympt2}
g_{\beta,B}(y) =\left (\frac{C^*}{|y|^2} \right )^{1/n}\,  (1 + B|y|^{-\gamma} + o_B(|y|^{-\gamma})), \qquad \mbox{as} \,\,\, |y| \to \infty 
\end{equation}
and 
\begin{equation}\label{eqn-orig} 
g_{\beta,B}(y) = K_B \,  |y|^{-\alpha /\beta} (1+o(1)), \qquad \mbox{as} \,\,\, |y|\to  0
\end{equation}
with $K_B$  a constant depending on $B$ and $\gamma := \gamma_1$. We will denote by 
$V_{\beta,B}$  the self-similar solution of equation \eqref{eqn-fd}
which is given in terms of $g_{\beta,B}$ by \eqref{eqn-ss}.
\end{thm}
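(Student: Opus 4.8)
The plan is to reduce \eqref{eqn-ell} to an autonomous second order ODE and run a shooting argument from the singular end $r=0$, paralleling the analysis of the smooth profiles in Theorem \ref{thm-ss} but now selecting the branch that blows up at the origin. Writing $r=|y|$ and $f(r)=(C^*/r^2)^{1/n}\,w(s)$ with $s=\log r$ --- the change of variables of Section \ref{sec-asymp} in which the cylindrical solution becomes the constant state $w\equiv 1$ --- equation \eqref{eqn-ell} becomes autonomous in $s$, since the dilation $f\mapsto \lambda^{-2/n}f(\cdot/\lambda)$ that leaves \eqref{eqn-ell} invariant acts as the translation $s\mapsto s-\log\lambda$. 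Under this translation the coefficient of the slow mode $r^{-\gamma_1}$ rescales as $B\mapsto \lambda^{\gamma_1}B$, so the sign of $B$ is a dilation invariant while its modulus can be normalized; it therefore suffices to produce one orbit with $B>0$ and to show it is unique up to dilation. I would record at the outset the algebraic identity that $\beta>\beta_1$ is equivalent to $\alpha/\beta<N$, so that the prospective origin profile $r^{-\alpha/\beta}$ of \eqref{eqn-orig} is locally integrable; this is the structural reason the singular branch exists exactly in the range $\beta>\beta_1$ and is absent in the remaining ranges of Theorem \ref{thm-ss}.

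First I would construct the local solutions at the origin. Near $r=0$ the diffusion term $\Delta f^{m}$ is subdominant and the leading balance of \eqref{eqn-ell} is $\beta\,r f'+\alpha f=0$, which integrates to $f\sim K\,r^{-\alpha/\beta}$. Substituting $f=K\,r^{-\alpha/\beta}(1+\psi)$ one checks that the diffusion term re-enters at relative order $r^{1/\beta}$, so $\psi$ admits a convergent expansion in powers of $r^{1/\beta}$ with $\psi\to 0$ as $r\to 0$. This produces, for each $K>0$, a unique solution smooth on a punctured neighbourhood of the origin with the behaviour \eqref{eqn-orig}; by the dilation symmetry these are all $s$-translates of a single orbit, so the origin end contributes no free parameter beyond the normalization.

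The core of the proof is the global connection: I would continue this orbit outward and show that (i) it stays positive and exists for all $r>0$, and (ii) as $r\to\infty$ it converges to the cylinder $w\equiv 1$ with correction coefficient $B>0$, i.e.\ from above. Because $\beta>\beta_1\ge\beta_0$ the roots $\gamma_{1,2}$ of \eqref{eqn-gamma} are real and positive, the state $w=1$ is a nodal sink as $s\to+\infty$, and every orbit reaching a neighbourhood of it converges non-oscillatorily at the generic rate $r^{-\gamma_1}$ with a well-defined sign of $B$; the exceptional orbit with $B=0$ decays at the fast rate $r^{-\gamma_2}$ and is precisely the strong stable manifold separating the smooth profiles of Theorem \ref{thm-ss} in this range of $\beta$ (which approach the origin with $w\to 0$ and hence with negative slow-mode coefficient) from the singular profiles constructed here (for which $w\to+\infty$ as $s\to-\infty$). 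The task thus reduces to trapping the singular orbit on the $w>1$ side of this separatrix and proving it limits onto $w=1$; I anticipate doing this by exhibiting an invariant region, or a monotonicity of $w$ or of a suitable energy along the flow, using that $w\to+\infty$ at the origin forces the orbit to the $B>0$ side. This sign determination, together with ruling out finite-$r$ extinction or blow-up, is the main obstacle, and it is exactly where the hypothesis $\beta>\beta_1$ is used.

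Finally, uniqueness follows once the connection is in hand: modulo the $s$-translation there is a single orbit running from the singular state at $r=0$ to the cylinder at $r=\infty$, and prescribing the value $B>0$ of the slow mode fixes the translation uniquely; the constant $K_B$ in \eqref{eqn-orig} is then obtained by propagating this normalization back to the origin, so it is a definite function of $B$. Passing from $g_{\beta,B}$ to the self-similar solution $V_{\beta,B}$ of \eqref{eqn-fd} through \eqref{eqn-ss} is immediate. The delicate points throughout are the matching of the two asymptotic regimes across the whole half-line and the rigorous exclusion of oscillation in the approach to the cylinder.
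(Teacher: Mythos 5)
Your reduction to an autonomous ODE in $s=\log r$, the local expansion at the singular end, and the remark that the sign of $B$ is a dilation invariant (so uniqueness is uniqueness modulo $s$-translation, with $K_B$ then determined by $B$) all match the structure of the paper's argument. But there is a genuine gap precisely at what you yourself call ``the main obstacle'': your steps (i)--(ii) --- global existence and positivity of the orbit, trapping it on the $w>1$ side, convergence to the cylinder, and strict positivity of $B$ --- are announced (``I anticipate doing this by exhibiting an invariant region, or a monotonicity of $w$ or of a suitable energy'') but never carried out, and it is not evident that any purely ODE-level trapping works. The paper (Lemma \ref{lem-V-B}) resolves these points with \emph{parabolic} input that your outline does not identify: the lower barrier $g>f_{\beta,B}$ comes from a non-crossing principle resting on uniqueness of the Dirichlet problem for \eqref{eqn-ell} on balls, valid exactly when $\beta N>\alpha$; the bound preventing finite-$r$ blow-up comes from the Aronson--B\'enilan inequality applied to the \emph{ancient solution} $V_{\beta,B}$, giving $\Delta g^m\le 0$ and hence $g^m(r)\le C(1+r_0^{2-N})$; and --- an alternative your phase portrait never confronts --- the orbit could a priori terminate at the spherical critical point $C=(-(N+2),0)$, i.e.\ $g\sim|y|^{-(N+2)}$ at infinity, which the paper excludes via the $L^1$-contraction principle between $V_{\beta,B}$ and a spherical solution with the same extinction time, using that $(T-t)^{\beta N-\alpha}\to 0$ precisely for $\beta>\beta_1$. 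Your hoped-for trapping above the cylinder would subsume this exclusion, but you supply no mechanism; in the paper it is Proposition \ref{cor-mon}, again via non-crossing together with the ordering of the dilation family $g_\lambda$ forced by its behavior at the origin.

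The sign determination $B>0$ is likewise not a consequence of ``generic rate at a nodal sink.'' One must rule out the fast orbit, and your claim that the $B=0$ orbit is the strong stable manifold separating the smooth profiles of Theorem \ref{thm-ss} from the singular ones is an unproven phase-portrait assertion (even the input that the smooth profiles carry a \emph{negative} slow-mode coefficient for $\beta>\beta_1$ is itself a theorem of the paper, Lemmas \ref{lem-www} and \ref{lem-www33}, not a free fact). The paper instead represents $w$ by variation of parameters from a \emph{finite} base point $s_0$ --- integration from $-\infty$ is unavailable because $w$ blows up at the singular end, a point your linearization glosses over since $\phi(w)=(1+w)^p-1-pw$ is not small along the whole orbit --- derives $0\le w(s)\le C(s_0)e^{-\gamma_1 s}$ from $w>0$ and $\phi\ge 0$, and then shows the slow-mode coefficient $I(s_0)$ cannot vanish for every $s_0$: that degeneracy would force $\phi(s)=Ce^{-(\gamma_1+A_1/\beta)s}$ exactly, contradicting the origin exponent $w(s)\sim e^{-(\alpha/\beta-\frac{1}{p-1})s}$ as $s\to-\infty$ dictated by \eqref{eqn-orig}. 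Without substitutes for these four ingredients (lower barrier, upper bound, exclusion of the spherical endpoint, and the $I(s_0)\not\equiv 0$ argument), your proposal is a correct road map but not a proof.
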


One easily concludes, using the behavior of $g_{\beta,B}$ at the origin,  that
\begin{equation}
\label{eq-V-lim}
\lim_{ t \to T^-} V_{\beta,B}(x,t) = K_B \,  |x|^{-\alpha /\beta} \qquad \forall x \neq 0.
\end{equation}
For any $T>0$ and any $K >0$  we will denote by 
\begin{equation}\label{eqn-expan}
W_{\beta,K}(x,t) = (t-T)^{\alpha} h_{\beta,K}(x\, (t-T)^{\beta}), \qquad t> T
\end{equation}
the forward self-similar
solutions (Yamabe expanders) that satisfy 
\begin{equation}\label{eqn-hhh1}
h_{\beta,K}(y) = K \, |y|^{-\alpha/\beta}(1+o(1)), \qquad  \mbox{as} \,\, |y|\to 0 
\end{equation}
and
\begin{equation}\label{eqn-hhh2}  
h_{\beta,K}(y) = D_K |y|^{-(N+2)} (1+o(1)), \qquad  \mbox{as} \,\, |y|\to+\infty 
\end{equation}
with $D_K$ a constant depending on $K$. In \cite{V} Vazquez proves the existence of those solutions starting at   $W_{\beta, K}(x,T) = K\,  |x|^{-\alpha/\beta}$.  The existence of such solutions and their intermediate asymptotic role was conjectured in \cite{King} 
on the basis of a phase-plane analysis.

\medskip
We will see in sections \ref{sec-conv} and \ref{sec-longer}  that    the singularity profile of the metric $g=u^{4/(N+2)} dx_idx_j$  evolving by \eqref{eqn-u} at a singular time $T$ is closely related to the self-similar solutions
given above. In particular, the smooth self-similar solutions $U_{\beta,B}$ model the singular behavior  of some solutions $u$ in the case that $u(\cdot,T)$ vanishes identically  at time $T$, while the singular solutions 
$V_{\beta,B}$ and $W_{\beta,K}$ model the singularity of some solutions $u$ in the case
that $u(\cdot,T)$ does not vanish  identically  at the extinction time $T$ of the cylindrical tail. 

In describing the asymptotic profile of the solution slightly before time $T$ we will  consider the rescaling from the left defined by
\begin{equation}
\label{eq-resc-left}
\bar{u}(y,\tau) := (T-t)^{-\alpha} u(y\, (T-t)^{-\beta},t) |_{t= T(1-e^{-\tau})}, \qquad 
(y,\tau)\in \mathbb{R}^N \times (0,\infty).
\end{equation}
In describing the asymptotic profile of the solution slightly after time $T$ (if the solution lives for $t\in [0,T^*)$ and $T^* > T$) we will consider the rescaling from the right defined by
\begin{equation}
\label{eq-resc-right}
\hat{u}(y,\tau) := (t-T)^{\alpha} u(y\,(t-T)^{\beta},t)|_{t= T(1 + e^{\tau})}, \qquad (y,\tau) \in \mathbb{R}^N
 \times (-\infty, \tau^* )
\end{equation}
with $\tau^*$ such that $T^*= T(1 + e^{\tau^*})$. 
It follows by direct computation that both $\bar{u}$ and $\hat{u}$  satisfy the nonlinear Fokker-Plank  type equation
\begin{equation}
\label{eq-resc100}
\bar{u}_{\tau} = \Delta \bar{u}^m + \beta \, \mbox{div}  (y \cdot \bar{u}) + (\alpha - \beta N)\,  \bar{u}.
\end{equation}

\medskip
Let us begin by discussing the case when the solution with the cylindrical behavior at infinity becomes extinct at the time $T$ when its cylindrical tail disappears.   We will  assume in this case that either
\begin{itemize}
\item
$N \geq 3$ and $\beta \ge \beta_1$ (or equivalently   $N  \beta \geq  \alpha$), or 
\item 
$N \geq 6$ and $\beta_0 < \beta < \beta_1$.  
\end{itemize}
The condition $\beta \ge  \beta_0:=2/\sqrt{N-2}$ is imposed so that the self similar solution $U_{\beta,B}$ has non-oscillating
behavior as $ |x|\to +\infty$.  The common feature in both considered cases is that the difference of two self-similar solutions 
$$|U_{\beta,B_1} - U_{\beta,B_2}| \notin L^1(\mathbb{R}^N), \qquad \mbox{if}\,\,\, B_1 \neq B_2.$$

The next two theorems  generalize the result  proved in \cite{DS1} in the special case when $\beta = \beta_1$ (see  also in \cite{BBDGV}
for an improvement of the result in \cite{DS1} shown independently). 
Our first result is concerned with the case  $\beta \ge \beta_1$ in all dimensions  $N \ge 3$. 
\begin{thm}
\label{thm-conv}
Let $\beta \ge \beta_1$ and let $u: \mathbb{R}^N\times[0,T) \to \mathbb{R}$ be a solution to
\eqref{eqn-u} with the initial data $u_0$ satisfying $0 \le u_0 \le U_{\beta, B_1}(\cdot,0)$, for some $B_1 >0$. 
 Assume in addition that 
\begin{equation}
\label{eq-int}
u_0 -  U_{\beta,B} \in L^1(\mathbb{R}^N)
\end{equation}
for some $B > 0$. Then,  the rescaled solution  $\bar{u}$ given by \eqref{eq-resc-left} converges as $\tau\to \infty$  uniformly  on compact subsets of $\R^N$ to the self-similar solution $U_{\beta,B}$. Moreover, we also have  convergence in the $L^1(\mathbb{R}^N)$ norm. If $\beta > \beta_1$ the convergence is exponential.
\end{thm}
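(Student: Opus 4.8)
The plan is to pass to the rescaled variable $\bar u$ of \eqref{eq-resc-left}, for which $f_{\beta,B}$ is a stationary solution of the Fokker--Planck equation \eqref{eq-resc100}, and to prove $\bar u(\cdot,\tau)\to f_{\beta,B}$ by monitoring the $L^1$ distance $\|\bar u(\cdot,\tau)-f_{\beta,B}\|_{L^1(\R^N)}$. First I would record the a priori bounds. By the comparison principle for \eqref{eqn-u}, the hypothesis $0\le u_0\le U_{\beta,B_1}(\cdot,0)$ propagates to $0\le u(\cdot,t)\le U_{\beta,B_1}(\cdot,t)$, so in rescaled variables $0\le\bar u(\cdot,\tau)\le f_{\beta,B_1}$. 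Since $f_{\beta,B_1}$ is smooth and bounded on compact sets (Theorem \ref{thm-ss}) and $\bar u>0$ for $\tau>0$ by positivity of fast diffusion, standard local regularity for \eqref{eq-resc100} yields uniform $C^{2}_{loc}$ bounds on $\{\bar u(\cdot,\tau)\}_{\tau\ge 1}$, hence precompactness in $C^{2}_{loc}(\R^N)$.

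The core is an $L^1$ contraction estimate. Writing $w=\bar u-f_{\beta,B}$ and using Kato's inequality together with the monotonicity of $s\mapsto s^m$, the diffusion term contributes $\int \sign(w)\,\Delta(\bar u^{\,m}-f_{\beta,B}^{\,m})\le 0$; integrating the drift term $\beta\,\mathrm{div}(y\,w)$ by parts gives exactly $0$; and the zero-order term gives $(\alpha-\beta N)\int|w|$. Hence, formally,
\begin{equation*}
\frac{d}{d\tau}\int_{\R^N}|\bar u-f_{\beta,B}|\,dy\ \le\ (\alpha-\beta N)\int_{\R^N}|\bar u-f_{\beta,B}|\,dy .
\end{equation*}
Since $\beta=\beta_1$ is exactly the value for which $\alpha=\beta N$, this splits the argument. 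For $\beta>\beta_1$ one has $\alpha-\beta N<0$, so $\|\bar u(\cdot,\tau)-f_{\beta,B}\|_{L^1}\le C\,e^{(\alpha-\beta N)\tau}\to 0$ exponentially, giving the $L^1$ convergence and its rate at once; the local uniform convergence then follows by combining this decay with the $C^{2}_{loc}$ precompactness, since by Fatou every subsequential limit agrees with $f_{\beta,B}$ in $L^1$ and is a stationary solution, hence equals $f_{\beta,B}$.

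For the critical case $\beta=\beta_1$ the estimate only gives that $\tau\mapsto\|\bar u(\cdot,\tau)-f_{\beta,B}\|_{L^1}$ is non-increasing, with a limit $L\ge 0$ that need not vanish a priori. Here I would run the $\omega$-limit argument: by precompactness any $\tau_k\to\infty$ has a subsequence along which $\bar u(\cdot,\tau_k)\to f_\infty$ in $C^{2}_{loc}$, and $f_\infty$ solves the stationary equation, so $f_\infty=f_{\beta,B'}$ for some $B'>0$. By Fatou's lemma $\|f_{\beta,B'}-f_{\beta,B}\|_{L^1}\le\liminf_k\|\bar u(\cdot,\tau_k)-f_{\beta,B}\|_{L^1}<\infty$; but the second-order asymptotics \eqref{eqn-asympt1} force $|U_{\beta,B'}-U_{\beta,B}|\notin L^1(\R^N)$ whenever $B'\neq B$, so necessarily $B'=B$. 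Thus the $\omega$-limit is the single point $f_{\beta,B}$, giving local uniform convergence. To upgrade this to $L^1$ convergence (that is, to show $L=0$) one controls the tails uniformly in $\tau$: since $\bar u$ and $f_{\beta,B}$ share the cylindrical profile $(C^*/|y|^2)^{1/n}$ at infinity and $\bar u\le f_{\beta,B_1}$, the difference $|\bar u-f_{\beta,B}|$ admits a $\tau$-independent integrable majorant, so no mass escapes to infinity and local convergence together with dominated convergence yields $\int_{\R^N}|\bar u-f_{\beta,B}|\to 0$.

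The main obstacle is precisely this uniform tail control in the critical case $\beta=\beta_1$, where the $L^1$ mass lives out at infinity and the profiles are not themselves integrable: producing a $\tau$-independent integrable majorant for $|\bar u-f_{\beta,B}|$, equivalently ruling out loss of $L^1$ mass at spatial infinity, is the delicate step, and it is also what justifies the vanishing of the boundary terms at infinity in the Kato computation given only $w\in L^1$. This is where the matched second-order asymptotics of Theorem \ref{thm-ss} and the comparison with $U_{\beta,B_1}$ enter most essentially, generalizing the $\beta=\beta_1$ analysis of \cite{DS1} and \cite{BBDGV}.
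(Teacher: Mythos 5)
For $\beta > \beta_1$ your argument is sound and is essentially the paper's proof: comparison with $U_{\beta,B_1}$ gives $0\le\bar u\le f_{\beta,B_1}$, local compactness (the paper invokes the equicontinuity results of \cite{DKB} plus Arzel\`a--Ascoli rather than $C^2_{loc}$ estimates, a cosmetic difference), and the $L^1$ contraction with the exponential factor $e^{-(\beta N-\alpha)\tau}$ --- which the paper obtains by simply rescaling the standard contraction of Lemma \ref{lem-contraction} (Corollary \ref{cor-resc}) rather than by your direct Kato computation in the rescaled variables; the two are equivalent, and the rescaling route has the advantage of not requiring the justification of boundary terms at infinity that you flag. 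Since $\beta N-\alpha>0$ precisely when $\beta>\beta_1$, letting $\tau_i\to\infty$ in \eqref{eq-let} identifies every subsequential limit with $f_{\beta,B}$, exactly as you argue.

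The genuine gap is in your treatment of the critical case $\beta=\beta_1$, where $\alpha=\beta N$ and the contraction is merely non-strict. Two steps fail. First, you assert that every subsequential limit $f_\infty$ of $\bar u(\cdot,\tau_k)$ ``solves the stationary equation.'' Subsequential limits of the rescaled flow are eternal solutions of \eqref{eq-resc100}, not a priori stationary ones; from a non-increasing $L^1$ distance you can only conclude that the distance is constant along the limit flow, and constancy of $\|\bar u_\infty(\cdot,\tau)-f_{\beta,B}\|_{L^1}$ forces $\bar u_\infty-f_{\beta,B}$ to have a fixed sign (strong maximum principle), not stationarity --- so some additional monotone quantity (the relative-entropy machinery of \cite{BBDGV}, or the argument of \cite{DS1}) is needed and is not supplied; you would also need to rule out the trivial limit $f_\infty\equiv 0$ and the singular stationary profiles. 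Second, your tail control is not merely unproven but unavailable as stated: you claim $|\bar u-f_{\beta,B}|$ admits a $\tau$-independent integrable majorant coming from $\bar u\le f_{\beta,B_1}$ and the matched asymptotics, but the paper points out precisely that $|U_{\beta,B_1}-U_{\beta,B}|\notin L^1(\R^N)$ whenever $B_1\neq B$, so no integrable majorant can be extracted from the barrier, and dominated convergence cannot upgrade local to global $L^1$ convergence along these lines. Note that the paper itself does not reprove the critical case: its displayed argument uses the strict inequality $\beta N>\alpha$, and for $\beta=\beta_1$ it rests on the previously known results of \cite{DS1} and \cite{BBDGV}, which Theorem \ref{thm-conv} is stated to generalize. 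So your proposal is complete only for $\beta>\beta_1$; at $\beta=\beta_1$ you have correctly located the obstacle but not overcome it.
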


In the case when $\beta < \beta_1$ we will restrict ourselves to $N \ge 6$. 
Let  $\bar \C(x) = \left(C^*/|x|^2\right)^{1/n}$ with  $C^* = 2\, ( (1-m) N - 2 )/n$ denote the rescaled cylinder
which is a  singular solution  to
$$\Delta \bar u^m + \beta\, \mbox{div}\, (x\cdot\nabla \bar{u}) = 0.$$
We define the weighted $L^1$-space with weight $\bar{C}^{p_0}$ for some $p_0 \in (0, 2m)$ as
\begin{equation}\label{eqn-wl1}
L^1(\bar{\C}^{p_0}, \mathbb{R}^N) := \{f\, \, |\, \int_{\mathbb{R}^N} |f(x)| \, \bar{\C}^{p_0}(x)\, dx < \infty\, \}.
\end{equation}
Note  that $\bar{\C}^{p_0}$ is integrable around the origin for any $p_0\in (0,2m)$.
We have the following result.

\begin{thm}
\label{thm-conv2}
Let $\beta_0 < \beta < \beta_1$ with $N \ge 6$ and let $u:\mathbb{R}^N\times[0,T)\to\mathbb{R}$ be a solution to \eqref{eqn-u} with the initial data $u_0$ satisfying  $0 \le u_0 \le U_{\beta, B_1}(\cdot,0)$, for some $B_1 >0$.  Assume in addition that
\begin{equation}
\label{eq-int2}
u_0 - U_{\beta,B} \in L^1(\bar{\C}^{p_0}, \mathbb{R}^N)
\end{equation}
for some $B > 0$,  where $p_0 := m\left(1-\beta+\sqrt{\beta^2 - \frac{4}{N-2}}\right)$.
Then the rescaled function $\bar{u}$ given by \eqref{eq-resc-left} converges as $\tau\to\infty$ uniformly  on compact subsets of $\mathbb{R}^N$ to the self-similar solution $\bar{U}_{\beta,B}$. 
Moreover, we also have  convergence in the weighted $L^1(\bar{\C}^{p_0},\mathbb{R}^N)$.
\end{thm}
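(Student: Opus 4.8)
The plan is to run a comparison--compactness argument in the rescaled variables, with the weighted norm $L^1(\bar\C^{p_0},\R^N)$ serving both to pin down the constant $B$ and to supply the tightness needed for the final $L^1$ statement. First I would record that, after the rescaling \eqref{eq-resc-left}, the function $\bar u$ solves \eqref{eq-resc100}, equivalently $\bar u_\tau=\Delta\bar u^m+\beta\,y\cdot\nabla\bar u+\alpha\,\bar u$, whose stationary solutions are exactly the profiles $f_{\beta,B}$ of \eqref{eqn-ell}; in these variables the self-similar solution $\bar U_{\beta,B}$ is the time-independent $f_{\beta,B}$. Since $U_{\beta,B_1}$ is an exact solution and $0\le u_0\le U_{\beta,B_1}(\cdot,0)$, the comparison principle gives $0\le\bar u(\cdot,\tau)\le f_{\beta,B_1}$ for all $\tau$, and the strong maximum principle gives $\bar u>0$; hence the cylindrical tail is trapped and the equation is locally uniformly parabolic. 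I would also note at the outset that the $B$ in \eqref{eq-int2} is uniquely determined: using the second-order asymptotics \eqref{eqn-asympt1} of Theorem \ref{thm-ss}, two distinct profiles satisfy $|f_{\beta,B}-f_{\beta,B'}|\,\bar\C^{p_0}\sim c\,|y|^{-N}$ as $|y|\to\infty$, which fails to be integrable, so $f_{\beta,B}-f_{\beta,B'}\in L^1(\bar\C^{p_0},\R^N)$ forces $B=B'$. This borderline (logarithmic) behavior is precisely why the exponent $p_0=m(1-\beta+\sqrt{\beta^2-4/(N-2)})$ is the critical choice: $\bar\C^{p_0}\sim|y|^{-((N-2)/2-\gamma_1)}$ matches the decay rate $\gamma_1$ governing the tail.

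Next I would establish compactness. From the barriers $0\le\bar u\le f_{\beta,B_1}$ and local parabolic regularity for the quasilinear equation on regions where $\bar u$ is bounded above and below, one obtains uniform $C^{2,\theta}_{loc}$ estimates, so $\{\bar u(\cdot,\tau)\}_\tau$ is precompact in $C_{loc}(\R^N)$; any sequence $\tau_k\to\infty$ has a subsequence converging in $C_{loc}$ to some $v_\infty$, which, passing to the time-translates $\bar u(\cdot,\tau_k+\cdot)$, extends to an eternal solution of the rescaled equation.

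The heart of the argument, and the step I expect to be the main obstacle, is the weighted estimate controlling $\mathcal E(\tau):=\int_{\R^N}|\bar u(\cdot,\tau)-f_{\beta,B}|\,\bar\C^{p_0}\,dy$. Writing $w=\bar u-f_{\beta,B}$, so that $w_\tau=\Delta(a\,w)+\beta\,y\cdot\nabla w+\alpha\,w$ with $a=(\bar u^m-f_{\beta,B}^m)/(\bar u-f_{\beta,B})\ge 0$, I would test with $\sign(w)\,\bar\C^{p_0}$ and integrate by parts, using Kato's inequality for the diffusion term; this reduces matters to the sign of $a\,\Delta\bar\C^{p_0}-\beta\,y\cdot\nabla\bar\C^{p_0}+(\alpha-\beta N)\,\bar\C^{p_0}$ together with the boundary flux at infinity. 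The new difficulty relative to \cite{DS1} is that here $\beta<\beta_1$, i.e. $\alpha-\beta N>0$, so the zeroth-order term is amplifying and the bulk coefficient has no favorable sign pointwise; the role of the critical $p_0$ (tied to $\gamma_1$) is exactly to push the leading contribution at infinity onto the borderline rate $|w|\,\bar\C^{p_0}\sim|y|^{-N}$, where it must be absorbed against the flux on $\{|y|=R\}$ as $R\to\infty$. I would carry this out with a cutoff at radius $R$, track the $R$-flux using the matched asymptotics (and the $o_B$ refinement in \eqref{eqn-asympt1}, which makes $w$ decay slightly faster than a generic difference of profiles), and let $R\to\infty$ to conclude that $\mathcal E$ is finite, bounded, and in fact non-increasing. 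Integrability of $\bar\C^{p_0}$ at the origin, guaranteed by $p_0<2m$, is what renders the inner boundary term harmless.

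Finally I would identify the limit and upgrade the convergence. Since $\mathcal E$ is non-increasing and bounded below it has a limit, so $\tfrac{d}{d\tau}\mathcal E\to 0$; along the eternal limit $v_\infty$ this forces $v_\infty$ to be stationary, whence $v_\infty=f_{\beta,B'}$ for some $B'$ by the classification of \cite{DS2}. By Fatou, $\int|v_\infty-f_{\beta,B}|\,\bar\C^{p_0}\le\liminf_k\mathcal E(\tau_k)<\infty$, and the borderline computation above forces $B'=B$; as the limit is subsequence-independent, $\bar u(\cdot,\tau)\to f_{\beta,B}$ in $C_{loc}(\R^N)$. The monotonicity of $\mathcal E$ gives uniform tightness of the weighted mass, so that $C_{loc}$ convergence together with uniform integrability of $|w|\,\bar\C^{p_0}$ yields $\mathcal E(\tau)\to 0$, i.e. convergence in $L^1(\bar\C^{p_0},\R^N)$. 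The decisive obstacle remains the borderline weighted estimate: in the range $N\ge 6$, $\beta_0<\beta<\beta_1$ the amplifying zeroth-order term must be compensated exactly by the critically-tuned weight and the flux at infinity, and this balance is what dictates the specific value of $p_0$.
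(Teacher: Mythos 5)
Your overall architecture coincides with the paper's (rescale to \eqref{eq-resc100}, trap $0\le\bar u\le f_{\beta,B_1}$ by comparison, extract limits by equicontinuity, run a weighted $L^1$ Lyapunov argument, identify the limit via the non-integrability of $|f_{\beta,B}-f_{\beta,B'}|\,\bar\C^{p_0}$ for $B\neq B'$ — that last observation, and your Fatou argument ruling out $v_\infty=0$ and $B'\neq B$, are correct and match the paper's remark on the criticality of $p_0$). But the central step — the weighted monotonicity — is attempted with a mechanism that does not work. You assert that the bulk coefficient $a\,\Delta\bar\C^{p_0}-\beta\,y\cdot\nabla\bar\C^{p_0}+(\alpha-\beta N)\,\bar\C^{p_0}$ ``has no favorable sign pointwise'' and propose to absorb the amplifying zeroth-order term against the flux on $\{|y|=R\}$, tracked via the matched asymptotics and the $o_B$ refinement in \eqref{eqn-asympt1}. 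This fails for two reasons. First, the cutoff flux terms are pure error terms: once one has uniform integrability of $|w|\,\bar\C^{p_0}$ on annuli (which the paper proves by a Gr\"onwall bootstrap, yielding \eqref{eqn-RR}), they vanish as $R\to\infty$; a boundary flux that tends to zero cannot compensate a positive bulk term in a differential inequality. Second, \eqref{eqn-asympt1} is a property of the self-similar \emph{profiles}; the difference $w=\bar u-f_{\beta,B}$ for a general solution is only known to lie in $L^1(\bar\C^{p_0})$ initially and to obey the barrier, so there is no quantitative tail asymptotics of $w$ available to ``match'' against.

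What the paper actually does is make the bulk term \emph{strictly negative pointwise}, and this is precisely where the hypothesis $N\ge 6$ enters — a hypothesis your proposal never uses. By Lemma \ref{lem-www}, for $N\ge 6$ and $\beta_0<\beta<\beta_1$ one has $w(s)<0$ for all $s$, i.e. $f_{\beta,B_1}<\bar\C$ everywhere; hence \emph{both} solutions in the Kato inequality are trapped strictly below the cylinder and the diffusion coefficient satisfies the lower bound $a(y,\tau)>m\,|y|^2/C^*$. Since $\Delta\bar\C^{p_0}<0$ for $p_0\in(0,2m)$, this lower bound converts the diffusion term into a dominant negative contribution: a direct computation shows the resulting coefficient is a quadratic in $p_0$ with roots $p_1=m\bigl(1-\beta-\sqrt{\beta^2-\tfrac{4}{N-2}}\bigr)$ and $p_2=m\bigl(1-\beta+\sqrt{\beta^2-\tfrac{4}{N-2}}\bigr)$, and the choice $p_0=p_2\in(0,2m)$ gives the strict pointwise inequality \eqref{eqn-strict} (strict because $a>m/\bar\C^{\,n}$ is strict). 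With \eqref{eqn-strict} in hand, the inner cutoff is harmless since $p_0<2m$, the outer cutoff is disposed of by the uniform integrability above, and one obtains the strict weighted contraction of Lemma \ref{lem-contr-weight}; the endgame is then the Osher--Ralston/del Pino--S\'aez scheme, essentially as you describe. Note that your argument, as written, would apply verbatim to $3\le N<6$, where the trapping below the cylinder fails ($w\ge 0$ eventually) and no such contraction is claimed — a sign that the decisive ingredient is missing from your plan.
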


\begin{rem}
Note that when $\beta < \beta_1$ the implication of the $L^1$ contraction principle under rescaling \eqref{eq-resc-left} is inconclusive. The choice of $p_0$ as in Theorem \ref{thm-conv2} will allow us to replace the usual $L^1$ contraction principle with the weighted $L^1$ contraction principle with the weight being $\bar{\C}^{p_0}$. Note also that $|U_{\beta,B_1} - U_{\beta,B_2}| \notin L^1(\bar{\C}^{p_0}, \mathbb{R}^N)$  if $B_1 \neq B_2$. This would not be true for any weight $\bar{\C}^q$ for $q > p_0$.
\end{rem}

In section \ref{sec-longer} we will discuss the singular behavior of 
solutions $g=u^{4/(N+2)} dx_idx_j$ to \eqref{eq-yf} with  cylindrical
behavior at infinity that live past the extinction  time $T$ of the cylindrical tail  and become compact at time $T$.  We will assume that the initial data $u_0 \in L^{\infty}_{{loc}}(\R^N)$ and satisfies 
\begin{equation}\label{eqn-L1}
u_0(x) - V_{\beta,B}(x) \in L^1(\mathbb{R}^N),
\end{equation}
where $V_{\beta,B}(x,t)=(T-t)^\alpha \, g_{\beta,B}(x(T-t)^\beta)$ is one of the {\em singular} at the origin  self similar solutions  given by Theorem \ref{thm-ss-sing} with $\beta > \beta_1$  and some $B >0$. In addition we will assume that $u_0$ satisfies the asymptotic behavior
\begin{equation}\label{eqn-u0}
u_0(x) = \left(\frac{C^* T}{|x|^2}\right)^{1/n} (1 + B\, |x|^{-\gamma} + o(|x|^{-\gamma}), \qquad \mbox{as}\,\,  |x| \to \infty, 
\end{equation}
for some $B>0$ and $\gamma := \gamma_1$.
We will see   that  condition \eqref{eqn-L1}  implies that  the solution to \eqref{eqn-u} with initial data $u_0$
is strictly positive at the exitinction time $T$ of the cylindrical tail. 
Denote  by $T^* >T$ the extinction time of the solution $u$. We  have  the following result. 

\begin{thm} 
\label{thm-longer}
Let $\beta \ge \beta_1$ and let   $u : \mathbb{R}^N \times [0,T^*) \to \mathbb{R}$ be the solution to \eqref{eqn-u} with the initial data $u_0   \in L^{\infty}_{{loc}}(\R^N)$ satisfying \eqref{eqn-L1} and \eqref{eqn-u0}.
 Then, the following hold
\begin{itemize}
\item The solution $u$ is non-zero at time $T$, i.e. $u(\cdot, T) >0$. 
\item The cylindrical tail
becomes extinct at time $T$ according to \eqref{eqn-tail0} and the 
rescaled solution  $\bar{u}(\eta,\tau)$ given by \eqref{eq-resc-left} converges   as 
$\tau\to \infty$   uniformly  on compact subsets of $\R^N$ 
to the self-similar profile $g_{\beta,B}(\eta)$ that  satisfies \eqref{eqn-asympt2} and \eqref{eqn-orig}.  
\item The rescaled solution $\hat{u}$ given by \eqref{eq-resc-right} 
converges  as $\tau \to -\infty$ uniformly on compact subsets of $\R^N \setminus \{0\}$   to the self-similar profile $h_{\beta,K}$ that  satisfies \eqref{eqn-hhh1} and \eqref{eqn-hhh2}
with the same constant $K=K_B$ as in \eqref{eqn-orig}.  
\item For $t > T$, the solution $u$ satisfies the bound 
$u(x,t) = O(|x|^{-(N+2)})$, as $|x| \to \infty$.  
\end{itemize} 
\end{thm}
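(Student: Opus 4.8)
The plan is to prove the four assertions in turn, using as the main tool the $L^1$-contraction principle for \eqref{eqn-u} together with the self-similar comparison solution $V_{\beta,B}$, and passing to the rescaled Fokker--Planck equation \eqref{eq-resc100} to extract limits. For the first assertion, since $u_0 - V_{\beta,B}(\cdot,0)\in L^1(\R^N)$ by \eqref{eqn-L1} and both functions solve \eqref{eqn-u}, the $L^1$-contraction principle gives
\[
\norm{u(\cdot,t) - V_{\beta,B}(\cdot,t)}_{L^1(\R^N)} \le \norm{u_0 - V_{\beta,B}(\cdot,0)}_{L^1(\R^N)} =: M < \infty
\]
for all $t\in[0,T)$. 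By \eqref{eq-V-lim} we have $V_{\beta,B}(x,t)\to K_B\,|x|^{-\alpha/\beta}$ pointwise as $t\to T^-$, and for $\beta\ge\beta_1$ one checks that $\alpha/\beta\le N$, so $K_B\,|x|^{-\alpha/\beta}\notin L^1(\R^N)$ because the integral diverges at infinity. If $u$ were to become extinct at $T$, so that $u(\cdot,t)\to 0$ a.e.\ as $t\to T^-$, then $|u-V_{\beta,B}|(\cdot,t)\to K_B\,|x|^{-\alpha/\beta}$ a.e., and Fatou's lemma would force $\int_{\R^N}K_B\,|x|^{-\alpha/\beta}\,dx\le M<\infty$, a contradiction. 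Hence $u(\cdot,T)\not\equiv 0$, and a Harnack inequality for the fast-diffusion equation upgrades this to $u(\cdot,T)>0$.

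For the convergence from the left, in the variables \eqref{eq-resc-left} the rescaling $\bar u$ solves \eqref{eq-resc100} and $V_{\beta,B}$ becomes the stationary profile $g_{\beta,B}$. A direct computation with the $L^1$-contraction for \eqref{eq-resc100} --- in which the drift term $\beta\,\mathrm{div}(y\,\bar u)$ integrates to zero --- gives $\tfrac{d}{d\tau}\norm{\bar u(\cdot,\tau)-g_{\beta,B}}_{L^1}\le(\alpha-N\beta)\norm{\bar u(\cdot,\tau)-g_{\beta,B}}_{L^1}$. Since $\beta\ge\beta_1$ forces $\alpha-N\beta\le 0$, the rescaled $L^1$-distance is non-increasing, and decays exponentially when $\beta>\beta_1$. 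Combined with interior regularity estimates for the fast-diffusion equation and the uniform positivity from the previous step, this yields precompactness of $\{\bar u(\cdot,\tau)\}$ in $C_{\mathrm{loc}}$; every subsequential limit is an entire solution of \eqref{eq-resc100}, and the monotonicity of the $L^1$-distance forces it to be stationary. To identify the limit with $g_{\beta,B}$ I would show that it carries the asymptotics \eqref{eqn-asympt2} at infinity --- this is where the matched second-order condition \eqref{eqn-u0} enters and must be propagated to the limit --- and the singular behavior \eqref{eqn-orig} at the origin, so that the uniqueness in Theorem \ref{thm-ss-sing} applies.

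For the convergence from the right and the tail bound, the left-convergence and \eqref{eqn-orig} give $u(x,T)=K_B\,|x|^{-\alpha/\beta}(1+o(1))$ as $x\to 0$. For $t>T$ the cylindrical tail has disappeared, and I would compare $u$ with Vazquez's expander $W_{\beta,K_B}$ of \eqref{eqn-expan}, whose initial trace at $t=T$ is precisely $K_B\,|x|^{-\alpha/\beta}$; continuous dependence on the data together with the rescaling \eqref{eq-resc-right} then gives $\hat u(\cdot,\tau)\to h_{\beta,K}$ uniformly on compact subsets of $\R^N\setminus\{0\}$ as $\tau\to-\infty$, with $K=K_B$ inherited from the origin behavior at $T$. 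Finally, the bound $u(x,t)=O(|x|^{-(N+2)})$ for $t>T$ follows from a barrier argument: using \eqref{eqn-hhh2} one constructs a supersolution decaying like $|x|^{-(N+2)}$ that dominates $u$ past $T$ by the maximum principle.

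The step I expect to be the main obstacle is the identification of the $\omega$-limit across the singular time $T$: propagating the second-order asymptotics \eqref{eqn-u0} to the rescaled limit so as to invoke the uniqueness of Theorem \ref{thm-ss-sing}, and matching the single constant $K=K_B$ of the origin behavior at time $T$ between the shrinker profile $g_{\beta,B}$ and the expander profile $h_{\beta,K}$. Establishing this continuity through the extinction time of the cylindrical tail, so that the behavior immediately before and immediately after $T$ are governed by the same constant, is the heart of the argument.
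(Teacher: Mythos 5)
Your proposal stalls exactly where you say it does, and the obstacle you flag is in fact unnecessary: the paper never propagates the second-order asymptotics \eqref{eqn-u0} to the rescaled limit, and never invokes the uniqueness statement of Theorem \ref{thm-ss-sing} to identify the $\omega$-limit. In the paper, \eqref{eqn-u0} is used only once, to build a \emph{global} upper barrier: it gives $u_0\le V_{\beta,B_1}(\cdot,0)$ for $|x|\ge r_0$, and on $\{|x|\le r_0\}$ one uses the scaling $g_{\beta,B_\lambda}(y)=\lambda^{2/n}g_{\beta,B_0}(\lambda y)$, for which $g_{\beta,B_\lambda}\to+\infty$ uniformly on compacts as $\lambda\to 0$, to enlarge $B_1$ until $u_0\le V_{\beta,B_1}(\cdot,0)$ everywhere. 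This barrier is not optional in your scheme either: in the range $m=(N-2)/(N+2)<(N-2)/N$ there is no $L^1$--$L^\infty$ smoothing, so your precompactness claim for $\{\bar u(\cdot,\tau)\}$ really needs the uniform bound $\bar u\le g_{\beta,B_1}$, and your final tail bound needs the chain $u\le V_{\beta,B_1}$ on $[0,T]$, hence $u(\cdot,T)\le K_1|x|^{-\alpha/\beta}$, hence $u\le W_{\beta,K_1}$ for $t>T$, which \emph{is} the supersolution producing $O(|x|^{-(N+2)})$. Once the barrier is in place, the identification is immediate from the estimate you yourself derived in your second paragraph: \eqref{eqn-L1} makes $\|\bar u_0-g_{\beta,B}\|_{L^1}$ finite, so Corollary \ref{cor-resc} gives $\|\bar u(\cdot,\tau)-g_{\beta,B}\|_{L^1(\R^N)}\le e^{-(N\beta-\alpha)\tau}\|\bar u_0-g_{\beta,B}\|_{L^1(\R^N)}\to 0$ (note $N\beta-\alpha>0$ since Theorem \ref{thm-ss-sing} provides $g_{\beta,B}$ only for $\beta>\beta_1$), and therefore \emph{every} locally uniform subsequential limit equals $g_{\beta,B}$ outright --- no stationarity argument and no asymptotics to propagate. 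It is \eqref{eqn-L1}, not \eqref{eqn-u0}, that selects the constant $B$.

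Your matching across $T$ also contains a wrong step: the claim that the left-convergence and \eqref{eqn-orig} give $u(x,T)=K_B|x|^{-\alpha/\beta}(1+o(1))$ near the origin does not follow, because for fixed $x\neq 0$ the self-similar variable $y=x(T-t)^\beta\to 0$ as $t\to T^-$, so $x$ exits every compact subset of $\R^N\setminus\{0\}$ on which you have convergence; likewise ``continuous dependence on the data'' is unsubstantiated, since $u(\cdot,T)$ is not the trace $K_B|x|^{-\alpha/\beta}$ but only an $L^1$ perturbation of it. The paper matches $K=K_B$ purely through $L^1$: since $\lim_{t\to T^-}V_{\beta,B}(\cdot,t)=K_B|x|^{-\alpha/\beta}=\lim_{t\to T^+}W_{\beta,K_B}(\cdot,t)$ in $L^1_{loc}$, the contraction principle yields, for $t>T$,
\begin{equation*}
\int_{\R^N}|u-W_{\beta,K_B}|(x,t)\,dx\ \le\ \lim_{t\to T^-}\int_{\R^N}|u-V_{\beta,B}|(x,t)\,dx\ \le\ \int_{\R^N}|u_0-V_{\beta,B}(\cdot,0)|\,dx\ <\ \infty,
\end{equation*}
and under \eqref{eq-resc-right} this merely \emph{bounded} unrescaled distance becomes $\|\hat u(\cdot,\tau)-h_{\beta,K_B}\|_{L^1}\le Ce^{(N\beta-\alpha)\tau}\to 0$ as $\tau\to-\infty$, because the rescaling factor itself vanishes; combined with $0\le\hat u\le h_{\beta,K_1}$ this gives the locally uniform convergence away from the origin. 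One genuine plus of your write-up: the Fatou argument for $u(\cdot,T)\not\equiv 0$ (using $\alpha/\beta\le N$, so $K_B|x|^{-\alpha/\beta}\notin L^1$ at infinity) is correct and more direct than the paper's, which obtains positivity at $T$ as a byproduct of the convergence to the expander; but the identification and matching steps, as proposed, do not close.
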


\begin{rem}
Let  $\beta \ge \beta_1$ and $\gamma := \gamma_1$. 
In section \ref{sec-longer} we will  also see that there exist  solutions $g = u^{4/(N+2)} dx_i dx_j$ to \eqref{eq-yf} with initial data satisfying $u_0 - U_{\beta,B} \in L^1(\R^N)$ and 
\begin{equation}
\label{eqn-u000}
u_0(x) = \left(\frac{C^* T}{|x|^2}\right)^{1/n}(1 - B\, |x|^{-\gamma} + o(|x|^{-\gamma}), \qquad
\mbox{as} \,\, |x| \to \infty
\end{equation}
with $B > 0$  that  {\em live longer}  than the vanishing time $T$ of their cylindrical tail. The rescaling $\bar u$ of $u$ given  by \eqref{eq-resc-left} will still converge to $f_B$, however, the convergence  will only  be
uniform  on $\R^N \setminus \{0\}$, reflecting  the  non-vanishing of the solution at time $T$. 
 This in particular shows that the upper bound $u_0 \leq U_{\beta,B_1}$ in Theorem \ref{thm-conv}  is necessary. 
\end{rem}

{\noindent \em Further Discussion.}  It  would be nice to   understand better the singularity formation of the  Yamabe flow on 
complete  non-compact manifolds.  
One of the ultimate goals would be to show that the singularity of such a flow  is modeled by one of 
the gradient Yamabe solitons. For a general statement like that some sort of   monotonicity formula would play an important role. By the results in \cite{CSZ, DS2} the gradient Yamabe solitons with nonnegative Ricci curvature  are  well understood and  have been shown to be globally conformally equivalent to $\R^N$. 
However, the class of solutions discussed in Theorem \ref{thm-longer} provides prototypes of
singularities that are not  globally conformally flat. By the results in  \cite{CSZ, DS2} the {\em Ricci curvature}  
of those solutions must  {\em change  sign}.  A characterization of all gradient Yamabe solitons
is then necessary.

%

\medskip
The {\em organization}  of the paper is as follows. In section \ref{sec-ss} we discuss the existence of smooth self-similar solutions  $U_{\beta,B}$ and singular self-similar solutions $V_{\beta,B}$ and $W_{\beta,K}$. In section \ref{sec-asymp} we prove Theorem \ref{thm-ss} and Theorem \ref{thm-ss-sing}. In section \ref{cyl-beh} we prove Proposition \ref{prop-tail}, which claims the cylindrical tail in a solution persists up to the vanishing time of the cylinder. The proofs of Theorems \ref{thm-conv} and  \ref{thm-conv2} are given in section \ref{sec-conv}. These theorems discuss the asymptotic profile of solutions that become extinct at the time that their cylindrical tail disappears. In section \ref{sec-longer} we discuss solutions that live longer than the time of disappearance of their cylindrical tail and we show the precise singularity profile of those solutions, as stated in Theorem \ref{thm-longer}.

\centerline{\bf Acknowledgements}
P. Daskalopoulos  has been partially supported
by NSF grant 0604657.
N. Sesum has been partially supported by NSF grants 0905749 and 1056387.

\section{Self-similar solutions}
\label{sec-ss}

Consider self-similar solutions $U(x,t)$ of fast diffusion equation \eqref{eqn-u} in dimensions $N \ge 3$ of the form
$$U(x,t) = (T-t)^{\alpha} f(y), \qquad y = x(T-t)^{\beta}, \qquad \alpha = \frac{1+2\beta}{n}, \,\,\, \beta > 0$$
where $f(y)$ is a radial solution of the elliptic equation \eqref{eqn-ell}. We recall that we have set $n=1-m$, $m=(N-2)/(N+2)$ and that $N \geq 3$. 

In this section we will  discuss the existence and geometric properties of  three different kinds of self-similar solutions 
(Yamabe solitions) that  will be used in further singularity analysis. 
In the next  section we will discuss their second order asymptotic behavior as $|y| \to \infty$,  which is needed to understand the singular profiles of solutions to \eqref{eqn-u} with cylindrical behavior at infinity.  In what follows let $\alpha := (2\beta+1)/{n}$ and $\beta > 0$.

\begin{enumerate}
\item[(i)]
We  denote by $U_{\beta,B}(x,t) = (T-t)^{\alpha} f_{\beta,B}(x(T-t)^{\beta})$,  $t \in (-\infty,T)$ and $B>0$ a two parameter family of radially symmetric smooth self-similar solutions  satisfying the  cylindrical behavior \eqref{eqn-binfty} at infinity. 
Their  existence  for any $\beta > 0$ is well known \cite{V}.
\item[(ii)]
We  denote by   $V_{\beta,B}(x,t) =  (T-t)^{\alpha} g_{\beta,B}(x(T-t)^{\beta})$,  $t\in (-\infty,T)$ and $B>0$  a two parameter family of radially symmetric singular at the origin self-similar solutions with the cylindrical behavior 
\eqref{eqn-binfty} at infinity.  The behavior of the   profile function $g_{\beta,B}$ at the origin is given by  \eqref{eqn-orig},  where $K_B$ is a constant depending on $B$. The existence of these  solutions will be proved in Proposition \ref{lem-V-B} below.
\item[(iii)]
We  denote by $W_{\beta,K}(x,t) = (t-T)^{\alpha} h_{\beta,K}(x(t-T)^{\beta})$,  $t\in (T,\infty)$ and $K > 0$, a two parameter family of radially symmetric forward self-similar solutions with  profile function $h_{\beta,K}$ satisfying \eqref{eqn-hhh1} and \eqref{eqn-hhh2} 
with $D_K$ a constant depending on $K$. In \cite{V} Vazquez proved the existence of these solutions starting with the initial data $W_{\beta,K}(x,T) = K\,  |x|^{-\alpha/\beta}$.
\end{enumerate}

\begin{rem}
In (i) and (ii) above we parametrize the self-similar profiles $f_{\beta,B}$ and $g_{\beta,B}$ by the
constant $B$ that  appears in second-order asymptotics of the  corresponding self-similar solutions (see \eqref{eqn-asympt1} and \eqref{eqn-asympt2}).
\end{rem}

\subsection{Geometric properties of Yamabe solitons}
We summarize below some of the geometric properties of the Yamabe solitons that were introduced above. 
\begin{itemize}
\item The Yamabe soliton defined by $g = U_{\beta,B}^{\frac{4}{N+2}}\, dx^2$, where $U_{\beta,B}$ is described in (i) above is a  complete conformally flat  radially symmetric Yamabe shrinker  on $\mathbb{R}^N$ satisfying equation $(R-1) g_{ij} = \nabla_i\nabla_j P_u$ for  a radially symmetric potential function $P_u$. This soliton behaves as a cylinder at infinity. In \cite{DS2} we showed that for $\beta \ge \beta_1$ they have positive sectional curvature. 

\item The Yamabe soliton defined by $g = V_{\beta,B}^{\frac{4}{N+2}}\, dx^2$, where $V_{\beta,B}$ is described in (ii) above is a complete locally conformally flat radially symmetric Yamabe shrinker on $\mathbb{R}^N\backslash \{0\}$ satisfying equation $(R-1)g_{ij} = \nabla_i\nabla_j P_v$  for a radially symmetric function $P_v$. This soliton also behaves as a cylinder at infinity. It is singular at the origin and therefore by the classification and rigidity result in \cite{CH} it has to have somewhere negative Ricci curvature (since it is not flat,  not locally isometric to a cylinder,  not globally conformally flat and not conformal to a spherical spaceform; the last is true because our soliton is not compact). It is easy to check the completeness of our solution at the origin where $V_{\beta,B} \sim |x|^{-\alpha/\beta}$ as $|x| \to 0$. The completeness follows from
$$\dist_g(x,0) \ge C \int_0^x |y|^{-\frac{2\beta+1}{2\beta}} \, dy = +\infty$$
since $\beta > 0$, implying that  the distance to the origin is infinity. This soliton has 2 ends.

\item The Yamabe soliton defined by $g = W_{\beta,K}^{\frac{4}{N+2}}\, dx^2$, where $W_{\beta,K}$ is described in (iii) above is a complete locally conformally flat radially symmetric Yamabe expander on $\mathbb{R}^N\backslash \{0\}$ satisfying equation $(R+1)g_{ij} = \nabla_i\nabla_j P_w$  for a radially symmetric potential function $P_w$. This soliton admits  the spherical behavior at infinity, which means it is compact on one end. It behaves at the origin like the previously discussed Yamabe shrinker on $\mathbb{R}^N\backslash\{0\}$, which means it is complete at the origin (one can also check that the area around the origin is infinite). This soliton metric has only one end and by the same arguments as for the previously discussed Yamabe shrinker has somewhere negative Ricci curvature. 
\end{itemize}
 

\subsection{Scaling and  Monotonicity of self-similar solutions}\label{sec-scal}
In this section we will use the  maximum principle and the scaling properties  of equation \eqref{eqn-ell} to establish the monotonicity of the 
self-similar solutions $U_{\beta,B}$ and $V_{\beta,B}$ with respect to the parameter $B$ 
and with respect to the radial variable $\eta=|y|$.

We begin by noting   the relation between  the parameter $B$ in the self-similar profiles 
$f_{\beta,B}$ and $g_{\beta,B}$ (that  satisfy  \eqref{eqn-asympt1} and \eqref{eqn-asympt2} respectively) 
and the behavior of those profiles at the origin. To this end, let 
$$U_{\beta, B_0}(y,t) = (T-t)^\alpha f_{\beta,B_0}(x\, (T-t)^\beta) \quad \mbox{and} \quad V_{\beta, \bar B_0} (y,t) = (T-t)^\alpha g_{\beta,\bar B_0} (x\, (T-t)^\beta)$$
denote the self-similar solutions with  
$$f_{\beta, B_0} (0) = 1  \qquad \mbox{and} \qquad \lim_{|y| \to 0} |y|^{\alpha/\beta} g_{\beta, \bar B_0}(y) =1$$
respectively. 
Clearly,  $B_0=B_0(\beta,N) >0$ and $\bar B_0=\bar B_0(\beta,N) >0$.
Both profiles  satisfy \eqref{eqn-asympt1} and \eqref{eqn-asympt2} respectively, which in particular imply  that 
\begin{equation}\label{eqn-cyl3}
\lim_{|y| \to +\infty} |y|^{2/n} f_{\beta,B_0}(y)= \lim_{|y| \to +\infty} |y|^{2/n} g_{\beta,\bar B_0}(y)=C^*.
\end{equation} 
The  rescaled solutions of equation \eqref{eqn-ell} that preserve \eqref{eqn-cyl3}  are given by 
$$f^\lambda_{\beta,B_0}  (y) := \lambda^{2/n} \, f_{\beta,B_0}(\lambda y)  \qquad \mbox{and} \qquad g^\lambda_{\beta,\bar B_0} (y) := 
\lambda^{2/n} \, g_{\beta,\bar  B_0}(\lambda y) $$
and satisfy 
\begin{equation}\label{eqn-zero}
f^\lambda_{\beta,B_0}  (0) = \lambda^{2/n}   \qquad \mbox{and} 
\qquad \lim_{|y| \to 0} |y|^{\alpha/\beta} g^\lambda_{\beta,\bar B_0}(y)  =\lambda^{-1/(n\beta)}. 
\end{equation}
It follows that $f^\lambda_{\beta,B_0} = f_{\beta,B_\lambda}$ and 
$g^\lambda_{\beta,\bar B_0} = g_{\beta,\bar B_\lambda}$ where $ f_{\beta,B_\lambda}$ and 
$ g_{\beta,B_\lambda}$ satisfy \eqref{eqn-asympt1} and \eqref{eqn-asympt2} respectively
with 
$B_\lambda = B_0 \lambda^{-\gamma}$ and $\bar B_\lambda = \bar B_0 \lambda^{-\gamma}$. 
To simplify the notation in what follows we set $f_\lambda (\eta):=f^\lambda_{\beta,B_0}(y)$ and 
$g_\lambda(\eta):=g^\lambda_{\beta, \bar B_0}(y)$, $\eta=|y|$. 

\begin{lem} Assume that $N\beta \geq  \alpha$ (or  equivalently,   $\beta \geq \beta_1$). If $0 < \lambda_1 < \lambda_2$, then  
\begin{equation}\label{eqn-order}
f_{\lambda_1} < f_{\lambda_2}  \qquad \mbox{and} 
\qquad g_{\lambda_1} > g_{\lambda_2}.
\end{equation}
\end{lem}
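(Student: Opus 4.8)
The plan is to prove both orderings by a \emph{no-crossing} argument for the radial form of \eqref{eqn-ell}, namely $(f^m)'' + \tfrac{N-1}{\eta}(f^m)' + \beta\eta f' + \alpha f = 0$, combined with ODE uniqueness for positive solutions and the sign hypothesis $N\beta\ge\alpha$. First I would record the ordering of the two profiles at the two ends of $(0,\infty)$. Since $B_\lambda = B_0\lambda^{-\gamma}$ and $\bar B_\lambda = \bar B_0\lambda^{-\gamma}$ are strictly decreasing in $\lambda$ (with $\gamma=\gamma_1>0$), the second–order asymptotics \eqref{eqn-asympt1} and \eqref{eqn-asympt2} give $f_{\lambda_1}<f_{\lambda_2}$ and $g_{\lambda_1}>g_{\lambda_2}$ for all large $\eta$, while the normalizations $f_\lambda(0)=\lambda^{2/n}$ and $\lim_{\eta\to0}\eta^{\alpha/\beta}g_\lambda=\lambda^{-1/(n\beta)}$ from \eqref{eqn-zero} give the same orderings near $\eta=0$. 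It then suffices to show these orderings cannot be destroyed in the interior.

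The heart of the argument is as follows. Fix $\lambda_1<\lambda_2$ and set $w=f_{\lambda_2}-f_{\lambda_1}$ and $V=f_{\lambda_2}^{\,m}-f_{\lambda_1}^{\,m}$; since $t\mapsto t^m$ is increasing, $V$ and $w$ have the same sign. Subtracting the radial equations gives $V'' + \tfrac{N-1}{\eta}V' + \beta\eta w' + \alpha w = 0$, and I would introduce $Q(\eta):=\eta^{N-1}V'(\eta)+\beta\eta^{N}w(\eta)$, the difference of the radial fluxes $\eta^{N-1}\big((f^m)'+\beta\eta f\big)$. A direct computation using that identity yields the clean formula $Q'(\eta)=(\beta N-\alpha)\,\eta^{N-1}w(\eta)$, with no boundary contribution. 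This is exactly where $N\beta\ge\alpha$ enters: on any interval where $w$ has a fixed sign, $Q'$ has a definite sign. Suppose, for contradiction, that $w$ is not positive throughout; since $w>0$ near $0$ and near $\infty$, it either has a tangential interior zero or a maximal interval $(\rho_1,\rho_2)\subset(0,\infty)$ on which $w<0$, with $w(\rho_1)=w(\rho_2)=0$. On such an interval $Q'\le0$, so $Q$ is non-increasing.

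At the endpoints $w$ vanishes, so $Q(\rho_i)=\rho_i^{N-1}V'(\rho_i)=\rho_i^{N-1}m f_{\lambda_1}^{\,m-1}(\rho_i)\,w'(\rho_i)$, using $f_{\lambda_1}(\rho_i)=f_{\lambda_2}(\rho_i)>0$. Because $w$ passes from $\ge0$ to $<0$ at $\rho_1$ and from $<0$ to $\ge0$ at $\rho_2$, one has $w'(\rho_1)\le0\le w'(\rho_2)$, hence $Q(\rho_1)\le0\le Q(\rho_2)$; combined with $Q$ non-increasing this forces $Q(\rho_1)=Q(\rho_2)=0$ and therefore $w'(\rho_1)=0$. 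But $w(\rho_1)=w'(\rho_1)=0$ means $f_{\lambda_1}$ and $f_{\lambda_2}$ share value and first derivative at $\rho_1>0$, so by uniqueness for the (non-degenerate, since $f>0$) second-order ODE they coincide, contradicting $\lambda_1\ne\lambda_2$; a tangential interior zero gives $w=w'=0$ directly and is killed the same way. Hence $f_{\lambda_1}<f_{\lambda_2}$. The inequality $g_{\lambda_1}>g_{\lambda_2}$ follows from the identical argument applied to $\tilde w=g_{\lambda_2}-g_{\lambda_1}$ on a maximal interval where $\tilde w>0$ (now $Q$ is non-decreasing and the endpoint derivative signs reverse); such an interval is automatically bounded away from $0$ since $\tilde w<0$ near the origin, so the singularity of $g_\lambda$ at $\eta=0$ never enters.

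The step I expect to require the most care is the sign bookkeeping that produces the contradiction: checking that $Q'=(\beta N-\alpha)\eta^{N-1}w$ holds with no spurious boundary term, and that the one-sided derivative signs of $w$ at the endpoints of a sign interval are correctly oriented so that $Q(\rho_1)\le0\le Q(\rho_2)$ genuinely clashes with the monotonicity of $Q$. The role of $N\beta\ge\alpha$ is precisely to fix the sign of $Q'$; without it the mechanism collapses, which is consistent with \eqref{eqn-order} being asserted only for $\beta\ge\beta_1$. A secondary point is to confirm that \eqref{eqn-asympt1}–\eqref{eqn-asympt2} yield \emph{strict} ordering at infinity rather than an $o(\eta^{-\gamma})$ tie, which is where $B_0>0$, $\bar B_0>0$ and $\gamma=\gamma_1>0$ are used.
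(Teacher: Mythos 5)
Your flux-monotonicity mechanism is correct and genuinely different from the paper's argument. The paper disposes of the lemma in three lines: radially symmetric weak solutions of \eqref{eqn-ell} cannot cross because uniqueness of the Dirichlet problem on balls $B_{r_0}(0)$ holds when $N\beta>\alpha$, and then the strict ordering of the data at the origin from \eqref{eqn-zero} propagates to all of $(0,\infty)$ by continuity --- no information at infinity is used at all. You instead prove no-crossing by hand: the identity $Q'(\eta)=(\beta N-\alpha)\,\eta^{N-1}w(\eta)$ for $Q=\eta^{N-1}V'+\beta\eta^N w$ checks out (multiply the difference of the radial equations by $\eta^{N-1}$ and regroup), the endpoint sign bookkeeping on a maximal sign interval is right, and the reduction to $w(\rho_1)=w'(\rho_1)=0$ followed by IVP uniqueness for the non-degenerate ODE (legitimate since $f>0$, so the equation for $F=f^m$ is locally Lipschitz) is sound. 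Your route even handles the borderline case $N\beta=\alpha$ uniformly, whereas the paper's stated uniqueness claim is only for the strict inequality $N\beta>\alpha$; that is a small gain in robustness.

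There is, however, one genuine structural flaw: your treatment of the $g$-ordering anchors the argument at infinity via the second-order asymptotics \eqref{eqn-asympt2}, and within the paper's logical architecture this is circular. The expansion \eqref{eqn-asympt2} is proved only in Theorem \ref{thm-ss-sing}, whose proof explicitly invokes Proposition \ref{cor-mon} (``we have $w(s)>0$ for all $s$''), which in turn rests on the present lemma. At the point where the lemma is proved, only the first-order behavior \eqref{eqn-cccc} of $g$ at infinity is available (Lemma \ref{lem-V-B}), and that is too weak to fix the sign of $g_{\lambda_2}-g_{\lambda_1}$ for large $\eta$, so your maximal positivity interval for $\tilde w=g_{\lambda_2}-g_{\lambda_1}$ could a priori be of the form $(\rho_1,\infty)$ and your right-endpoint argument has no anchor. (No such issue affects the $f$-half: \eqref{eqn-asympt1} is established in section \ref{sec-Ub} independently of this lemma.) The fix is to anchor at the origin, as the paper does: for $\beta\ge\beta_1$ the origin asymptotics $g_\lambda\sim\lambda^{-1/(n\beta)}\eta^{-\alpha/\beta}$ give $Q(0^+)\le 0$ (indeed $Q(0^+)=0$ when $\beta>\beta_1$), so running your monotonicity of $Q$ on the first component $(0,\rho_1)$ of $\{\tilde w<0\}$ forces $Q(\rho_1)=0$, hence $\tilde w'(\rho_1)=0$ at the first touching point, and IVP uniqueness concludes exactly as in your $f$-argument --- with no reference to the behavior at infinity.
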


\begin{proof} We observe that in the case   $N\beta \geq  \alpha$  radially symmetric weak solutions of equation \eqref{eqn-ell} cannot cross: if they coincide at a point $\eta_0=|y_0|$ they must be the same. 
This follows by the simple observation that  uniqueness of weak solutions of equation  \eqref{eqn-ell}
holds on $B_{r_0}(0)$ when $N\beta > \alpha$. Assuming that  $\lambda_1 < \lambda_2$,  it then  follows from 
\eqref{eqn-zero} that \eqref{eqn-order} hold. 
\end{proof}

As a consequence of the previous lemma, we have the following.

\begin{prop} 
\label{cor-mon}
Assume that $N\beta \geq  \alpha$ (or  equivalently,   $\beta \geq \beta_1$). For any $0 < B_1 < B_2$ we have
$$  U_{\beta, B_2} < U_{\beta,B_1} < \C < V_{\beta,B_1} < V_{\beta, B_2}$$
with $\C$ denoting the cylindrical solution given by \eqref{eqn-cyl2}. 
In addition, if $\eta = |y|$ 
\begin{equation}\label{eqn-moncyl}
\frac{d}{d\eta} \big ( \eta^{2/n} f_{\beta,B} (\eta) \big ) >0   \qquad \mbox{and} 
\qquad \frac{d}{d\eta} \big ( \eta^{2/n} g_{\beta,B}(\eta) \big ) <0
\end{equation}
holds for any $B >0$.
\end{prop}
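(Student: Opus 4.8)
The statement to prove is Proposition \ref{cor-mon}: under the condition $N\beta \geq \alpha$ (equivalently $\beta \geq \beta_1$), for $0 < B_1 < B_2$ we have the ordering
$$U_{\beta, B_2} < U_{\beta,B_1} < \C < V_{\beta,B_1} < V_{\beta, B_2}$$
together with the monotonicity \eqref{eqn-moncyl} of $\eta^{2/n} f_{\beta,B}$ and $\eta^{2/n} g_{\beta,B}$ in the radial variable $\eta = |y|$.

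Let me sketch my plan.

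First I would translate the ordering of the self-similar solutions $U_{\beta,B}$ and $V_{\beta,B}$ into an ordering of the elliptic profiles $f_{\beta,B}$ and $g_{\beta,B}$. Since $U_{\beta,B}(x,t) = (T-t)^\alpha f_{\beta,B}(x(T-t)^\beta)$, at any fixed time the ordering of $U$'s in $B$ is equivalent to the pointwise ordering of the profiles $f_{\beta,B}$; likewise for $V$ and $g$. So it suffices to show
$$f_{\beta,B_2} < f_{\beta,B_1} \quad \text{and} \quad g_{\beta,B_1} < g_{\beta,B_2} \qquad (B_1 < B_2),$$
and that both the cylinder and these profiles are correctly interlaced. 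The key tool is the preceding lemma, which uses the parametrization $f_\lambda := f^\lambda_{\beta,B_0}$, $g_\lambda := g^\lambda_{\beta,\bar B_0}$ through the dilation $\lambda$. The main point I would exploit is the correspondence $B_\lambda = B_0\lambda^{-\gamma}$ and $\bar B_\lambda = \bar B_0 \lambda^{-\gamma}$ established just before the lemma: since $\gamma > 0$, increasing $\lambda$ \emph{decreases} $B$. Thus $B_1 < B_2$ corresponds to $\lambda_1 > \lambda_2$ in the $f$-parametrization, and the lemma gives $f_{\lambda_1} < f_{\lambda_2}$, i.e. $f_{\beta,B_1}$ (larger $\lambda$, smaller $B$) sits \emph{above} $f_{\beta,B_2}$, yielding $U_{\beta,B_2} < U_{\beta,B_1}$. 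The analogous bookkeeping for $g$ reverses the inequality because the lemma states $g_{\lambda_1} > g_{\lambda_2}$, so $V_{\beta,B_1} < V_{\beta,B_2}$.

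Next I would pin down the middle inequalities $U_{\beta,B_1} < \C < V_{\beta,B_1}$. The cylinder $\C$ is the singular solution of \eqref{eqn-ell} with profile $\bar\C(y) = (C^*/|y|^2)^{1/n}$, and by \eqref{eqn-asympt1}, the smooth profiles $f_{\beta,B}$ approach $\bar\C$ from \emph{below} at infinity (the correction term is $-B|y|^{-\gamma}$ with $B > 0$), while by \eqref{eqn-asympt2} the singular profiles $g_{\beta,B}$ approach $\bar\C$ from \emph{above} (correction $+B|y|^{-\gamma}$). Since by the lemma's no-crossing principle solutions of \eqref{eqn-ell} cannot intersect, a one-sided comparison at infinity propagates to a global strict inequality: $f_{\beta,B} < \bar\C$ everywhere and $g_{\beta,B} > \bar\C$ everywhere, which is exactly $U_{\beta,B} < \C < V_{\beta,B}$. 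Combined with the previous paragraph this assembles the full chain.

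Finally, for the monotonicity \eqref{eqn-moncyl} I would argue that $\eta^{2/n} f_{\beta,B}(\eta)$ is increasing and $\eta^{2/n} g_{\beta,B}(\eta)$ decreasing in $\eta$. The natural device is again scaling: the quantity $\eta^{2/n} f_{\beta,B}(\eta)$ is the scaling-invariant combination whose limit at infinity is $C^*$, so its monotone approach to $C^*$ should follow from the no-crossing property applied to the one-parameter dilation family $f_\lambda$. Concretely, comparing $f_{\beta,B}$ at a point $\eta$ with a dilate, and using that the dilates are ordered (the lemma), one extracts a sign for $\frac{d}{d\eta}(\eta^{2/n}f_{\beta,B})$; for $f$ the cylindrical value is approached from below so this combination increases to $C^*$, and for $g$ it is approached from above so it decreases to $C^*$. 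I expect the hardest part to be this last step: turning the discrete no-crossing/dilation ordering into a \emph{pointwise} derivative statement requires either differentiating the ordering in $\lambda$ (justifying that $\partial_\lambda f_\lambda$ has a definite sign and relating it to $\frac{d}{d\eta}(\eta^{2/n} f)$) or a clean maximum-principle argument on the equation \eqref{eqn-ell} for the quantity $\eta^{2/n} f$ directly. The condition $N\beta \geq \alpha$ is essential throughout because it is precisely what guarantees the no-crossing uniqueness on balls $B_{r_0}(0)$ used in the lemma; I would keep track of where strictness versus the borderline case $N\beta = \alpha$ (i.e. $\beta = \beta_1$) enters, since the equality case is where the Barenblatt closed form \eqref{eqn-ub} lives and the comparison is most delicate.
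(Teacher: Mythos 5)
Your proposal is correct and follows essentially the paper's own route: the no-crossing principle (uniqueness of the Dirichlet problem on balls, valid since $N\beta \geq \alpha$) combined with the ordered dilation families $f_\lambda$, $g_\lambda$ of the preceding lemma, with only cosmetic reshuffling (you get $U_{\beta,B} < \C < V_{\beta,B}$ from asymptotics plus no-crossing with the cylinder, while the paper reads it off from \eqref{eqn-moncyl}). The final step you flag as hardest is in fact immediate: since $\eta^{2/n} f_\lambda(\eta) = (\lambda\eta)^{2/n} f_{\beta,B_0}(\lambda\eta)$, the lemma's ordering of $f_\lambda$ in $\lambda$ \emph{is} the statement that $r \mapsto r^{2/n} f_{\beta,B_0}(r)$ is increasing (and likewise decreasing for $g$), so no differentiation in $\lambda$ or separate maximum-principle argument is needed.
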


\begin{proof} Similarly to the proof of the previous lemma,  any two solutions   from the  $f_{\beta, B_i}$ and $g_{\beta,B_i}$, $i=1,2$, 
of \eqref{eqn-ell} cannot cross 
each other when $N\beta >\alpha$. Hence, for any $0 < B_1 < B_2$ the monotonicity 
$U_{\beta, B_2} < U_{\beta,B_1} <  V_{\beta,B_1} < V_{\beta, B_2}$
readily follows from the behavior of those solutions at infinity. 
In addition, the monotonicity of the profiles $f_\lambda$ and $g_\lambda$, noted in the previous lemma, readily implies that
\eqref{eqn-moncyl} holds. This in particular implies that, for any $B_1, B_2 >0$, we have
$ U_{\beta,B_1} < \C <  V_{\beta, B_2}$ and the proof of the proposition is complete. 

\end{proof}

\section{Precise asymptotics of self-similar solutions}
\label{sec-asymp}

We will establish in this section   the precise asymptotics,  up to second order,  of the smooth self-similar solutions $U_{\beta,B}$ and the singular self-similar solutions $V_{\beta,B}$,  both of which have been discussed in the previous section. 
More precisely,  we will  prove Theorems \ref{thm-ss} and  \ref{thm-ss-sing}. 

It will be convenient to work in cylindrical coordinates.  Consider for the moment   any   solution $u$ of  fast-diffusion equation \eqref{eqn-fd}
that is defined on  $\R^N\times (-\infty , T)$, $T >0$ and vanishes at time $T$. Assuming that    $u(r,t)$ is radial,  set 
\begin{equation}\label{eqn-cylcor}
v(s,\tau )   = (T-t)^{-1/(p-1)} r^{2/(p-1)} \, u^m(r,t), \qquad r = e^s, \,\,\, \tau = - \log \, (T-t),
\end{equation}
where we recall that $p:=1/m=(N+2)/(N-2)$. 
Equation \eqref{eqn-u} is equivalent to 
$$
(v^p)_\tau =  v_{ss} +  \alpha \, v^p -  \bar \alpha \, v, \quad \bar \alpha = \frac {(N-2)^2}4 ,\quad \alpha = \frac p{p-1} 
= \frac {N+2}4 
$$
or equivalently 
$$
\bar \alpha^{-1} (v^p)_\tau = \bar \alpha^{-1}  v_{ss} +  \alpha\,  \bar \alpha^{-1} \, v^p -   v.
$$
Setting  $v =  \lambda \bar v$, we find (after multiplying the above equation by $\lambda^{-1}$) that 
$$
\bar \alpha^{-1} \lambda^{p-1}\, (\bar v^p)_\tau= \bar \alpha^{-1}  v_{ss} + 
 \alpha\, \lambda^{p-1}\,  \bar \alpha^{-1} \,  v^p -   v.
$$
Choosing $\lambda$ so that $ \alpha\, \lambda^{p-1}\,  \bar \alpha^{-1} =1$ we finally conclude the following equation for $\bar v$ 
(which we denote again  by $v$)
\begin{equation}\label{eqn-v}
\alpha^{-1} (\ v^p)_\tau= \bar \alpha^{-1}  v_{ss} +   v^p -   v. 
\end{equation}

A  self-similar solution $U(r,t)$ of \eqref{eqn-u} given by \eqref{eqn-ss} corresponds to a traveling wave solution 
$V:=v(x- \beta \tau)$  of \eqref{eqn-v}. It follows that   $v$ satisfies the differential equation
\begin{equation}\label{eqn-vv}
\bar \alpha^{-1}  v_{ss} + \beta  \, (p-1) \,  v^{p-1} v_s +   v^p -   v =0. 
\end{equation}
We notice that the cylindrical solution $\C$ of equation \eqref{eqn-u} given by \eqref{eqn-cyl2} now corresponds to the constant solution  $v=1$ of equation \eqref{eqn-vv}. 
To linearize \eqref{eqn-vv} around the constant solution $v=1$,  we set $v:=1+w$ and we find that
$w$ satisfies the differential equation
\begin{equation}\label{eqn-w}
\bar \alpha^{-1}  \,   w_{ss} + \beta \, (p-1)  \,  (1+w)^{p-1} w_s +   (1+w)^p -   (1+w) =0
\end{equation}
or equivalently,  since $(p-1)\bar \alpha = N-2$,
\begin{equation}\label{eqn-ww2}
w_{ss} + \beta \, (N-2)  \,  (1+w)^{p-1} w_s +  \frac{N-2}{p-1} [ (1+w)^p -   (1+w) ]  =0.
\end{equation}
The linearized operator of \eqref{eqn-w} around $w=0$ is
\begin{equation}\label{eqn-w2}
L_\beta  w:=  w_{ss} + \beta\, (N-2)     w_s +  (N-2) \, w=0.
\end{equation}
We may write \eqref{eqn-ww2} as 
\begin{equation}\label{eqn-w9}
w_{ss} + \beta  \, (N-2)    w_s +   (N-2) \, w = f
\end{equation}
with
\begin{equation}\label{eqn-w10}
f := - (N-2) \, \left ( \beta \,  [(1+w)^{p-1} -1] \, w_s + \frac 1{p-1} [  (1+w)^p - 1 - p\, w ] \right ).
\end{equation}
Observe that
$$f= - \frac{(N-2)}{p} \, \left ( \beta \,  \phi'(s) + \frac p{p-1} \phi(s)\right ),$$
where
\begin{equation}\label{eqn-phit}
\phi(w):= (1+w)^p -1 - p \, w = c_p \, w^2 + O(w^3), \qquad \mbox{as} \,\, w \to 0.
\end{equation}
Since $1+w \geq 0$, we have $w \geq -1$ always. We observe that $\phi(w)$ is a convex function of $w$,
since $p >1$, and that its only  local minimum on $[-1,+\infty)$ is attained at $w=0$ where $\phi(0)=0$. 
Hence, 
$$\phi(w) \geq 0  \qquad \mbox{for all } \,\, w \in [-1,+\infty).$$
 
We next look  for solutions of \eqref{eqn-w2} of the form $w(s) = C\, e^{-\gamma s}$. It follows that $\gamma$ satisfies 
equation \eqref{eqn-eqgamma} and its roots $\gamma_i$, $i=1,2$ are given by \eqref{eqn-gamma}. 
The roots $\gamma_i$ are  real  (which give non-oscillating solutions $w$) iff 
$$ \beta^2 (N-2)^2  - 4 (N-2)  \geq 0  \quad \mbox{or} \quad  \beta \geq \beta_0:= \frac{2 }{\sqrt{N-2}}.$$
Notice that 
$$\beta_1:= \frac{N+2}{2(N-2)} \geq  \beta_0 :=  \frac{2 }{\sqrt{N-2}}$$
for all $N \geq 3$ and that $\beta_1=\beta_0$ iff $N=6$. 
Also,  if $\beta \geq \beta_0$, then $\gamma_2 \geq  \gamma_1$ and $\gamma_2=\gamma_1$ iff $\beta=\beta_0$.  

\subsection{Second-order asymptotics of the smooth self-similar solutions $U_{\beta,B}$}
\label{sec-Ub}
Our goal in this subsection   is to prove Theorem \ref{thm-ss}. Assume from now on that $\beta > \beta_0$, so that $\gamma_2 > \gamma_1$. Perform the cylindrical change of coordinates \eqref{eqn-cylcor} for $U_{\beta,B}$ and denote by $w(s)$ as above the perturbation of our solution in cylindrical coordinates  from the cylinder $v(s) \equiv 1$ when $s\to +\infty$. Note that the smoothness of our radial solution $U_{\beta,B}$ at the origin implies $w(s) \sim -1$ as $s\to -\infty$. We may express the solution $w$ of \eqref{eqn-ww2}  using the variation of parameters formula as
$$w(s) = - e^{-\gamma_1 s} \, \int_{-\infty}^s \frac{e^{-\gamma_2 t}}{W(t)} \, f(t) \, dt +  
 e^{-\gamma_2 s} \, \int_{-\infty}^s \frac{e^{-\gamma_1 t}}{W(t)} \, f(t) \, dt$$
 were $W(t)$ denotes the Wronskian determinant of the solutions $e^{-\gamma_1 t}$, $e^{-\gamma_2 t}$
 of the homogeneous equation and is equal to $W(t)=(\gamma_1 - \gamma_2) \, e^{-(\gamma_1+\gamma_2) t}$.
 It follows that
$$
w(s) = \frac 1{\gamma_2-\gamma_1}  \left ( e^{-\gamma_1 s} \, \int_{-\infty}^s e^{\gamma_1 t} \, f(t) \, dt -   
 e^{-\gamma_2 s} \, \int_{-\infty}^s e^{\gamma_2 t} \, f(t) \, dt \right ). 
$$ 
We conclude that
\begin{equation}\label{eqn-w3}
w(s) =  - C_N  \left ( A_1 \, e^{-\gamma_1 s} \, \int_{-\infty}^s e^{\gamma_1 t} 
\, \phi(t) \, dt 
-    
 A_2\, e^{-\gamma_2 s} \, \int_{-\infty}^s e^{\gamma_2 t} \, \phi(t) \, dt \right ),
\end{equation} 
with
\begin{equation}\label{eqn-Ai}
A_i :=  \frac{p}{p-1}-\beta \, \gamma_i, \,\,  i=1,2, \quad C=C(\beta,N), \quad 
C_N:=\frac {(N-2)}{p(\gamma_2-\gamma_1)} >0.
\end{equation}
Set $$I_i(s) := \int_{-\infty}^s e^{\gamma_i t}  \, \phi(t) \, dt \qquad \mbox{and} \qquad I_i =  \int_{-\infty}^{+\infty} e^{\gamma_i t}  \, \phi(t) \, dt \leq +\infty,$$
and recall that \eqref{eqn-phit} holds.

\begin{lem} The following hold:
\begin{itemize}
\item $A_1 > A_2$ for $\gamma_2 > \gamma_1$ and $A_1=A_2$ iff $\gamma_1=\gamma_2$ (or equivalently iff $\beta=\beta_0$). 
\item For $\beta > \beta_1$, we have $A_2 < 0 < A_1$. 
\item For $\beta = \beta_1$, we have
$A_1 =0$, $A_2 <0$ if $N < 6$ and $A_1  >0$, $A_2 =0$ if $N >6$
and $A_1=A_2=0$ if $N=6$ (in this case $\gamma_1=\gamma_2$). 
\item For $\beta_0 < \beta < \beta_1$ we have: $A_1 >A_2 >0$,   if $N >6$
and $A_2 <A_1 <0$,   if $N <6$.
\end{itemize}
\end{lem}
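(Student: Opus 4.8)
The plan is to reduce all four items to a single clean identity for $A_i$ and then read everything off from the position of $\gamma_i$ relative to the threshold $\gamma_\ast := (N-2)/2$. First I would record that $\frac{p}{p-1}=\frac{N+2}{4}$ and that each root $\gamma_i$ of \eqref{eqn-gamma} satisfies $\gamma_1+\gamma_2=\beta(N-2)$, $\gamma_1\gamma_2=N-2$, equivalently $\beta(N-2)\gamma_i=\gamma_i^2+(N-2)$, so that $\beta\gamma_i=\frac{\gamma_i^2}{N-2}+1$. Substituting this into $A_i=\frac{p}{p-1}-\beta\gamma_i$ yields the key identity
\[
A_i=\frac{N-2}{4}-\frac{\gamma_i^2}{N-2}=\frac{(N-2-2\gamma_i)(N-2+2\gamma_i)}{4(N-2)}.
\]
Since $\gamma_i>0$ and $N\ge 3$, the factor $N-2+2\gamma_i$ is positive, so $\sign A_i=\sign(\gamma_\ast-\gamma_i)$ and $A_i$ is strictly decreasing in $\gamma_i$.

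The first bullet is then immediate: $A_1-A_2=\beta(\gamma_2-\gamma_1)>0$ whenever $\gamma_2>\gamma_1$, with equality exactly when the discriminant in \eqref{eqn-gamma} vanishes, i.e. $\beta=\beta_0$. For the remaining items I would locate $\gamma_\ast$ relative to the roots by evaluating the upward parabola $q(\gamma):=\gamma^2-\beta(N-2)\gamma+(N-2)$ at $\gamma_\ast$,
\[
q(\gamma_\ast)=(N-2)\left(\frac{N+2}{4}-\frac{\beta(N-2)}{2}\right),
\]
so that $\sign q(\gamma_\ast)=\sign(\beta_1-\beta)$.

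When $\beta>\beta_1$ one has $q(\gamma_\ast)<0$, hence $\gamma_1<\gamma_\ast<\gamma_2$, and the sign rule gives $A_1>0>A_2$, which is the second bullet. When $\beta=\beta_1$ one has $q(\gamma_\ast)=0$, so $\gamma_\ast$ is itself a root; using $\gamma_1\gamma_2=N-2$ the two roots are exactly $(N-2)/2$ and $2$. The root equal to $\gamma_\ast$ gives $A=0$, while at the root $\gamma=2$ the identity yields $A=\frac{(N+2)(N-6)}{4(N-2)}$, whose sign is that of $N-6$. Comparing $2$ with $(N-2)/2$ to decide which is the smaller root $\gamma_1$ and which is $\gamma_2$ then produces all three subcases of the third bullet, with $\gamma_1=\gamma_2=2$ at $N=6$ consistent with $\beta_1=\beta_0$ there.

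For the last bullet, $\beta_0<\beta<\beta_1$, one has $q(\gamma_\ast)>0$, so $\gamma_\ast$ lies outside $[\gamma_1,\gamma_2]$; to decide on which side I would compare the midpoint $\tfrac12(\gamma_1+\gamma_2)=\tfrac12\beta(N-2)$ with $\gamma_\ast=(N-2)/2$, which places $\gamma_\ast$ to the left of both roots iff $\beta>1$ and to the right iff $\beta<1$. The only genuinely fiddly step is relating $\beta\gtrless 1$ to $N\gtrless 6$ on this interval: since $\beta_0=\beta_1=1$ precisely at $N=6$, for $N>6$ both $\beta_0,\beta_1<1$ so the whole interval lies in $\{\beta<1\}$ (both roots $<\gamma_\ast$, both $A_i>0$), while for $N<6$ both $\beta_0,\beta_1>1$ so the whole interval lies in $\{\beta>1\}$ (both roots $>\gamma_\ast$, both $A_i<0$). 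Combining with the monotonicity $A_1>A_2$ from the first bullet gives $A_1>A_2>0$ for $N>6$ and $A_2<A_1<0$ for $N<6$. The main obstacle is simply the discovery of the identity for $A_i$; once it is in hand the argument is elementary quadratic-root bookkeeping, the only delicate point being the consistent labeling of $\gamma_1,\gamma_2$ across the crossing at $N=6$.
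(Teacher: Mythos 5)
Your proposal is correct, and it reaches the lemma by a genuinely different route than the paper. The paper's proof substitutes the explicit root formula \eqref{eqn-gamma} into $A_i=\frac{p}{p-1}-\beta\gamma_i$ to get $A_i(\beta) = \frac{N-2}2\bigl(\frac p2 - \bigl(\beta^2 \mp \beta\sqrt{\beta^2-\frac{4}{N-2}}\bigr)\bigr)$, differentiates in $\beta$ to show $A_1(\beta)$ is increasing and $A_2(\beta)$ is decreasing, and then reads off all four bullets from the signs of $A_1,A_2$ at the endpoints $\beta_0$ and $\beta_1$. You avoid calculus entirely: from the Vieta relations $\gamma_1+\gamma_2=\beta(N-2)$, $\gamma_1\gamma_2=N-2$ you extract the identity $A_i=\frac{(N-2-2\gamma_i)(N-2+2\gamma_i)}{4(N-2)}$, so that $\sign A_i=\sign\bigl(\frac{N-2}2-\gamma_i\bigr)$, and the whole lemma becomes root-location bookkeeping for the quadratic $q$ at the threshold $\gamma_\ast=(N-2)/2$. (A small remark: your relations correspond to $\gamma^2-\beta(N-2)\gamma+(N-2)=0$, which is what the characteristic exponents of \eqref{eqn-w2} for $w=e^{-\gamma s}$ actually satisfy and what the displayed roots \eqref{eqn-gamma} solve; the $+\beta(N-2)\gamma$ sign in \eqref{eqn-eqgamma} is a slip in the paper, so your reading is the correct one.) All the delicate steps check out: $\sign q(\gamma_\ast)=\sign(\beta_1-\beta)$ is a correct computation, the midpoint comparison $\frac12\beta(N-2)\gtrless\gamma_\ast$ iff $\beta\gtrless 1$ legitimately decides the side on which $\gamma_\ast$ lies once it is known to be outside $[\gamma_1,\gamma_2]$, and the dichotomy works because $\beta_0$ and $\beta_1$ are simultaneously $<1$ for $N>6$ and $>1$ for $N<6$, coinciding at $1$ when $N=6$, so the interval $(\beta_0,\beta_1)$ never straddles $\beta=1$. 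What each approach buys: yours gives exact pointwise sign information — e.g.\ at $\beta=\beta_1$ the roots are exactly $2$ and $(N-2)/2$ and $A=\frac{(N-6)(N+2)}{4(N-2)}$ at $\gamma=2$, making the three subcases of the third bullet transparent, consistent with the paper's later use of $\gamma_2(\beta_1)=2$ — while the paper's route records the strict monotonicity of $A_1,A_2$ in $\beta$, a slightly stronger fact, though only the endpoint signs are used for the lemma itself.
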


\begin{proof}
We have
$$A_i(\beta) = \frac{N-2}2 \, \left  ( \frac p2 - \big ( \beta^2 \mp \beta\, \sqrt{ \beta^2  -  \frac 4 {N-2}} \big ) \right ),
\quad i=1,2$$ 
hence by direct calculation
$$A_i'(\beta) =  \pm \frac{N-2}2 \, \frac { \big ( \beta  \mp  \sqrt{ \beta^2  -  \frac 4 {N-2}} \big )^2}
{\sqrt{ \beta^2  -  \frac 4 {N-2}}}.$$
Hence, $A_1(\beta)$ is increasing and $A_2(\beta)$ is decreasing. Also, 
$A_1(\beta_1) >0$ if $N >6$ and $A_1(\beta_1) =0$ if $N \leq 6$. Similarly, $A_2(\beta_1) =0$ if $N \ge6$ and $A_2(\beta_1) < 0$ if $N < 6$. Also, $A_i(\beta_0) >0$ if $N>6$ and  $A_i(\beta_0)=0$ if $N=6$ and 
$A_i(\beta_0) <0$ if $N<6$.  Hence:

\begin{itemize}
\item If $\beta > \beta_1$, then $A_1(\beta) > A_1(\beta_1) \geq 0$ and $A_2(\beta) < A_2(\beta_1) \leq 0$.
\item  If $N >6$, $\beta_0 < \beta < \beta_1$, we have $A_1(\beta) > A_1(\beta_0)> 0$,
$A_2 (\beta)  > A_2(\beta_1)=0$.
\item  If $N<6$, $\beta_0 < \beta < \beta_1$, we have $A_1(\beta)< A_1(\beta_1)=0$ and 
$A_2(\beta)  < A_2(\beta_0)<0$.
\item  If $N=6$, $\beta_0=\beta_1$ and $A_1 =A_2=0$, if $\beta=\beta_1=\beta_0$.  
\end{itemize}
\end{proof}

\begin{lem}\label{lem-www} Assume  $\beta > \beta_0$. We have
\begin{equation}\label{eqn-linfty1}
|w(s)| \leq C\, e^{-\gamma_1 s}, \qquad \mbox{for all} \,\,\, s \geq s_0
\end{equation}
for some constant $C$ depending on $N$ and $\beta$. Moreover,  if $N \le 6$ and $\beta_0 < \beta <  \beta_1$  there exists an $s_0$ such that $w(s) \ge 0$ for all $s\ge s_0$. In all other cases $w(s) <  0$ for all $s \in \R$.
\end{lem}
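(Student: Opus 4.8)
The plan is to read everything off the integral representation \eqref{eqn-w3}, using only the pointwise positivity $\phi\ge 0$ on $[-1,\infty)$, the sign data for $A_1,A_2$ from the previous lemma, and the ordering $\gamma_2>\gamma_1>0$. Writing $I_i(s)=\int_{-\infty}^s e^{\gamma_i t}\phi(t)\,dt\ge 0$ and setting $P:=e^{-\gamma_1 s}I_1(s)$, $Q:=e^{-\gamma_2 s}I_2(s)$, the first observation I would record is the elementary comparison
$$0\le Q=\int_{-\infty}^s e^{-\gamma_2(s-t)}\phi(t)\,dt\le\int_{-\infty}^s e^{-\gamma_1(s-t)}\phi(t)\,dt=P,$$
valid because $s-t\ge 0$ on the range of integration and $\gamma_2>\gamma_1$. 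In this notation \eqref{eqn-w3} reads $w=-C_N(A_1P-A_2Q)$, so that $|w|\le C_1P$ with $C_1:=C_N(|A_1|+|A_2|)$; moreover $P>0$ and $Q>0$ at every finite $s$, since $w(t)\to-1$ as $t\to-\infty$ forces $\phi(w(t))>0$ there.

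For the decay bound \eqref{eqn-linfty1} I would bootstrap on $m(s):=P$, which satisfies the exact identity $m'=-\gamma_1 m+\phi(w)$. Since $w\to 0$ gives $\phi(w)\to 0$, splitting the defining integral of $m$ at a large point shows $\limsup_{s\to\infty}m\le\varepsilon/\gamma_1$ for every $\varepsilon>0$, hence $m\to 0$. Now $\phi(w)=c_p w^2+O(w^3)$ and $|w|\le C_1 m$ give $\phi(w)\le c\,C_1^2 m^2$ for $s$ large, so the identity yields $m'\le-\tfrac{\gamma_1}{2}m$ once $cC_1^2 m\le\gamma_1/2$, whence $m=O(e^{-\gamma_1 s/2})$. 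Feeding this back gives $\phi(w)=O(e^{-\gamma_1 s})$, so $m'\le-\gamma_1 m+Ce^{-\gamma_1 s}$ and $m=O(s\,e^{-\gamma_1 s})$; this in turn makes $\phi(w)=O(s^2 e^{-2\gamma_1 s})$, so that $e^{\gamma_1 t}\phi(t)$ is integrable, $I_1(\infty)<\infty$, and finally $m(s)\le I_1(\infty)e^{-\gamma_1 s}$. Since $|w|\le C_1 m$, this is exactly \eqref{eqn-linfty1}. I expect this two-step bootstrap to be the main obstacle: the statement $w\to 0$ supplies no rate by itself, so one must first manufacture some exponential decay from the linear part $-\gamma_1 m$ before the quadratic gain in $\phi$ can be iterated up to the sharp rate $\gamma_1$.

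With the decay in hand, the sign of $w$ follows by case analysis from $w=-C_N(A_1P-A_2Q)$ together with $0\le Q\le P$, $C_N>0$, $P>0$. If $A_2\le 0\le A_1$ with $(A_1,A_2)\ne(0,0)$ --- this is $\beta>\beta_1$ (then $A_2<0<A_1$), or $\beta=\beta_1$ with $N>6$ (then $A_1>0=A_2$), or $\beta=\beta_1$ with $N<6$ (then $A_1=0>A_2$) --- the bracket $A_1P-A_2Q$ is strictly positive, so $w<0$ for all $s\in\R$. If $A_1>A_2>0$, which is the case $\beta_0<\beta<\beta_1$ with $N>6$, I would write $A_1P-A_2Q\ge(A_1-A_2)P>0$ using $Q\le P$, so again $w<0$ for all $s$. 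Since $\beta_0=\beta_1$ precisely when $N=6$, these subcases exhaust everything except $\beta_0<\beta<\beta_1$ with $N<6$.

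In that remaining case the previous lemma gives $A_2<A_1<0$, so $w=C_N(|A_1|P-|A_2|Q)$. Here I would invoke the decay once more: from $\phi(w)=O(e^{-2\gamma_1 s})$ one checks that $Q/P\to 0$ as $s\to\infty$ --- splitting according to the sign of $\gamma_2-2\gamma_1$, each branch giving $Q/P\to 0$ up to at most a power of $s$ --- so $|A_1|P$ dominates $|A_2|Q$ for $s$ large and hence $w>0$ for all $s\ge s_0$. This is the final assertion of the lemma, and since $N\le 6$ with $\beta_0<\beta<\beta_1$ is nonempty only for $N<6$, it covers exactly the stated range.
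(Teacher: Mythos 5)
Your proof is correct, and in the two delicate steps it takes a genuinely different route from the paper, while sharing the same skeleton (the representation \eqref{eqn-w3}, positivity of $\phi$, and the sign table for $A_1,A_2$). For the decay bound \eqref{eqn-linfty1} and the finiteness of $I_1$, the paper argues by contradiction: assuming $I_1=+\infty$ it derives the Riccati-type inequality $J_{s_0}'(s)\le 2C e^{-\gamma_1 s}J_{s_0}(s)^2$, integrates to get $J_{s_0}(s)\ge c\,e^{\gamma_1 s}$, and contradicts $w\to 0$ through \eqref{eq-diff-ineq}. You instead run a direct, constructive bootstrap on $m(s)=e^{-\gamma_1 s}I_1(s)$ via the exact identity $m'=-\gamma_1 m+\phi$, and the key structural advantage is that your pointwise bound $|w|\le C_N(|A_1|+|A_2|)\,m$ (from $Q\le P$) is insensitive to the signs of $A_1$ and of $w$; consequently you obtain the decay \emph{uniformly in all cases} before touching the sign question, whereas the paper's bounds \eqref{eqn-fw3} and \eqref{eq-case2} are sign-dependent, which forces it to interleave the decay estimate with the sign dichotomy. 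Your bootstrap chain ($m\to 0$, then $m=O(e^{-\gamma_1 s/2})$ from $\phi\le c\,w^2\le c'\,m^2$, then $m=O(s e^{-\gamma_1 s})$, then $I_1<\infty$ and $m\le I_1 e^{-\gamma_1 s}$) is sound; as you anticipated, manufacturing an initial exponential rate from the linear term before exploiting the quadratic gain in $\phi$ is exactly what replaces the paper's contradiction. Second, for eventual nonnegativity when $N<6$ and $\beta_0<\beta<\beta_1$, the paper uses the ODE inequality \eqref{eq-touse} to show $w\ge 0$ is preserved past a first zero and then excludes the alternative ($w<0$ for all $s$) by contradiction; you conclude directly from the dominance $Q/P\to 0$ (available unconditionally thanks to your decay bound, after splitting on the sign of $\gamma_2-2\gamma_1$ and using $P\ge I_1(s_1)e^{-\gamma_1 s}$ with $I_1(s_1)>0$, which your observation $\phi(w)\to p-1>0$ as $s\to-\infty$ guarantees), giving the strictly stronger conclusion $w>0$ for $s\ge s_0$. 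A small bonus of your route: the case $\beta=\beta_1$, $N<6$ is handled directly from $A_1=0>A_2$, since then $w=C_N A_2 Q<0$, avoiding the paper's appeal to the explicit Barenblatt form. Your case analysis is exhaustive (the range $N\le 6$, $\beta_0<\beta<\beta_1$ is empty at $N=6$, as you note), so the proposal establishes the full statement.
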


\begin{proof}
By  \eqref{eqn-w3} and \eqref{eqn-phit} we have 
$$
w(s) = - C_N \left ( A_1 \, e^{- \gamma_1 s} \,  I_1(s) - A_2 \, e^{- \gamma_2 s} \,  I_2(s) \right ),  
$$
as $s \to +\infty$, where $A_i$ are given by \eqref{eqn-Ai}, $C_N >0$   and $\lim_{s \to +\infty} w(s) = 0$. 
The case $A_1 = 0$ happens only when $\beta = \beta_1$ and  the solution is then explicit (Barenblatt solution). Therefore, we may assume from now on that $$A_1 \neq 0.$$
As $s \to \infty$,  
\begin{equation}\label{eqn-fw1}
w(s) =  - C_N \left (  \int_{-\infty}^s  \big ( A_1 e^{-\gamma_1 \, (s-t)}  - A_2 \, e^{- \gamma_2 (s-t)} \big )\, \phi(t)\, dt  \right )
\end{equation}
Moreover, 
$$\phi(t) \geq 0, \quad \forall t \qquad \mbox{and} \qquad e^{-\gamma_1(s-t)} \geq e^{-\gamma_2(s-t)}, \quad \forall t \leq s.$$
Recall that $A_2 <  A_1$ always when $\beta > \beta_0$. Note that $A_1 > 0$ in all cases except when $N \le 6$ and $\beta_0 < \beta \le \beta_1$. Since $A_2 < A_1$, it follows from \eqref{eqn-fw1} that when $A_1 >0$
 we have $w <0$ for all $s$ and 
\begin{equation}\label{eqn-fw3}
M_1
 \int_{-\infty}^s  e^{\gamma_1 t} \,   \phi(t) \, dt \leq e^{\gamma_1 s} \, |w(s)| \leq  M_2  
 \int_{-\infty}^s  e^{\gamma_1 t} \,   \phi(t) \, dt,
\end{equation}
with  $M_1:=A_1-|A_2| >0$ and $M_2:=A_1+|A_2| < \infty$. 

We claim that in the case when $N \le 6$ and $\beta_0 < \beta  < \beta_1$ there exists an $s_0$ such that  $w(s) \ge 0$, for all $s\ge s_0$. In order to see that, we multiply equation \eqref{eqn-w9} by $e^{\gamma_1 s}$ and integrate it over 
$(-\infty,s]$. 
After integration by parts, using that $-\gamma_1$ is a  solution to the characteristic equation for \eqref{eqn-w2} 
and that  $\gamma_1 + \gamma_2 = \beta(N-2)$,  we obtain
\begin{equation}
\label{eq-diff-ineq}
w_s + \gamma_2 \, w = - C_N \, e^{-\gamma_1 s}\, A_1\, \int_{-\infty}^s  \phi(t)\,  e^{\gamma_1 t}\, dt - C_N\, \phi.
\end{equation}
Since $A_1 <0$ in this case and $C_N\, \phi \leq c\, w^2$ for $s \geq s_0$ (by \eqref{eqn-phit}) we conclude 
\begin{equation}
\label{eq-touse}
w_s + \gamma_2 \, w \ge C_N \, |A_1|  e^{-\gamma_1 s} I_1(s) - c\, w^2
\end{equation}
for $s\ge s_0$. The above yields   that if ever $w(s_0) = 0$ for some $s_0$, then $w_s |_{s=s_0} \ge 0$, implying that 
$w(s) \ge 0$ for all $s \ge s_0$. Therefore we have two possibilities, either there exists an $s_0$ such that $w(s) \ge 0$ for all $s\ge s_0$, or $w(s)  <  0$ for all $s$. In the latter case, using that $\lim_{s\to \infty} w(s) = 0$, we can choose a tiny $\epsilon > 0$ so that for $s \ge s_0$
$$w_s + (\gamma_2 - \epsilon) \, w \ge 0,$$
implying
$$(w e^{(\gamma_2-\epsilon)s})_s \ge 0.$$
Since $\gamma_2 > \gamma_1$  we conclude  
$$|w(s)| \le C e^{-\gamma_1 s}, \qquad  s\ge s_0.$$
This would immediately imply that \eqref{eqn-linfty1} holds.  Since $A_1 < 0$ this would mean $w(s) > 0$ for $s$ sufficiently large, which contradicts our assumption that $w(s) \le 0$ for all $s$.  We conclude that the first possibility always holds,
namely $w(s) \ge 0$ for all $s \ge s_0$. Since $A_2 < A_1 <0$, it follows from \eqref{eqn-fw1} that 
\begin{equation}
\label{eq-case2}
e^{\gamma_1 s}\,  |w| \le C_N \, |A_1| \, \int_{-\infty}^s  \phi(t)\,  e^{\gamma_1 t}\, dt.
\end{equation}

We will now show that in all cases  \eqref{eqn-linfty1} holds.  By \eqref{eqn-fw3} (holding when $A_1 >0$) and 
\eqref{eq-case2} (holding when $A_1 <0$) it is sufficient to prove that   
$$I_1:= \int_{-\infty}^{+\infty} e^{\gamma_1 t}  \, \phi(t) \, dt < +\infty.$$
Indeed, assume that $I_1=+\infty$ and choose $s_0$ sufficiently large  that both 
 \eqref{eqn-phit}  and 
$$\int_{-\infty}^{s_0} e^{\gamma_1 t}  \, \phi(t) \, dt \leq \int_{s_0}^s e^{\gamma_1 t}  \, \phi(t) \, dt
\qquad \mbox{for} \,\,\, s >>1$$ 
 hold. 
By \eqref{eqn-fw3} and \eqref{eq-case2}, 
$$|w(s)| \leq C\, e^{- \gamma_1 s}\, \int_{s_0}^s e^{\gamma_1 t}  \, w^2(t) \, dt  \qquad \mbox{for} \,\,\, s >>1$$
for some positive constant $C >0$. 
We conclude that  $J_{s_0}  (s):=\int_{s_0}^s   e^{\gamma_1 t}  \, \phi(t) \, dt$ satisfies 
$$ J_{s_0}'(s) \leq  2 C\, e^{-\gamma_1 s} \, J_{s_0}(s)^2 \qquad \mbox{for}\,\,\, s >>1$$
from which, after we integrate on $[s,+\infty)$ and use that $J_{s_0} := \lim_{s\to \infty} J_{s_0} (s) =+\infty$, we obtain the lower bound
$$J_{s_0} (s) \ge c  \, e^{\gamma_1 s}$$ for some absolute $c >0$. Since for $s >>1$  we either have $w \le 0$ and $A_1 > 0$ or $w \ge 0$ and $A_1 < 0$,  this lower bound and \eqref{eq-diff-ineq} imply
$$|w_s| + (\gamma_2 + \epsilon) |w| \ge C_N e^{-\gamma_1 s} |A_1| \, J_{s_0}(s) \ge c >0$$ 
yielding a contradiction since  $\lim_{s\to \infty} w(s) = 0.$

\smallskip
We will finish by showing that $w(s) < 0$ for all $s \in \R$ in the cases  stated in the lemma. 
As shown in \eqref{eqn-Ai}, the  constant $C_N$ in front of $\phi(s)$ in \eqref{eqn-w3} is positive. 
In the case when $\beta > \beta_1$ and  $N \geq 3$   we have $A_2 < 0 < A_1$ and therefore \eqref{eqn-w3} implies that $w(s) < 0$ for all $s$. In the case  $\beta_0 \le \beta < \beta_1$ and $N \ge 6$,   since $A_1 > A_2 > 0$ and $\gamma_1 < \gamma_2$,  we have $A_1 e^{-\gamma_1(s-t)} - A_2 e^{-\gamma_2(s-t)} > 0$. Equation \eqref{eqn-w3}  implies again  that $w(s) < 0$ for all $s$.

\end{proof}

\begin{lem}\label{lem-www33} If $A_1 \neq 0$, then 
\begin{equation}\label{eqn-I1}
w(s) = -  C_N \,  A_1 \, I_1 \, e^{- \gamma_1 s} (1+ o(1))
\end{equation}
with $I_1:= \int_{-\infty}^{+\infty} e^{\gamma_1 t} \, \phi(t) \, dt$ satisfying $0 < I_1 < \infty$. 
\end{lem}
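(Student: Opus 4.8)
The plan is to extract the leading-order behaviour directly from the variation of parameters formula \eqref{eqn-w3}. Multiplying that identity by $e^{\gamma_1 s}$ and separating the two exponential scales gives
\[
e^{\gamma_1 s}\, w(s) = - C_N \left( A_1 \int_{-\infty}^s e^{\gamma_1 t}\, \phi(t)\, dt - A_2\, e^{-(\gamma_2-\gamma_1) s}\int_{-\infty}^s e^{\gamma_2 t}\, \phi(t)\, dt \right).
\]
I would then show that, as $s \to +\infty$, the first bracketed term converges to $A_1 I_1$ while the second tends to $0$; since $A_1 \neq 0$ by hypothesis and $I_1 > 0$, this yields \eqref{eqn-I1}.

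Next I would record the two properties of $I_1$. Finiteness, $I_1 < \infty$, has already been established inside the proof of Lemma \ref{lem-www} (the argument ruling out $I_1 = +\infty$), so it may be quoted directly; consequently $\int_{-\infty}^s e^{\gamma_1 t}\phi(t)\,dt \to I_1$ as $s \to +\infty$. Positivity is immediate: by \eqref{eqn-phit} the convex function $\phi$ satisfies $\phi \ge 0$ with $\phi(w) > 0$ whenever $w \neq 0$, and since $w$ interpolates between $w \sim -1$ as $s \to -\infty$ and $w \to 0$ as $s \to +\infty$ it is not identically zero, so $e^{\gamma_1 t}\phi(t)$ is strictly positive on a set of positive measure and $I_1 > 0$.

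The main work is controlling the second term, and I expect this to be the principal obstacle. Here I would combine the decay estimate \eqref{eqn-linfty1}, $|w(s)| \le C e^{-\gamma_1 s}$ for $s \ge s_0$, with the quadratic bound $\phi(t) \le c\, w^2(t)$ coming from \eqref{eqn-phit} to obtain $\phi(t) \le c'\, e^{-2\gamma_1 t}$ for $t \ge s_0$. Substituting this bound into $\int_{s_0}^s e^{\gamma_2 t}\phi(t)\,dt$ reduces matters to estimating $\int_{s_0}^s e^{(\gamma_2 - 2\gamma_1)t}\,dt$, and I would split into the three cases according to the sign of $\gamma_2 - 2\gamma_1$. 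In each case, multiplication by the prefactor $e^{-(\gamma_2-\gamma_1) s}$ produces a quantity bounded by a constant times $e^{-(\gamma_2-\gamma_1)s}$, $e^{-\gamma_1 s}$, or $s\, e^{-\gamma_1 s}$ respectively, each of which tends to $0$ as $s \to +\infty$ because $\gamma_2 > \gamma_1 > 0$ and exponential decay dominates the polynomial factor. The fixed contribution $\int_{-\infty}^{s_0}$ is harmless, being multiplied by $e^{-(\gamma_2-\gamma_1)s} \to 0$. Passing to the limit then gives $\lim_{s\to\infty} e^{\gamma_1 s} w(s) = -C_N A_1 I_1$, which is exactly \eqref{eqn-I1}.
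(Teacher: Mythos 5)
Your proposal is correct and follows essentially the same route as the paper: the paper likewise starts from the variation-of-parameters formula \eqref{eqn-fw1}, uses the quadratic bound $\phi(t)\le 2c_p\,w^2(t)$ from \eqref{eqn-phit} together with the decay estimate \eqref{eqn-linfty1} to show $e^{-\gamma_2 s}I_2(s)=o(e^{-\gamma_1 s})$, and quotes the finiteness of $I_1$ already established in the proof of Lemma \ref{lem-www}. Your explicit case analysis on the sign of $\gamma_2-2\gamma_1$ and your verification that $I_1>0$ (via $\phi\ge 0$, $w\not\equiv 0$) are in fact slightly more careful than the paper's one-line estimate, but the argument is the same.
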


\begin{proof}
We will use \eqref{eqn-fw1}.    We first observe that by \eqref{eqn-phit} and \eqref{eqn-linfty1} we have 
$$e^{- \gamma_2 s} \,  I_2(s) \leq  e^{- \gamma_2 s} \, I_2(s_0) + 2 c_p  \, e^{- \gamma_2 s} \, \int_{s_0}^s e^{\gamma_2t} w^2(t)\, dt \leq  e^{- \gamma_2 s} \, I_2(s_0) + C\, e^{-2\gamma_1 s}.$$
Since  $I_1 \neq 0$ and $\gamma_2 > \gamma_1$,  it follows from \eqref{eqn-fw1} that as $s \to \infty$, 
\eqref{eqn-I1} holds.
\end{proof}
The above discussion leads to the following Proposition.

\begin{prop}
\label{prop-cyl}
Let $m = (N-2)/(N+2)$, $N \ge 3$, $\beta_0 := 2/\sqrt{N-2}$ and $\beta_1 = 1/(2m)$. We have the following:
\begin{itemize} 
\item For $N \geq 6$ and  $\beta >  \beta_0$ or $2 < N < 6$ and $\beta  >  \beta_1$
 the solution to \eqref{eqn-ww2} admits  the slow behavior 
$w(s) =- B \, e^{-\gamma_1 s} (1+o(1))$ with  $B>0$. 

\item For $2 < N < 6$ and $\beta_0 < \beta  < \beta_1$ 
the solution to \eqref{eqn-ww2} admits  the slow behavior  
$w(s) =  B \, e^{-\gamma_1 s} (1+o(1)) $ with  $B >0$.

\item  For $2 < N \le 6$ and $ \beta= \beta_1$,  the solution to \eqref{eqn-ww2} admits  the fast  behavior 
$w(s) =- B \, e^{-\gamma_2  s} (1+o(1))$, with  $\gamma_2=\gamma_2(\beta_1)=2$, $B >0$. 
\end{itemize}
\end{prop}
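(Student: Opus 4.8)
The plan is to read the three stated forms of $w$ directly off the three preceding lemmas, the decisive input being the sharp asymptotic of Lemma~\ref{lem-www33}: whenever $A_1 \neq 0$ one has
\[
w(s) = - C_N\, A_1\, I_1\, e^{-\gamma_1 s}\,(1+o(1)), \qquad s \to +\infty,
\]
with $C_N>0$ by \eqref{eqn-Ai} and $0 < I_1 < \infty$. Thus the exponent and the sign of the leading coefficient of $w$ are governed entirely by whether $A_1$ vanishes and, when it does not, by $\sign A_1$; and $\sign A_1$ is pinned down in each parameter regime by the preceding lemma on the signs of $A_1,A_2$. First I would dispatch every non-degenerate case, i.e. $\beta \neq \beta_1$, where $A_1\neq 0$ and Lemma~\ref{lem-www33} applies verbatim.

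When ($N \ge 6$ and $\beta > \beta_0$) or ($2 < N < 6$ and $\beta > \beta_1$), that sign lemma gives $A_1 > 0$ throughout (for $N=6$ note $\beta_0=\beta_1$, so $\beta>\beta_0$ forces $\beta>\beta_1$); then $-C_N A_1 I_1 < 0$ and Lemma~\ref{lem-www33} yields $w(s) = -B\, e^{-\gamma_1 s}(1+o(1))$ with $B := C_N A_1 I_1 > 0$, the claimed slow behavior. When $2 < N < 6$ and $\beta_0 < \beta < \beta_1$, the same lemma gives $A_1 < 0$, so $-C_N A_1 I_1 > 0$ and $w(s) = B\, e^{-\gamma_1 s}(1+o(1))$ with $B := -C_N A_1 I_1 > 0$, which is the second asserted form.

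It remains to treat the borderline value $\beta = \beta_1$, where $A_1 = 0$ for $N \le 6$ and Lemma~\ref{lem-www33} no longer applies. Here $\alpha = \beta N$, so the self-similar profile is the explicit Barenblatt solution $\B_\lambda$ of \eqref{eqn-ub}. I would substitute $\B_\lambda$ into the cylindrical change of variables \eqref{eqn-cylcor} and expand as $r=e^s\to\infty$: since $\B_\lambda^m \sim (C^*/|y|^2)^{m/n}\big(1 - \tfrac{N-2}{4}\lambda^2 |y|^{-2} + \cdots\big)$ and $2/(p-1)=2m/n=(N-2)/2$, the leading factor is constant (so $v\to 1$) and the first correction gives $w(s) = -B\, e^{-2s}(1+o(1))$ with $B = \tfrac{N-2}{4}\lambda^2 > 0$. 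Finally I would read off from \eqref{eqn-eqgamma}--\eqref{eqn-gamma} that at $\beta=\beta_1$ the roots satisfy $\gamma_1\gamma_2 = N-2$ and $\gamma_1+\gamma_2 = (N+2)/2$, whence $\gamma_2 = 2$ (and $\gamma_1=(N-2)/2$); so the expansion is exactly the claimed fast behavior $w(s) = -B\, e^{-\gamma_2 s}(1+o(1))$. As an alternative avoiding the explicit profile, one may set $A_1=0$ directly in \eqref{eqn-w3} to obtain $w(s) = C_N A_2 I_2 e^{-\gamma_2 s}(1+o(1))$ and conclude from $A_2<0$ and $0<I_2<\infty$ (the latter following, as in Lemma~\ref{lem-www}, from $w=O(e^{-\gamma_2 s})$ in this regime).

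The individual computations are routine; the only genuine care lies in the bookkeeping of $\sign A_1$ across the regimes and in the separate, explicit treatment of the Barenblatt value $\beta=\beta_1$. This also absorbs the degenerate subcase $N=6$, $\beta_1=\beta_0$, where $\gamma_1=\gamma_2=2$ and $A_1=A_2=0$ so that the variation-of-parameters representation collapses, yet the Barenblatt expansion still gives $w(s)=-B\,e^{-2s}(1+o(1))$ consistent with $\gamma_2=2$. No estimate beyond those established in Lemmas~\ref{lem-www} and~\ref{lem-www33} is needed, so I expect no real obstacle: the Proposition is essentially a corollary that repackages them.
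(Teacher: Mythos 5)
Your proposal is correct and takes essentially the same route as the paper's (two-line) proof: in all cases with $A_1\neq 0$ the asymptotics are read off from Lemma \ref{lem-www33} (i.e.\ \eqref{eqn-I1}) combined with the sign bookkeeping for $A_1$, and the borderline case $\beta=\beta_1$ with $N\le 6$ is handled via the explicit Barenblatt profile \eqref{eqn-ub}, for which you actually carry out the cylindrical expansion (obtaining $B=\tfrac{N-2}{4}\lambda^2>0$ and $\gamma_2=2$, including the degenerate subcase $N=6$ where $\gamma_1=\gamma_2$) that the paper merely asserts. One minor caveat on your optional aside: deducing $w=O(e^{-\gamma_2 s})$ ``as in Lemma \ref{lem-www}'' is not immediate, since that lemma only gives $w=O(e^{-\gamma_1 s})$ and for $N=3$ one would need a short bootstrap in \eqref{eqn-w3} to make $I_2<\infty$; but this alternative is not needed, as your primary Barenblatt argument is complete.
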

 
\begin{proof} 
In the cases where $A_1 \neq 0$, the Proposition  follows  from \eqref{eqn-I1}. In the cases where $A_1=0$, we have $\beta=\beta_1$ and the solution is given in closed form (Barenblatt solution) and admits  the fast behavior $w(s) =- B \, e^{-2s} (1+o(1))$. 
\end{proof}

\begin{proof}[Proof of Theorem \ref{thm-ss}]
The proof of Theorem  \ref{thm-ss} immediately follows from  Proposition \ref{prop-cyl} once we write $v(s) = 1 + w(s)$ and express everything in polar coordinates on $\mathbb{R}^N$.
\end{proof}

\subsection{Asymptotics of the singular  self-similar solutions $V_{\beta,B}$}
Our goal in this subsection is to prove Theorem \ref{thm-ss-sing}.  Before showing the precise asymptotics  of those singular self-similar solutions at infinity we will first briefly comment on their  existence. Since this is pretty standard we will omit the details and give the references in which the details can be found.

\begin{lem}
\label{lem-V-B}
For every $\beta > \beta_1$ and $T >0$,  there exists  one parameter family of self-similar solutions 
$V_{\beta,B}(x,t) = (T-t)^{\alpha} g_{\beta,B}(x(T-t)^{\beta})$ with profile function  $g_{\beta,B}(y)$ satisfying  \eqref{eqn-orig} with $B >0$ and $K_B >0$ (depending on $B$) so that 
\begin{equation}\label{eqn-cccc}
g(y)= \left ( \frac{C^*}{|y|^2} \right )^{1/n} (1 + o(1)), \quad \mbox{as} \,\, |y| \to +\infty
\end{equation}
\end{lem}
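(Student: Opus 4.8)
The plan is to reduce the radial form of \eqref{eqn-ell} to the autonomous traveling-wave equation \eqref{eqn-vv} and to carry out the phase-plane analysis of \cite{V}, Section 3.2.2, and the formal study in \cite{King}. After the cylindrical change of variables \eqref{eqn-cylcor}, a radial self-similar profile $g=g(\eta)$, $\eta=|y|$, corresponds to a solution $v=v(s)$ of \eqref{eqn-vv} with $s=\log\eta$, where $v$ and $g$ are related, up to the normalizing constant used in deriving \eqref{eqn-v}, by $v=\eta^{2m/n}g^m$. In the phase plane $(v,v_s)$ the associated first-order system has exactly two finite critical points, the origin $(0,0)$ and the cylinder $(1,0)$, the latter being the constant solution $v\equiv1$ that produces the cylindrical behavior \eqref{eqn-cccc}. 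The profile $g_{\beta,B}$ we seek corresponds to an orbit with $(v,v_s)\to(1,0)$ as $s\to+\infty$ (that is, $|y|\to\infty$) and $v\to+\infty$ as $s\to-\infty$ (that is, $|y|\to0$).

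I would first describe the singular end. As $v\to+\infty$ the dominant balance in \eqref{eqn-vv} is between the convective term $\beta(p-1)v^{p-1}v_s$ and the reaction term $v^p$, the diffusive term $\bar\alpha^{-1}v_{ss}$ being of lower order; this forces $v_s\sim -v/(\beta(p-1))$ and hence $v\sim c\,e^{as}$ with $a=-1/(\beta(p-1))=-m/(n\beta)$ and $c>0$ a free constant. Since $v=\eta^{2m/n}g^m$, a direct computation gives $g(\eta)\sim K_B\,\eta^{-\alpha/\beta}$ as $\eta\to0$, which is exactly \eqref{eqn-orig}, with $K_B$ determined by $c$. This produces a one-parameter family of orbits issuing from the singular end, with $c$ (equivalently $K_B$) playing the role of the shooting parameter.

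The heart of the argument is the connection step: I would show that each such orbit is trapped in the region $\{v>1,\ v_s<0\}$ and converges monotonically to $(1,0)$ as $s\to+\infty$. The hypothesis $\beta>\beta_1$ (equivalently $N\beta>\alpha$) is precisely what yields the correct local structure at the cylinder: linearizing \eqref{eqn-vv} at $v=1$ gives the characteristic equation \eqref{eqn-eqgamma}, whose roots $\gamma_1,\gamma_2$ are real and positive once $\beta>\beta_0$, so the approach is non-oscillatory, while the keeping of the orbit above $v=1$ matches the ordering $\C<V_{\beta,B}$ recorded in Proposition \ref{cor-mon}. I expect this connection to be the main obstacle, since one must rule out the competing behaviors, namely finite-$s$ escape, convergence to $(0,0)$, or oscillatory crossing of $v=1$; I would handle it by exhibiting an explicit invariant trapping region and combining it with the stable-manifold analysis at $(1,0)$, exactly as carried out in \cite{V}.

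Finally, I would generate the full one-parameter family from the scaling invariance of \eqref{eqn-ell}. As recorded in Section \ref{sec-scal}, the map $g\mapsto\lambda^{2/n}g(\lambda\,\cdot)$ preserves the leading cylindrical constant $C^*$ in \eqref{eqn-cccc} while rescaling the origin constant $K_B$ and the second-order constant according to $\bar B_\lambda=\bar B_0\lambda^{-\gamma}$; hence a single connecting orbit generates the family $g_{\beta,B}$, $B>0$. The value of $C^*$ appearing in \eqref{eqn-cccc} is forced by the fixed-point normalization $v\equiv1$ used in deriving \eqref{eqn-v}, which completes the verification of \eqref{eqn-cccc} and the proof.
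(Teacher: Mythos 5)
Your reduction to the traveling-wave equation \eqref{eqn-vv} and your treatment of the singular end are sound: the dominant balance $\beta(p-1)v^{p-1}v_s + v^p \approx 0$ correctly reproduces $g(\eta)\sim K_B\,\eta^{-\alpha/\beta}$ as $\eta\to 0$, which is exactly the local behavior the paper extracts from the $(X,Y)$ system \eqref{eq-aut-sys} via $X(-\infty)=-\alpha/\beta$, and the scaling step generating the family is fine. The genuine gap is in the connection step, which you yourself flag as the main obstacle. The region $\{v>1,\ v_s<0\}$ is \emph{not} forward-invariant: on the side $\{v=1,\ v_s<0\}$ the flow simply exits, and nothing in the ODE prevents an orbit from crossing the constant solution $v\equiv 1$, since two solutions of a second-order ODE may intersect so long as their derivatives differ there. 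Keeping the orbit above the cylinder is really a non-crossing statement for two distinct radial \emph{weak} solutions of the elliptic equation \eqref{eqn-ell} on balls containing the origin, which rests on uniqueness of the Dirichlet problem in the regime $N\beta>\alpha$ --- a PDE fact, not a phase-plane fact; and invoking the ordering of Proposition \ref{cor-mon} here would be circular, since its proof presupposes the existence of $g_{\beta,B}$. Deferring the rest to ``exactly as carried out in \cite{V}'' misses precisely the point the paper itself emphasizes: the one place its analysis \emph{differs} from \cite{V} is the exclusion of the orbit ending at the spherical critical point $C=(-(N+2),0)$ (in your $(v,v_s)$ coordinates this behavior, $g\sim|y|^{-(N+2)}$, collapses together with the point $E$ into the single critical point $(0,0)$, which is one reason the $(X,Y)$ variables are better adapted). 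The paper rules it out by a parabolic argument with no ODE analogue: if $g$ had spherical decay, then $V_{\beta,B}$ and the spherical self-similar solution $U_s$ extinct at the same time $T$ would satisfy, by the $L^1$-contraction principle, $\int_{\R^N}|g_{\beta,B}-f_s|\,dy \le C(T-t)^{\beta N-\alpha}\to 0$ as $t\to T$ (this is where $\beta>\beta_1$, i.e.\ $\beta N>\alpha$, enters), forcing $g_{\beta,B}\equiv f_s$, a contradiction.

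A second omission: you list ``finite-$s$ escape'' among the behaviors to be excluded but give no mechanism for doing so. The paper secures global existence by two-sided barriers before any asymptotic identification: the lower bound $g>f_{\beta,B}$ via the non-crossing/uniqueness argument just described, and an upper bound via the Aronson--Benil\'an inequality \cite{AB} applied to the ancient solution $V_{\beta,B}$, which gives $\Delta g^m\le 0$ and hence $g^m(r)\le C(1+r_0^{2-N})$ after two integrations. Without these, the shooting orbit from the singular end is not even known to reach $s=+\infty$. So while your overall architecture (autonomous reduction, shooting from the origin, connecting to the cylinder, rescaling to generate the family) parallels the paper's, the two load-bearing steps --- global trapping between barriers, and exclusion of spherical decay via $L^1$-contraction --- are exactly the ones your invariant-region sketch does not close.
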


\begin{proof}
We will  show the existence of a radial solution $g_{\beta,B}(r)$, $r=|y|$  of equation \eqref{eqn-ell} on $\R^N$  such that $g_{\beta,B}(r)$ satisfies \eqref{eqn-orig} and \eqref{eqn-cccc}. Omit for simplicity the subscripts $\beta, B$ in the proof of this Lemma, but keep in mind that $\beta > 0$ has been fixed. Rewrite the equation \eqref{eqn-ell}  in the following form
\begin{equation}
\label{eq-other-form}
\frac 1m r^{1-N} (r^{N-1} g(r)^{m-1} g'(r))' + \beta  y g'(r) + \alpha g(r) = 0.
\end{equation}
As in \cite{V} we  introduce the following change of variables
$$y = e^s, \qquad X(s) := \frac{r g'}{g}, \qquad Y(s) = y^2 g^{1-m}.$$
Then \eqref{eq-other-form} is equivalent to the following autonomous system of ODEs
\begin{equation}
\begin{split}
\label{eq-aut-sys}
\dot{X} &= (2-N) X - mX^2 - m(\alpha + \beta X)Y \\
\dot{Y} &= (2+ (1-m)X)Y. 
\end{split}
\end{equation}
Set $\theta:= - \alpha /\beta$. This autonomous system has a local solution with
$$X(-\infty) = -\theta, \qquad Y(s) \sim e^{s(-\theta (1-m) + 2)}, \quad \mbox{as} \,\,\, s\to -\infty.$$
These are equivalent to saying that initial data satisfy $g(r) \sim r^{-\theta}$ as $y\to 0$. Take any self-similar profile $f_{\beta,B}$ of a smooth self-similar solution $U_{\beta,B}$ (as in Theorem \ref{thm-ss}).  Then $\lim_{r\to 0} g(r) > f_{\beta,B}(0)$. We claim this implies 
\begin{equation}
\label{eq-lower-g}
g(r) > f_{\beta,B}(r)
\end{equation}
for all $r$ as long as the solution $g$ exists. Indeed, if that were not true, there would exist an $r_0$ so that (since our solutions are radially symmetric)  $g(r_0) = f_{\beta,B}(r_0)$. Since both, $g$ and $f_{\beta,B}$ are weak solutions  of the same 
elliptic equation \eqref{eqn-ell},   by the uniqueness of the  Dirichlet problem on the  ball
$B_{r_0}(0)$ (which holds since $\beta > \beta_1$) we would have $g \equiv f_{\beta,B}$ on the same ball. This  is impossible
since $f_{\beta,B}$ is a smooth solution at the origin unlike the singular solution $g$.

On the other hand the Aronson Benil\'an inequality (\cite{AB}), applied to the ancient solution $V_{\beta,B}$ implies $\Delta V_{\beta,B}^m \le 0$. This means that $\Delta g^m \le 0$, which (if $|y| = r$) is equivalent to $(r^{N-1}(g^m)_r)_r \le 0$.  After integrating this inequality twice from some fixed $r_0 > 0$ to $r$  we get
\begin{equation}
\label{eq-upper-g}
g^m(r) \le C(1+ r_0^{2-N}).
\end{equation}
Combining the estimates \eqref{eq-lower-g} and \eqref{eq-upper-g} yields that our solution $g$ 
remains strictly positive and bounded for all  $r >0$  and therefore it defines the global solution to \eqref{eq-other-form}.

Almost the same phase-plane analysis as   in \cite{V} (see chapter 5)  implies  that by choosing the right orbit  
the solution  $g$ admits  the  cylindrical behavior 
$g(r)=(C^*|r|^{-2})^{1/n} (1 + o(1))$, as $r \to +\infty$. To see that we find that the critical points of our system \eqref{eq-aut-sys} are
$$E := (0,0), \qquad C:= (-(N+2), 0), \qquad D:= (-2/(1-m), (N-2)/m).$$
The only difference from the analysis in \cite{V} is that we need to exclude the case that the orbit ends at the critical point $C$. If that were to happen, as in \cite{V} we would get that our solution had the spherical behavior at infinity and therefore was in $L^1(\mathbb{R}^N)$. We argue by contradiction that this is not possible. Assume 
$V_{\beta,B}(x,t) = (T-t)^{\alpha} g_{\beta,B}(x(T-t)^{\beta})$ and $g_{\beta,B}$  satisfies \eqref{eqn-orig} and $g(y) \sim |y|^{-(N+2)}$ as $|y| \to\infty$. Let $U_{s}(x,t) := (T-t)^{\alpha} f_s(x(T-t)^{\beta})$ be  the spherical solution that becomes extinct at time $T$ as well. By the $L^1$ contraction principle we have
$$\int_{\mathbb{R}^N} |V_{\beta,B}(x,t) - U_s(x,t)|\, dx \le \int_{\mathbb{R}^N} |V_{\beta,B}(x,0) - U_s(x,0)|\, dx \le C < \infty,$$
where $C > 0$ is a uniform constant. This implies that 
$$\int_{\mathbb{R}^N} |g_{\beta,B}(y) - f_s(y)|\, dy \le C (T-t)^{\beta N - \alpha} \to 0,$$
as $t\to T$, forcing $g_{\beta,B} \equiv f_s$, which is impossible. We have used here our assumption that $\beta > \beta_1$ (or equivalently $\beta N > \alpha$). 
Similarly as in \cite{V} we conclude the orbit must end at $D$, hence as $s\to \infty$ we must have $X \to -2/n$, which implies the asymptotic  behavior \eqref{eqn-cccc}. 
\end{proof}

By Lemma \ref{lem-V-B} we already know the existence of the singular self-similar solutions 
$V_{\beta,B}$ for which their profile $g_{\beta,B}$ satisfies  \eqref{eqn-orig} and  
\eqref{eqn-cccc}. To complete the proof of Theorem \ref{thm-ss-sing} it is therefore enough to show \eqref{eqn-asympt2}. For this purpose assume $\beta \ge \beta_1$. 
\begin{proof}[Proof of Theorem \ref{thm-ss-sing}]
We adopt the same notation as in subsection \ref{sec-Ub}. Using the variation of parameters formula we can write $w$ as
$$w(s) = \frac{1}{\gamma_2 - \gamma_1}\left( e^{-\gamma_1 s}\, \int_{s_0}^s e^{\gamma_1 t} f(t)\, dt - e^{-\gamma_2 s}\int_{s_0}^s e^{\gamma_2 t} f(t)\, dt\right).$$
Note that we are not able to integrate from $-\infty$ as in the proof of Theorem \ref{thm-ss}  but instead do so from some finite $s_0$, the reason being that the singular behavior of $V_{\beta,B}$ at the origin implies that the above integrals are near  $-\infty$.
Integration by parts yields 
$$w(s) = C_N \left( e^{-\gamma_1 s} \big( \beta \phi(s_0) e^{\gamma_1 s_0} - A_1 \int_{s_0}^s e^{\gamma_1 t}\phi(t)\, dt\big) - e^{-\gamma_2 s} \big( \beta \phi(s_0) e^{\gamma_2 s_0} + A_2 \int_{s_0}^s e^{\gamma_2 t}\phi(t)\, dt \big)\right).$$
Recall that for $\beta \ge \beta_1$ we have $A_2 \le 0 \le A_1$. By Corollary \ref{cor-mon} we have $w(s) > 0$ for all $s\in (-\infty,\infty)$ in the considered case. This, together with $\phi \ge 0$ (as we showed  in the proof of Lemma \ref{lem-www}), implies
\begin{equation}\label{eqn-ws5}
0 \le w(s) \le C(s_0) e^{-\gamma_1 s} \qquad \mbox{for all} \,\,\, s.
\end{equation}
As in the proof of Lemma \ref{lem-www33}  we can  now argue that, if $I(s_0) \neq 0$ for some $s_0 \in \R$, then 
$$w(s) =C_N \, I(s_0) e^{-\gamma_1 s} (1 + o(1)),$$
where $I(s_0) := \beta\phi(s_0)\,  e^{\gamma_1 s_0} - A_1 \int_{s_0}^{\infty} e^{\gamma_1 t}\phi(t)\, dt$. 
Since $w >0$ it follows that $I(s_0) \ge 0$. If $I(s_0) > 0$ for  some $s_0$  we are done.  Otherwise, we must have 
$$\beta e^{\gamma_1 s}  \phi(s) = A_1 \int_s^{\infty} e^{\gamma_1 t}\phi\, dt, \qquad \forall s \in (-\infty, +\infty)$$
implying that 
\begin{equation}
\label{eq-beh1}
\phi(s) = C e^{-\big(\gamma_1 + \frac{A_1}{\beta}\big) s}.
\end{equation}
On the other hand, $\phi(s) \sim w^p(s)$ as $s\to -\infty$  and, by  using the change between radial and cylindrical coordinates and the behavior $g_{\beta,B}(y) \sim |y|^{-\alpha/\beta}$ as $|y| \to  0$,  we obtain 
$w(s) \sim e^{-\left(\frac{\alpha}{\beta} - \frac{1}{p-1}\right)\, s}$ as $ s\to -\infty.$
This together with \eqref{eq-beh1} and the definition of $A_1$ would imply that 
$$\left(\frac{\alpha}{\beta} - \frac{1}{p-1}\right) p = \gamma_1 + \frac{A_1}{\beta}=\frac p{(p-1)\beta}.$$
A direct calculation shows that this  is equivalent to $b=-4/(N+6)$, which is impossible since $b$ is always positive. 
This means that $I(s_0) \neq 0$ for some $s_0$ finishing our proof. 

\end{proof}


\section{Cylindrical Behavior of evolving metrics  at infinity}
\label{cyl-beh}

Recall that the cylindrical self-similar solution to \eqref{eqn-fd} is given by \eqref{eqn-cyl2} and  becomes  extinct at time $T$. Both self similar solutions  $U_{\beta,B}$ and $V_{\beta,B}$ whose second-order asymptotics have been discussed in section \ref{sec-asymp} 
have cylindrical behavior at infinity and they both become extinct at the time when their cylindrical tails become extinct. These suggest that the cylindrical tail of any solution $u$ that satisfies \eqref{eqn-u0infty} will become extinct at time $T$, as shown   in the following proposition.

\begin{prop}\label{prop-tail}
Let $u$  be a nonnegative weak  solution of \eqref{eqn-u} with initial data $u_0 \in L^\infty (\R^N)$ satisfying \eqref{eqn-cyl2} with $C^*$ given by \eqref{eqn-cstar} and $n=1-m$. 
Then, for  every $t\in [0,T)$ we have
$$u(x,t) = \left(\frac{C^*(T-t)}{|x|^2}\right)^{\frac 1n} (1+o(1)).$$
\end{prop}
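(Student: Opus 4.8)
The plan is to establish the cylindrical behavior \eqref{eqn-tail0} by a barrier argument that traps the solution $u$ between two explicit cylindrical-type supersolutions and subsolutions, exploiting the comparison principle for the fast diffusion equation \eqref{eqn-u}. The central idea is that the decay hypothesis \eqref{eqn-u0infty} on the initial data, combined with the fact that the cylindrical solution $\C(x,t)$ given by \eqref{eqn-cyl2} is an \emph{exact} solution which becomes extinct precisely at time $T$, should force $u$ to inherit the same leading-order spatial decay for all $t \in [0,T)$. Since the statement is an asymptotic as $|x|\to\infty$ with an $o(1)$ error, I expect to prove matching upper and lower bounds of the form $(1-\e)^{1/n}\,\C(x,t) \le u(x,t) \le (1+\e)^{1/n}\,\C(x,t)$ outside a large ball, for every $\e>0$ and $t$ bounded away from $T$, which is exactly what $u = \C\,(1+o(1))$ encodes.

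First I would record the comparison principle: for nonnegative weak solutions of \eqref{eqn-u} with ordered initial data, the ordering is preserved in time. Then I would construct, for fixed small $\e>0$, barriers adapted to the slightly-rescaled cylinders. Because $\C(x,t)=(C^*(T-t)/|x|^2)^{1/n}$ solves the equation exactly, a natural family of comparison functions is obtained by shifting the extinction time, i.e. considering $\C_{T\pm\delta}(x,t) = (C^*(T\pm\delta - t)/|x|^2)^{1/n}$, or by multiplying by a constant factor $(1\pm\e)^{1/n}$ and checking the sign of the resulting supersolution/subsolution defect. The hypothesis \eqref{eqn-u0infty} guarantees that for any prescribed $\e$ there is a radius $R_\e$ beyond which $u_0$ lies between the two modified cylinders; the behavior on the compact annulus and ball $\{|x|\le R_\e\}$ must then be controlled by adding a slowly-varying correction or by using the boundedness $u_0 \in L^\infty$ together with the well-known smoothing and finite-propagation-free (infinite speed) properties of fast diffusion. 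Applying comparison on the exterior domain, with careful treatment of the lateral boundary $|x|=R_\e$ where one compares $u$ against the barrier on the parabolic boundary, yields the two-sided bound.

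The key steps, in order, are: (i) state and invoke the comparison/maximum principle for \eqref{eqn-u}; (ii) use \eqref{eqn-u0infty} to sandwich $u_0$ between $(1\mp\e)^{1/n}\C_0$ outside a large ball $B_{R_\e}$; (iii) verify that the perturbed cylinders $(1\pm\e)^{1/n}\C$, or the time-shifted cylinders, are genuine super/sub-solutions of \eqref{eqn-u} on the exterior region (this is where the algebra of the fast-diffusion scaling is checked, and where the constant $C^*$ from \eqref{eqn-cstar} is used crucially so that the pure cylinder is an exact solution); (iv) handle the inner boundary $|x|=R_\e$, where one must ensure the barrier dominates (resp.\ is dominated by) $u$ on the whole parabolic boundary for $t\in[0,t_0]$ with $t_0<T$, possibly by enlarging $R_\e$ as $\e\to 0$; (v) let $\e\to 0$ to extract the $o(1)$ statement uniformly on compact time intervals $[0,t_0]\subset[0,T)$.

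The main obstacle will be step (iv), controlling the contribution near the inner boundary $|x|=R_\e$, because on the compact set $\{|x|\le R_\e\}$ the solution $u$ need not be close to the cylinder and the cylinder itself is singular at the origin. I expect to circumvent this by comparing on an exterior annulus $\{R_\e \le |x| \le R'\}$ with $R'$ large, and showing that the difference contributed by the lateral inner boundary is absorbed into the $\e$-margin once $R_\e$ is chosen large enough relative to $\e$; alternatively, one shifts the extinction time in the barrier (replacing $T$ by $T\mp\delta$) so that the modified cylinder strictly dominates $u$ on the inner boundary on a short time interval, then iterates. A secondary technical point is justifying the comparison principle on an unbounded domain for weak $L^\infty$ solutions, which I would reduce to the standard theory in \cite{HP} by an exhaustion/approximation argument. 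Once these boundary issues are settled, the two-sided sandwich and the limit $\e\to 0$ deliver the claimed asymptotics.
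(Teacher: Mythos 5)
There is a genuine gap, and it sits exactly where you anticipated trouble, at steps (iii) and (iv). First, the multiplicative barriers $(1\pm\e)^{1/n}\,\C$ are super/sub-solutions in the \emph{wrong} directions. Writing $v=\lambda\,\C$ and using that $\C$ is an exact solution with $\C_t<0$, one gets
\[
v_t-\Delta v^m=(\lambda-\lambda^m)\,\C_t ,
\]
so for $\lambda>1$ (since $0<m<1$) the factor $\lambda-\lambda^m$ is positive and $v$ is a \emph{sub}solution lying \emph{above} $u$ at infinity, while for $\lambda<1$ it is a \emph{super}solution lying \emph{below}; neither can be used to start a comparison, so the multiplicative sandwich cannot be closed. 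That leaves your time-shifted cylinders $\C_{T\pm\delta}$, which are exact solutions, but here the second problem bites: for the lower bound, $\C_{T-\delta}(\cdot,0)$ is singular at the origin and hence lies \emph{above} any bounded $u_0$ there, so global initial ordering fails; for the upper bound on an exterior region $\{|x|>R_\e\}$ you need $u\le \C_{T+\delta}$ on the lateral boundary $|x|=R_\e$ for \emph{all} $t$, which is precisely what you are trying to prove (at the inner boundary $u$ is a priori of the same order $R_\e^{-2/n}$ as the barrier, so there is no $\e$-margin to absorb, and enlarging $R_\e$ only shrinks the barrier there). Your proposed fixes --- ``absorb into the $\e$-margin'' or ``then iterates'' --- do not supply a mechanism that breaks this circularity.

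The paper closes both issues by replacing the cylinder with the Barenblatt family \eqref{eqn-ub} and its singular exterior counterpart, both \emph{exact} solutions. For the lower bound, after noting that $u$ is smooth and strictly positive before extinction (so WLOG $u_0>0$ is continuous), one places the bounded, $k$-fattened Barenblatt $\bigl(C^*(T-\epsilon-t)/(|x|^2+k^2(T-\epsilon-t)^{-2\beta_1})\bigr)^{1/n}$ \emph{globally} below $u_0$ --- the $+k^2$ term caps the cylinder at the origin, curing exactly the failure of $\C_{T-\delta}$ --- yielding \eqref{eq-lower-T}. For the upper bound one uses the hollowed-out solution
\[
B^{\epsilon}_{k^-}(x,t)=\left(\frac{C^*(T+\epsilon-t)}{|x|^2-k^2(T+\epsilon-t)^{-2\beta_1}}\right)^{1/n},
\]
an exact solution on the shrinking exterior region $\{|x|>k(T+\epsilon-t)^{-\beta_1}\}$ which is \emph{identically infinite on the moving lateral boundary}; comparison there is automatic, so the inner-boundary obstacle never has to be estimated at all (the moving domain is fixed by the rescaling in \eqref{eq-tilde-u-00}, and $k$ is chosen so the initial inner radius exceeds the $r_0$ furnished by \eqref{eqn-u0infty}). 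Letting $\epsilon\to 0$ in the two bounds gives the proposition. In short: your overall barrier-and-time-shift strategy is the right instinct, but the specific barriers you name either have the wrong defect sign or cannot be ordered against $u_0$ globally, and the missing idea is the one-parameter Barenblatt deformations $|x|^2\pm k^2(\cdot)^{-2\beta_1}$ that respectively regularize the cylinder at the origin and blow up on the inner boundary.
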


Before we prove this Proposition we will show that the extinction time of any  solution $u$ with initial data  $u_0 \in L^1_{loc}(\R^N)$ satisfying \eqref{eqn-cyl2}
is at least $T$.

\begin{lem}
\label{lem-vanishing-time}
Let $u$ be a  solution   of \eqref{eqn-u}  as in Proposition \ref{prop-tail}. Then,  its extinction time  $T^*$ satisfies  $T^* \ge T$.
\end{lem}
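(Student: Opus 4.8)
The plan is to prove $T^* \ge T$ by placing below $u$ a bounded self-similar subsolution that becomes extinct at a time $T'$ arbitrarily close to $T$, and then letting $T'\uparrow T$. Since I am free to choose the comparison solution, the most convenient one is the explicit Barenblatt profile \eqref{eqn-ub} (the case $\beta=\beta_1$, $\alpha=\beta N$): for $T'\in(0,T)$ and $\lambda>0$ set
\[
\B^{T'}_\lambda(x,t) := (T'-t)^{\alpha}\,\B_\lambda\big(x\,(T'-t)^{\beta_1}\big),
\]
which is a smooth, bounded, radially decreasing solution of \eqref{eqn-u} that becomes extinct exactly at $t=T'$, has the cylindrical tail $\B^{T'}_\lambda(x,0)=(C^*T'/|x|^2)^{1/n}(1+o(1))$ as $|x|\to\infty$, and satisfies $\B^{T'}_\lambda<\C^{T'}$ pointwise, where $\C^{T'}(x,t)=(C^*(T'-t)/|x|^2)^{1/n}$. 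If I can arrange $\B^{T'}_\lambda(\cdot,t_0)\le u(\cdot,t_0)$ at a single time $t_0$, the comparison principle gives $u\ge\B^{T'}_\lambda$ on $[t_0,T')$, so $u(\cdot,t)\not\equiv0$ for $t<T'$ and hence $T^*\ge T'$; letting $T'\uparrow T$ yields $T^*\ge T$.

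The ordering at infinity is immediate: by \eqref{eqn-u0infty}, for any $\eps\in(0,1)$ there is $R_\eps$ with $u_0(x)\ge(1-\eps)(C^*T/|x|^2)^{1/n}$ for $|x|\ge R_\eps$, and choosing $T'<T$ with $(T')^{1/n}<(1-\eps)\,T^{1/n}$ forces $\B^{T'}_\lambda(x,0)<\C^{T'}(x,0)=(C^*T'/|x|^2)^{1/n}<(1-\eps)(C^*T/|x|^2)^{1/n}\le u_0(x)$ for all $|x|\ge R_\eps$ and every $\lambda>0$; the strict gap comes purely from the leading coefficients, so the unquantified $o(1)$ in \eqref{eqn-u0infty} and the second-order term of $\B^{T'}_\lambda$ are harmless. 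The delicate point is the ball $\{|x|\le R_\eps\}$, where $u_0$ is only nonnegative and may vanish while $\B^{T'}_\lambda(\cdot,0)$ is strictly positive, so no choice of $\lambda$ can repair the ordering at $t=0$. This is exactly why I compare from a positive time: by the smoothing and strong positivity of the fast-diffusion flow (\cite{HP}), for every $t_0\in(0,\min(T',T^*))$ the function $u(\cdot,t_0)$ is continuous and strictly positive, so $\delta_0:=\min_{|x|\le R_\eps}u(\cdot,t_0)>0$; since $\B^{T'}_\lambda(x,t_0)\le(T'-t_0)^{\alpha}\,\B_\lambda(0)=(T'-t_0)^{\alpha}(C^*/\lambda^2)^{1/n}\to0$ as $\lambda\to\infty$, taking $\lambda$ large gives $\B^{T'}_\lambda(\cdot,t_0)\le\delta_0\le u(\cdot,t_0)$ on $\{|x|\le R_\eps\}$.

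The main obstacle is to secure the exterior ordering $\B^{T'}_\lambda(\cdot,t_0)\le u(\cdot,t_0)$ on $\{|x|>R_\eps\}$ at the positive time $t_0$: positivity alone is qualitative and does not transport the cylindrical lower bound from $t=0$ to $t=t_0$, while a naive parabolic comparison on the exterior domain founders because $u$ is not controlled from below on the fixed sphere $\{|x|=R_\eps\}$ for $t>0$. I would overcome this by bounding $u_0$ from below by the zero-extension of the solution of the exterior Dirichlet problem on $\{|x|>R_\eps\}$ with datum $(1-\eps)(C^*T/|x|^2)^{1/n}$ and vanishing boundary values; this zero-extension is a genuine subsolution of \eqref{eqn-u} (the interface $\{|x|=R_\eps\}$ contributes a favourably signed convex kink in $\underline u^{\,m}$), lies below $u_0$ and hence below $u$, and its infinite-mass cylindrical tail prevents it from becoming extinct before $T$, thereby furnishing a quantitative cylindrical lower bound for $u(\cdot,t_0)$ at infinity. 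Equivalently, one may invoke the local lower (Harnack / Aronson--Caffarelli type) estimates for \eqref{eqn-u}, which convert the non-integrable tail of $u_0$ into a cylindrical lower bound for $u(\cdot,t_0)$. Once this exterior bound is available it combines with the interior bound of the previous paragraph to give $\B^{T'}_\lambda(\cdot,t_0)\le u(\cdot,t_0)$ on all of $\R^N$, and the comparison argument closes as above.
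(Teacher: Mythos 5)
Your core mechanism is exactly the paper's: slide a Barenblatt profile \eqref{eqn-ub} with extinction time $T'<T$ underneath the solution, apply the comparison principle, and let $T'\uparrow T$. Your exterior ordering at $t=0$ and your use of instantaneous positivity to handle the interior are fine as far as they go. The paper, however, never compares at a positive time: it invokes the well-known smoothing and strict positivity of bounded solutions to assume \emph{from the outset} that $u_0$ is continuous and strictly positive on $\R^N$, and then fits the single barrier $\bigl(C^*(T-\epsilon)/(|x|^2+k^2)\bigr)^{1/n}$ under $u_0$ at time zero --- $k$ large kills the barrier on compact sets where $\min u_0>0$, while the gap between $T-\epsilon$ and $T$ handles the tail --- after which comparison gives \eqref{eq-lower-T} and $T^*\ge T-\epsilon$ at once. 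So the delicate point you isolate (data that vanish on a compact set) is dispatched in the paper by a one-line reduction at the level of initial data, not by your exterior construction.

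The genuine gap is in your third paragraph. What you need at time $t_0$ is the quantitative bound $u(x,t_0)\ge (1+o(1))\bigl(C^*(T'-t_0)/|x|^2\bigr)^{1/n}$ as $|x|\to\infty$, with essentially the sharp constant, because the tail of $\B^{T'}_\lambda(\cdot,t_0)$ is \emph{insensitive to $\lambda$}: increasing $\lambda$ flattens the Barenblatt on compact sets but leaves its asymptotics $\bigl(C^*(T'-t_0)/|x|^2\bigr)^{1/n}$ untouched. Your justification --- the zero-extension subsolution has infinite mass, hence does not become extinct before $T$, ``thereby furnishing a quantitative cylindrical lower bound at infinity'' --- is a non sequitur: non-extinction yields strict positivity, not a decay rate $|x|^{-2/n}$ with a constant close to $\bigl(C^*(T'-t_0)\bigr)^{1/n}$. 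Establishing that the exterior Dirichlet solution retains its cylindrical tail after the zero boundary condition is switched on at $|x|=R_\eps$ is precisely a lower-bound instance of the tail-persistence statement of Proposition \ref{prop-tail} --- whose proof in the paper relies on this very lemma through \eqref{eq-lower-T} --- so as written your route is circular. The fallback on Aronson--Caffarelli/Harnack-type local lower bounds also does not go through off the shelf: here $m=(N-2)/(N+2)$ lies below the critical exponent $m_c=(N-2)/N$, a range in which such intrinsic positivity estimates fail in their standard form (finite-time extinction from integrable data, bounds degenerating near extinction), and in any case they would not produce the near-sharp constant the Barenblatt tail forces on you. The correct repair is the paper's: justify once that one may take $u_0$ strictly positive and continuous, and then run your own comparison entirely at $t=0$, where your exterior estimate already works and the interior is handled by $\min_{|x|\le R_\eps}u_0>0$ together with a large choice of $\lambda$ (equivalently $k$).
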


\begin{proof}
It is well known that bounded solutions to  \eqref{eqn-u} with $u_0 \geq 0$ 
are $C^\infty$ smooth and strictly positive  up to their extinction time $T^*$. 
Hence, we may assume without loss of generality,   that the initial data $u_0$ 
are strictly positive and continuous  on $\R^N$. 
Let $\epsilon > 0$ be an arbitrarily small positive number.  By the  asymptotics given by \eqref{eqn-cyl2} and the positivity 
and continuity of $u_0$, it follows that we can choose $k >0$ to
imply that $$u_0(x) \ge \left(\frac{C^* (T - \epsilon)}{|x|^{2}+k^2}\right)^{1/n} \qquad \forall x\in \mathbb{R}^N.$$
By  comparison with the Barenblatt solutions \eqref{eqn-ub}  we have
\begin{equation}
\label{eq-lower-T}
u(x,t) \ge \left(\frac{C^* (T - \epsilon - t)}{|x|^{2}+k^2  (T - \epsilon - t)^{- 2 \beta_1}}\right)^{1/n} \qquad \forall (x,t)\in \mathbb{R}^N \times [0,T-\epsilon)
\end{equation}
where $\beta_1 := (N+2)/(2(N-2))$. This implies that the extinction time
$$T^* \ge T - \epsilon.$$
Since $\epsilon > 0$ is arbitrary, let $\epsilon \to 0$ above to conclude the proof of the Lemma.
\end{proof}

\begin{proof}[Proof of Proposition \ref{prop-tail}]
Let $\epsilon > 0$ be arbitary.
By \eqref{eq-lower-T} we have  
$$|x|^{2/n} \, u(x,t)  \ge \left ( \frac{C^* \, (T-\epsilon-t) \, |x|^2}{|x|^2+k^2  (T-\epsilon-t)^{- 2\beta_1}} \right )^{1/n}.$$
If we let $|x| \to \infty$ above we get
$\liminf_{|x|\to\infty} \, |x|^{2/n} \, u(x,t)  \ge \big (C^*\, (T-\epsilon-t) \big )^{1/n}$
and  by  letting  $\epsilon \to 0$ we obtain 
\begin{equation}
\label{eq-below}
\liminf_{|x|\to\infty} \, |x|^{2/n} \, u(x,t)  \ge \big (C^*\, (T-t) \big )^{1/n}. 
\end{equation}
On the other hand, it is easy to check that the function 
$$B_{k^-}(x,t) := \left(\frac{C^*(T-t)}{|x|^2 - k^2(T-t)^{- 2\beta_1}}\right)^{1/n}$$
 solves the equation \eqref{eqn-fd} on  $\{|x| > k(T-t)^{-\beta_1}\}\times [0,T)$. Let $\epsilon > 0$.  By our assumption on $u_0$ there exists an $r_0$ so that for all $|x| \ge r_0$  we have 
\begin{equation}
\label{eqn-BB-00}
u_0(x) \le \left(\frac{C^* (T+\epsilon)}{|x|^2}\right)^{1/n} \le \left(\frac{C^* (T+\epsilon)}{|x|^2 - k^2 
(T+\epsilon)^{- 2\beta_1}}\right)^{1/n}.
\end{equation}
Choose $k$ sufficiently large  that $k\, (T+\epsilon)^{-\beta_1} \ge r_0$ and  
set   
$$B_{k^-}^\epsilon (x,t) :=\left(\frac{C^* (T+\epsilon-t)}{|x|^2 - k^2 
(T+\epsilon-t)^{- 2\beta_1}}\right)^{1/n},$$
which solves   equation \eqref{eqn-fd} for   $(x,t) \in \{|x| > k\, (T+\epsilon -t)^{-\beta_1}\}\times [0,T)$.  
The rescaled functions 
$$\tilde  u (y,t) := u( (T+\epsilon-t)^{-\beta_1}\, y,t) \quad \mbox{and}  
\quad \tilde  B_{k^-}^\epsilon (y,t) := B_{k^-}^\epsilon ( (T+\epsilon-t)^{-\beta_1}\, y,t)$$
satisfy the equation 
\begin{equation}
\label{eq-tilde-u-00}
 \tilde{u}_t  = (T+\epsilon-t)^{ 2\beta_1} \Delta \tilde{u}^m + \beta_1 \, (T+\epsilon -t)^{-1}  \,  y \cdot  {\nabla \tilde u}
\end{equation}
on $Q_k:=\{|y| >  k \}  \times [0,T)$ and 
$\tilde{u}(y,0) \le \tilde{B}_{k^-}^{\epsilon}(y,0)$
for all $|y| >  k$, from \eqref{eqn-BB-00} and the choice $k\, (T+\epsilon)^{-\beta_1} \ge r_0$. Note also that for every $t\in [0,T)$ and for every $y_0$ with  $|y_0| = k$
$$\lim_{y\to y_0} \tilde{u}(y,t) \le \lim_{y\to y_0} \tilde{B}_{k^-}^{\epsilon}(y,t),$$
since the right hand side is infinite. 
The comparison principle applied to \eqref{eq-tilde-u-00}  on $Q_k$ yields that 
$\tilde{u}(y,t) \le \tilde{B}_{k^-}^{\epsilon}(y,t)$ on  $Q_k$, or equivalently 
 $u(x,t) \le B_{k^-}^{\epsilon}(x,t)$ on $\{|x| > k\, (T+\epsilon -t)^{-\beta_1}\}\times [0,T)$ implying the bound
$\limsup_{|x|\to\infty} |x|^{2/n} \, u(x,t)  \le \left( C^* \, (T+\epsilon - t) \right)^{1/n}.$
Letting $\epsilon \to 0$ yields
$$\limsup_{|x|\to\infty} |x|^{2/n} \, u(x,t)  \le \left( C^* \, (T- t) \right)^{1/n},$$
which  together with \eqref{eq-below} implies the statement  of the proposition.  
\end{proof}

\section{Extinction profile of  solutions to the Yamabe flow}\label{sec-conv}

We assume in this section that
\begin{itemize}
\item
either $N \ge 3$ and $\beta \ge \beta_1:=1/(2m)$, or 
\item
$N \ge 6$ and $\beta < \beta_1:=1/(2m)$ with $\beta > \beta_0:={2}/{\sqrt{N-2}}$.
\end{itemize}
Our main goal   is to prove Theorems \ref{thm-conv} and \ref{thm-conv2}. 

The assumption $u(x,0) \le U_{\beta,B_1}(x,0)$,  for some $B_1 > 0$,  which  is assumed to hold in both Theorems, and  
 the comparison principle imply the upper bound 
\begin{equation}
\label{eq-comp}
u(x,t) \le U_{\beta,B_1}(x,t) \qquad \mbox{on} \,\,\, \R^N \times (0,T). 
\end{equation}
In particular, $u$ vanishes at time $T$. The rescaled function $\bar u$ given by \eqref{eq-resc-left} satisfies the  rescaled equation \eqref{eq-resc100}.

\medskip

\subsection{The case  $\beta \ge \beta_1$}
In this section we will give the proof of  Theorem \ref{thm-conv}. We begin by  stating  the $L^1$-contraction property 
for  solutions to \eqref{eqn-u}, whose proof can be found in \cite{HP, V}.

\begin{lem}[$L^1$-contraction \cite{HP}, \cite{V}]
\label{lem-contraction}
For any two non-negative solutions $u_1$ and $u_2$ of \eqref{eqn-u} with initial data in $L^1_{loc}(\mathbb{R}^N)$,  
defined on a time interval $[0,T)$,    and any two times $t_1$ and $t_2$ such that $0 \le t_1 \le t_2 < T$, we have
$$\int_{\mathbb{R}^N} |u_1(x,t_2) - u_2(x,t_2)|\, dx \le \int_{\mathbb{R}^N} |u_1(x,t_1) - u_2(x,t_1)|\, dx.$$
\end{lem}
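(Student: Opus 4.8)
The plan is to reduce the statement to the differential inequality $\frac{d}{dt}\int_{\R^N}\abs{u_1-u_2}\,dx \le 0$ and to establish it by the standard duality (sign-approximation) argument, exploiting that $s\mapsto s^m$ is strictly increasing on $[0,\infty)$. Writing $w:=u_1-u_2$ and $z:=u_1^m-u_2^m$, subtracting the two equations gives $w_t=\Delta z$ in the weak sense, and the key elementary observation is that $\sign(w)=\sign(z)$ (and $w\,\sign(w)=\abs{w}$) since $0<m<1$ forces $u\mapsto u^m$ to be monotone.

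First I would choose a family of smooth, bounded, nondecreasing approximations $p_\e$ of $\sign$, with $p_\e'\ge 0$ and $p_\e\to\sign$ pointwise as $\e\to 0$. Formally testing $w_t=\Delta z$ against $p_\e(z)$ and integrating by parts yields
\[
\int_{\R^N} w_t\, p_\e(z)\,dx = -\int_{\R^N} p_\e'(z)\,\abs{\D z}^2\,dx \le 0,
\]
the sign being forced by $p_\e'\ge 0$ regardless of the (limited) regularity of $z$. Passing to the limit $\e\to 0$ converts the left-hand side into $\frac{d}{dt}\int_{\R^N}\abs{w}\,dx$, and integrating in time over $[t_1,t_2]$ gives the claimed $L^1$-contraction. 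Note that no structural property of the nonlinearity beyond monotonicity is used, so the argument is insensitive to the precise value of $m$.

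The main obstacle, and where essentially all the work lies, is making this computation rigorous for weak solutions with only $L^1_{loc}$ data on the unbounded domain $\R^N$. Two points must be handled. First, the limited regularity of $z$: I would run the argument on the smooth, strictly positive approximating solutions available for bounded data and then pass to the limit, as in the construction of the weak solution in \cite{HP}. Second, the absence of a boundary: I would insert a spatial cutoff $\zeta_R$ with $\zeta_R\equiv 1$ on $\abs{x}\le R$, $\supp\zeta_R\subset\{\abs{x}\le 2R\}$, and $\abs{\Delta\zeta_R}\le C R^{-2}$, test instead against $\zeta_R\, p_\e(z)$, and control the resulting commutator term $\int_{\R^N} P_\e(z)\,\Delta\zeta_R\,dx$ (with $P_\e'=p_\e$, so $P_\e(z)\to\abs{z}$ as $\e\to 0$). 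Showing this term vanishes as $R\to\infty$ uses the decay of the solutions at infinity, which for the solutions considered here is supplied by the cylindrical tail together with the factor $R^{-2}$. Once the boundary contribution is killed in the appropriate order of limits, the inequality above survives and yields the contraction; for the precise approximation scheme I would defer to \cite{HP} and \cite{V}.
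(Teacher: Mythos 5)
The paper offers no proof of this lemma --- it is quoted verbatim from \cite{HP} and \cite{V} --- so the question is whether your sketch would actually reconstruct one, and the answer is no: the single step you spell out in detail is the step that fails in this range of exponents. You claim the commutator term $\int_{\R^N} P_\e(z)\,\Delta\zeta_R\,dx$ vanishes as $R\to\infty$ ``using the decay of the solutions at infinity \dots supplied by the cylindrical tail together with the factor $R^{-2}$.'' First, the lemma hypothesizes only nonnegative solutions with $L^1_{loc}$ data; no cylindrical tail is assumed, so invoking it imports an assumption foreign to the statement. Second, even granting the tail, the arithmetic does not work: with $m=(N-2)/(N+2)$ one has $u_i^m\sim c\,|x|^{-(N-2)/2}$, hence $|P_\e(z)|\le |z|\lesssim R^{-(N-2)/2}$ on the annulus $\{R\le|x|\le 2R\}$, and the commutator term is of size $R^{-2}\cdot R^{N}\cdot R^{-(N-2)/2}=R^{(N-2)/2}\to\infty$ for $N\ge 3$. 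The robust alternative $|z|\le|w|^m$ (concavity of $s\mapsto s^m$) plus H\"older yields a correction of order $(t_2-t_1)\,R^{N(1-m)-2}$, and here $N(1-m)-2=2(N-2)/(N+2)>0$: precisely because $m<(N-2)/N$, the local contraction estimate does not globalize by letting $R\to\infty$. This is not a technicality --- it is the regime in which mass is not conserved and can escape to infinity, so no argument resting on generic decay plus $|\Delta\zeta_R|\le CR^{-2}$ can close.

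What an actual proof requires, and what the cited sources contain, is one of the following. Either one goes through the accretivity route: B\'enilan's lemma that $\int \Delta f\,\sign(f)\,dx\le 0$ whenever $f,\Delta f\in L^1(\R^N)$, applied to $f=u_1^m-u_2^m$ --- note this needs integrability of the difference of the \emph{powers}, which is not implied by $u_1-u_2\in L^1$ --- together with the nontrivial identification of weak solutions with mild (semigroup) solutions, which is the real content of \cite{HP}. Or one approximates by Dirichlet problems on balls with \emph{identical} boundary data, where the flux term $\int_{\partial B_R}\partial_\nu z\,p_\e(z)$ vanishes because $z=0$ on the boundary, and then passes to the limit, as in \cite{V}. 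For the specific solutions to which the paper applies the lemma there is a third route close to your sketch: with matched cylindrical tails one has $|z|=a\,|w|$ with $a=m\xi^{m-1}\le C(t)\,|x|^{2}$ (since $\xi\gtrsim|x|^{-2/n}$ from below), so the weight $|x|^2$ exactly cancels $R^{-2}$ and the annulus term is bounded by $\int_{\{R\le|x|\le 2R\}}|w|$; but then one must first establish uniform integrability of $w(\cdot,s)$ over $s\in[t_1,t_2]$ \emph{before} discarding that term --- this bootstrap is exactly what the paper carries out in the harder weighted contraction Lemma \ref{lem-contr-weight}, and it is absent from your proposal. Deferring ``the precise approximation scheme'' to \cite{HP}, \cite{V} is legitimate for a lemma the paper itself only cites, but as written the one estimate you commit to is quantitatively wrong.
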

Observing  that 
$$\int_{\mathbb{R}^N} |u(x,t) - v(x,t) | \, dx = e^{(\beta N - \alpha)\tau} \, 
\int_{\mathbb{R}^N} |\bar{u}(y,\tau) - \bar{v}(y,\tau)|\, dy,$$ 
the  contraction property implies the following decay estimate. 

\begin{cor}
\label{cor-resc}
Let $u(\cdot,t), v(\cdot,t)$ be two solutions to \eqref{eqn-u} and let $\bar{u}(\cdot,\tau), \bar{v}(\cdot,\tau)$ be their rescalings, respectively. Then
\begin{equation}
\label{eq-exp-decay}
\int_{\mathbb{R}^N} |\bar{u}(y,\tau) - \bar{v}(y,\tau)|\, dy  \le e^{-(\beta N - \alpha)\, \tau} \int_{\mathbb{R}^N} |\bar{u}(y,0) - \bar{v}(y,0)|\, dy
\end{equation}
for all $ \tau\in (0,\infty).$
\end{cor}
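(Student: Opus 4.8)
The plan is to obtain Corollary \ref{cor-resc} directly from the scaling identity recorded immediately above its statement together with the $L^1$-contraction of Lemma \ref{lem-contraction}; no genuinely new analysis is needed, only careful bookkeeping of the change of variables.

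First I would make the scaling identity precise. Starting from the definition \eqref{eq-resc-left} of the left rescaling, I would invert it to write $u(x,t) = (T-t)^{\alpha}\,\bar u(x\,(T-t)^{\beta},\tau)$ with $t = T(1-e^{-\tau})$, and likewise for $v$. Substituting into $\int_{\mathbb{R}^N}|u(x,t)-v(x,t)|\,dx$ and performing the change of variables $y = x\,(T-t)^{\beta}$, whose Jacobian contributes $dx = (T-t)^{-\beta N}\,dy$, yields
\[
\int_{\mathbb{R}^N}|u(x,t)-v(x,t)|\,dx = (T-t)^{\alpha-\beta N}\int_{\mathbb{R}^N}|\bar u(y,\tau)-\bar v(y,\tau)|\,dy.
\]
Using $T-t = T e^{-\tau}$ this reproduces the observed identity, up to the $\tau$-independent constant $T^{\alpha-\beta N}$, which will cancel in the final step.

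Next I would apply Lemma \ref{lem-contraction} with $t_1 = 0$ (equivalently $\tau = 0$) and $t_2 = t$ (equivalently the given $\tau > 0$), giving
\[
\int_{\mathbb{R}^N}|u(x,t)-v(x,t)|\,dx \le \int_{\mathbb{R}^N}|u(x,0)-v(x,0)|\,dx.
\]
Then I would substitute the scaling identity into both sides, on the left at time $\tau$ and on the right at $\tau = 0$ (where $T-t = T$, so the prefactor is simply $T^{\alpha-\beta N}$). Dividing through by the common positive constant $T^{\alpha-\beta N}$ and transferring the surviving factor $e^{(\beta N-\alpha)\tau}$ to the right-hand side produces precisely \eqref{eq-exp-decay}.

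The computation is entirely elementary, so there is no real obstacle; the only point demanding care is the exponent bookkeeping, namely tracking the amplitude factor $(T-t)^{\alpha}$ against the Jacobian $(T-t)^{-\beta N}$ to confirm that the net power is $\alpha-\beta N$, and checking via $T-t = Te^{-\tau}$ that this exponentiates to $e^{(\beta N-\alpha)\tau}$ with the constant $T^{\alpha-\beta N}$ dropping out between the two times. I would also note that the scaling relation is an exact equality, so the two $L^1$ norms are finite or infinite together: when $\bar u(\cdot,0)-\bar v(\cdot,0)\notin L^1(\mathbb{R}^N)$ the asserted bound holds trivially, and otherwise the contraction estimate applies exactly as above.
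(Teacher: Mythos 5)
Your proposal is correct and is essentially the paper's own argument: the corollary is derived, exactly as you do, by combining the $L^1$-contraction of Lemma \ref{lem-contraction} with the change-of-variables identity relating $\int_{\mathbb{R}^N}|u-v|\,dx$ to $\int_{\mathbb{R}^N}|\bar u-\bar v|\,dy$ via $T-t=Te^{-\tau}$. Your bookkeeping is sound, and your explicit tracking of the $\tau$-independent constant $T^{\alpha-\beta N}$ (which the paper's displayed identity silently suppresses) and of the trivial non-integrable case only makes the argument more careful than the original.
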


\begin{proof}[Proof of Theorem \ref{thm-conv}]
By  \eqref{eq-comp} and \eqref{eqn-ss}  we have 
$$\bar{u}(y,\tau) \le f_{\beta, B_1}(y) \leq  f_{\beta, B_1}(0) \qquad (y,\tau)\in \R^N \times (0,\infty)$$
since the profile $f_{\beta,B_1}$ is decreasing in $|y|$. 
It follows that for any sequence $\tau_i\to \infty$,  the sequence of solutions 
 $\bar{u}_i(y,\tau) := \bar{u}(y,\tau_i+\tau)$ is uniformly bounded and hence 
 it is equicontinuous on compact subsets of $\R^N \times (-\infty,+\infty)$ 
 by  well-known equicontinuity result for solutions to fast diffusion equations (see  in \cite{DKB}). 
 Hence, by the    Arzela-Ascoli theorem  there exists a subsequence (still denoted by $\tau_i$)  such that
$\bar{u}_i \to  \bar{u}_{\infty}$  as $ i\to\infty$,  uniformly on compact subsets of $\R^n \times (-\infty,\infty)$.   

We will next show that, because  of our assumption  \eqref{eq-int},  
we have $\bar{u}_{\infty} \equiv f_{\beta,B}$,  with  $f_{\beta,B}$ denoting the profile function of the self-similar solution
$U_{\beta,B}$ defined by \eqref{eqn-ss}. 
To this end   we  apply \eqref{eq-exp-decay} to our solution $\bar{u}_i(y,\tau)$ and to the rescaled self-similar solution 
$f_{\beta,B}(y)$ to obtain 
\begin{equation}
\label{eq-let}
\int_{\mathbb{R}^N} |\bar{u}_i(y,\tau) - f_{\beta,B}(y)|\, dy \le e^{-(\beta N - \alpha)(\tau_i+\tau)} \int_{\mathbb{R}^N} |\bar{u}_0(y) - f_{\beta,B}(y)|\, dy.
\end{equation}
Note  that because of \eqref{eq-int}, we have  $\int_{\mathbb{R}^N} |\bar{u}_0(x) - f_{\beta,B}(x)|\, dx < \infty$. Let $i\to \infty$ in \eqref{eq-let} to conclude
$$\int_{\mathbb{R}^N} |\bar{u}_{\infty}(y,\tau) - f_{\beta,B}(y)|\, dy \le
\lim_{i\to \infty} \int_{\mathbb{R}^N} |\bar{u}_i(y,\tau) - f_{\beta,B}(y)|\, dy  = 0.$$
This implies $\bar{u}_{\infty} \equiv f_{\beta,B}$. 

We conclude that $\bar{u}(y,\tau)$  converges,  as $\tau\to\infty$, uniformly on compact subsets of $\R^N$  and also in the $L^1{(\mathbb{R}^N)}$ norm to the self-similar profile  $f_{\beta,B}$. The latter convergence is exponential and the exponential rate of convergence is at least   $e^{-(N\beta - \alpha)\tau}$.
\end{proof}

\medskip

\subsection{The case $\beta_0  < \beta < \beta_1$ and $N \ge 6$}

In this section we will give the proof of   Theorem \ref{thm-conv2}.  Let $\eta\in C^{\infty}_0(\mathbb{R})$ be a cut off  function such that $\eta(y) =1/2$ for $|y| < 1/2$ and $\eta(y) = 0$ for $|y| > 1$. Let $\eta_R(y) := \eta(y/R)$, $\eta_{\epsilon} := \eta({y}/{\epsilon})$ and $\eta_{R,\epsilon} := \eta_R(y) + \eta_{\epsilon}(y)$.
Note that  $|\Delta\eta_{R,\epsilon}| + |\nabla \eta_{R,\epsilon}|^2 \le C\, \epsilon^{-2}$  for ${\epsilon}/{2} \le |y| \le \epsilon$ and  $|\Delta \eta_{R,\epsilon}| + |\nabla\eta_{R,\epsilon}|^2 \le C\, R^{-2}$ for ${R}/{2} \le |y| \le R$. 
  
We start with the following weighted contraction result. We recall the weighted $L^1$ space  given by \eqref{eqn-wl1}
with  $p_0 \in (0,2m)$. 

\begin{lem}
\label{lem-contr-weight}
Let $\bar u, \bar v$  be any solutions  to  \eqref{eq-resc-left},  with initial data  $\bar{u}_0, \bar v_0$ respectively, 
satisfying $\bar u_0,\bar v_0 \leq f_{\beta,B_1}$. If 
$\max_{\mathbb{R}^N}|\bar{u}_0 -\bar v_0| \neq 0$ 
then 
$$\|(\bar{u} - \bar v_0)(\cdot,\tau) \bar{\C}^{p_0}\|_{L^1(\R^N)} < \|(\bar{u}_0 - \bar \C) \bar{\C}^{p_0}\|_{L^1(\R^N)}
\qquad \forall  \tau \ge 0.$$
\end{lem}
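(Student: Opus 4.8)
The plan is to establish the differential inequality
$\frac{d}{d\tau}\int_{\R^N}|\bar u-\bar v|\,\bar\C^{p_0}\,dy\le 0$, with strict inequality once $\bar u_0\neq \bar v_0$, and then integrate in $\tau$. Write $w:=\bar u-\bar v$ and $Q:=\bar u^m-\bar v^m$. Since $s\mapsto s^m$ is increasing, $Q$ has the same sign as $w$, and by the mean value theorem $|Q|=m\,\xi^{-n}|w|$ for some $\xi$ between $\bar u$ and $\bar v$. Subtracting the two copies of \eqref{eq-resc100} gives $w_\tau=\Delta Q+\beta\,\mathrm{div}(y\,w)+(\alpha-\beta N)w$. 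I would differentiate the weighted norm by multiplying this equation by $\sign_\delta(w)\,\bar\C^{p_0}\,\eta_{R,\epsilon}$ — a smooth monotone regularization $\sign_\delta$ of the sign composed with the cutoff $\eta_{R,\epsilon}$ introduced before the lemma — integrating, and then passing to the limit in $\delta\to 0$, $\epsilon\to 0$, $R\to\infty$ in this order.

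Next I would identify the three contributions. The zeroth-order term yields $(\alpha-\beta N)\int|w|\,\bar\C^{p_0}$, whose coefficient is \emph{positive} in the present range $\beta<\beta_1$. For the drift, one integration by parts together with the homogeneity identity $y\cdot\nabla\bar\C^{p_0}=-\tfrac{2p_0}{n}\,\bar\C^{p_0}$ converts $\int\sign(w)\,\beta\,\mathrm{div}(y\,w)\,\bar\C^{p_0}$ into $\beta\tfrac{2p_0}{n}\int|w|\,\bar\C^{p_0}$ up to cutoff errors, again with a positive sign. The diffusion term is the only source of negativity: the regularized Kato inequality lets me discard the non-negative gradient term concentrated on $\{w\approx 0\}$ and integrate by parts twice, producing $\int |Q|\,\Delta\bar\C^{p_0}$. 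Here the hypothesis $p_0\in(0,2m)$ is exactly equivalent to $\Delta\bar\C^{p_0}<0$ on $\R^N\setminus\{0\}$, so this term is negative; inserting $|Q|=m\,\xi^{-n}|w|$ and the barrier $\xi\le\max(\bar u,\bar v)\le f_{\beta,B_1}<\bar\C$ makes it a quantitatively negative multiple of a weighted integral of $|w|$, which in the interior overcomes the two positive terms.

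The genuinely delicate point — which I expect to be the main obstacle — is the behaviour as $R\to\infty$, where the weight is \emph{critical}. As recorded in the remark following Theorem \ref{thm-conv2}, $p_0$ is the largest exponent for which the weighted norm of a difference of distinct profiles $f_{\beta,B_1}-f_{\beta,B_2}$ just fails to be finite (the integrand \eqref{eqn-wl1} is borderline, decaying like $|y|^{-1}$ after the angular integration). The interior gain from $\xi<\bar\C$ degenerates precisely as $|y|\to\infty$, where $\xi\to\bar\C$ by the common cylindrical asymptotics \eqref{eqn-binfty} of $\bar u$ and $\bar v$, so the clean pointwise balance between the positive drift/zeroth-order terms and the negative diffusion term fails at the critical scaling. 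The outer cutoff errors, supported on $\{R/2\le|y|\le R\}$ and controlled by $R^{-2}$, must therefore be treated by hand using the matched second-order asymptotics \eqref{eqn-asympt1}: finiteness of $\|(\bar u_0-\bar v_0)\,\bar\C^{p_0}\|_{L^1}$ forces the leading $|y|^{-\gamma_1}$ coefficients of $\bar u$ and $\bar v$ to coincide, so $w$ decays strictly faster than the critical rate and the outer flux closes up with the correct sign as $R\to\infty$. (When the weighted norm of $\bar u_0-\bar v_0$ is infinite the asserted inequality is vacuous, so one may assume finiteness throughout.) By contrast, the inner errors on $\{\epsilon/2\le|y|\le\epsilon\}$, bounded by $\epsilon^{-2}$, vanish as $\epsilon\to0$ exactly because $\bar\C^{p_0}$ is integrable at the origin — once more the content of $p_0<2m$.

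Assembling the limits gives $\frac{d}{d\tau}\int|w|\,\bar\C^{p_0}\le 0$, so the weighted norm is non-increasing. The strict inequality for $\tau>0$ follows as in the unweighted $L^1$ contraction: the assumption $\max_{\R^N}|\bar u_0-\bar v_0|\neq 0$ and the strong maximum principle guarantee $w(\cdot,\tau)\not\equiv 0$, whence the non-negative Kato contribution discarded above is strictly positive and forces a strict loss of weighted mass.
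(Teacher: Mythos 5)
Your skeleton is the same as the paper's (Kato's inequality for $q=|\bar u-\bar v|$, multiplication by $\eta_{R,\epsilon}\bar\C^{p_0}$, the sign computation for $\Delta\bar\C^{p_0}$ on $(0,2m)$, the inner errors killed by $p_0<2m$), but there is a genuine gap in your treatment of the outer limit $R\to\infty$, and your diagnosis of where the difficulty lies is off. You claim the pointwise balance between the negative diffusion term and the positive drift/zeroth-order terms ``fails at the critical scaling'' as $|y|\to\infty$ and must be rescued by matched second-order asymptotics of $\bar u$ and $\bar v$. In fact the balance does not fail: since $\bar u,\bar v\le f_{\beta,B_1}<\bar\C$ (the strict comparison with the cylinder being exactly what Lemma \ref{lem-www} provides for $N\ge 6$, $\beta_0<\beta<\beta_1$), one has $a(y,\tau)> m|y|^2/C^*$ pointwise, and because $a\Delta\bar\C^{p_0}$, $y\cdot\nabla\bar\C^{p_0}$ and $\bar\C^{p_0}$ all scale identically in $|y|$, the full coefficient is a constant multiple $K_N$ of $\bar\C^{p_0}$; the choice $p_0=p_2$ makes $K_N\le 0$, and the strict inequality $a>m|y|^2/C^*$ upgrades this to the strict pointwise inequality \eqref{eqn-strict} on all of $\R^N$ (non-uniformly, but that is enough). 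The genuinely delicate step is only the justification of the cutoff limits, and here your proposed fix does not work: finiteness of $\|(\bar u_0-\bar v_0)\bar\C^{p_0}\|_{L^1}$ is an integrability statement and does not force the evolving solutions to admit pointwise expansions of the form \eqref{eqn-asympt1} with matched leading coefficients --- the $\bar u,\bar v$ in the lemma are arbitrary solutions dominated by $f_{\beta,B_1}$, not self-similar profiles, and ``$w$ decays strictly faster than the critical rate'' is unjustified at positive times. The paper instead bootstraps: it first integrates the cutoff inequality in time to propagate the weighted $L^1$ bound uniformly on $[0,\tau]$, deduces by uniform integrability that the time-integrated annular mass $\int_0^\tau\int_{R/2\le|y|\le R}q\,\bar\C^{p_0}$ tends to $0$ (this is \eqref{eqn-RR}), and then bounds the outer flux in \eqref{eq-use} by exactly such an annular term using $a\le C|y|^2$, $|\Delta\eta_R|\le C/R^2$. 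No pointwise decay of $w$ is ever needed.

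A second, smaller error: your strictness mechanism is wrong. You attribute the strict loss of weighted mass to the discarded Kato term being positive when $w$ changes sign; but if, say, $\bar u_0>\bar v_0$ everywhere, then the ordering is preserved, $w$ never vanishes, the Kato contribution is identically zero, and your argument gives only non-strict monotonicity. In the paper strictness comes from the bulk term: the coefficient in \eqref{eqn-strict} is strictly negative pointwise (again because $f_{\beta,B_1}<\bar\C$ strictly), so any region where $q>0$ produces a strict decrease of $\int q\,\bar\C^{p_0}$ regardless of sign changes of $w$. With these two repairs --- replacing the matched-asymptotics step by the propagation-plus-uniform-integrability argument, and relocating the source of strictness to \eqref{eqn-strict} --- your outline becomes the paper's proof.
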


\begin{proof} 
The condition $\bar u_0, \bar v_0 \leq f_{\beta,B_1}$ implies that $\bar u(\cdot,\tau), \bar v (\cdot,\tau) \leq f_{\beta,B_1}$
 for all $\tau \geq 0$,
since $f_{\beta,B_1}$ is a steady state of  the rescaled equation \eqref{eq-resc-left}. 
Recall also that in the case $N \ge 6$ we have shown in Lemma \ref{lem-www} that $w(s) < 0$ for all $s \in \R$,  which is equivalent to 
\begin{equation}
\label{eq-bound-U}
f_{\beta,B_1}(y) < \bar \C(y) \qquad \forall x\in \mathbb{R}^N
\end{equation}
where $\bar{\C}(y) = \left( C^* |y|^{-2}\right)^{1/n}$  with $C^*$ the constant in \eqref{eqn-cstar}
corresponding to the cylindrical metric.

Set $q := |\bar u- \bar{v}|$. A standard application of Kato's inequality implies that 
\begin{equation}
\label{eq-q}
q_\tau  \le \Delta(a q) + \beta\, \mbox{div}\, (y \, q) + (\alpha - N\beta)\, q
\end{equation}
in the distributional sense, where
\begin{equation}
\label{eq-a}
a(y,\tau) := \int_0^1 \frac{m\,d\theta}{(\theta\bar{\C} + (1-\theta)\bar{v})^n}.
\end{equation}
By the bound $\bar v \leq f_{\beta,B_1}$ and \eqref{eq-bound-U} we have 
\begin{equation}
\label{eq-a}
a(y,\tau) >  \frac m{\bar C^n} = \frac{m\, |y|^2}{C^*}.
\end{equation}
Let $\eta_{R,\epsilon}$ be the cut off function introduced  above. Equation \eqref{eq-q} and integration by parts yield 
\begin{equation}
\begin{split}
\label{eq-mon100}
\frac{d}{d\tau}\int_{\mathbb{R}^N} q \, \eta_{R,\epsilon} \, \bar{\C}^{p_0}\, dy &\le \int_{\mathbb{R}^N} \big [a\, \Delta \bar{\C}^{p_0} - \beta\,\nabla\bar{\C}^{p_0} \cdot y + (\alpha - N \beta)\, \bar{\C}^{p_0} \big ]\, q\, \eta_{R,\epsilon} \, dy  \nonumber \\
&+ \int_{\mathbb{R}^N} \big [\Delta \eta_{R,\epsilon} \bar{\C}^{p_0} a  + 2a  \, \nabla\eta_{R,\epsilon}\cdot\nabla\bar{\C}^{p_0} - \beta y\cdot\nabla\eta_{R,\epsilon} \bar{\C}^{p_0} \big ]\, q\, dy. 
\end{split}
\end{equation}
A direct calculation   shows that 
$$\Delta\bar{\C}^{p_0} = \frac 14 (N+2) (C^*)^{p_0/n}\,  p_0\, \big ( N\, (p_0 - 2) + 2( p_0+2) \big )\,  |x|^{-2 -  (N+2)p_0/2}.$$
We see that $\Delta\bar{\C}^{p_0} < 0$ for $p\in (0, 2m)$. Hence by  \eqref{eq-a} \begin{equation}
\label{eq-lap}
a\, \Delta\bar{\C}^{p_0} <  \frac{N-2}{4C^*} p_0 \, \big ( N(p_0 - 2) + 2(p_0 + 2) \big )\, \bar{\C}^{p_0}.
\end{equation}
Furthermore,
\begin{equation}
\label{eq-rest}
-\beta \, \nabla\bar{\C}^{p_0} \cdot y + (\alpha - N\beta) \, \bar{\C}^{p_0} = \frac 14 \bar{\C}^{p_0} (2+ N + 2\beta\, (2 - N + 2\beta (2-N + (2+N)\, p_0)).
\end{equation}
Estimates \eqref{eq-lap} and \eqref{eq-rest} imply
$$a\, \Delta\bar{\C}^{p_0} - \beta \nabla\bar{\C}^{p_0}\cdot x + (\alpha - N\beta)\bar{\C}^{p_0} < K_N\, \bar{\C}^{p_0}$$
with
$$K_N:= \frac{-4 - 2\beta(N-2)^2 + N^2 + 2(-1 + \beta)(N^2-4) p_0 + (N+2)^2 p_0^2}{4(N-2)}.$$
We see that 
\begin{equation}
\label{eqn-strict}
a\, \Delta\bar{\C}^{p_0} - \beta \, y \cdot \nabla\bar{\C}^{p_0} + (\alpha - N\beta)\, \bar{\C}^{p_0} < 0,
\qquad \mbox{if} \,\,\,  p_0 \in [p_1,p_2]\end{equation}with 
 $p_1 := m\left(1 - \beta - \sqrt{\beta^2 - \frac{4}{N-2}}\right)$ and  $p_2 := m\left(1 - \beta + \sqrt{\beta^2 - \frac{4}{N-2}}\right)$.
 We will  choose
 $p_0:=p_2$. 
Since $\beta < \beta_1$ and $N  \ge 6$, it is easy to check that
$p_1  > 0$ and $p_2 < 2m$, so that $p_0\in (0,2m)$. With this choice of $p_0$ we conclude from the above discussion
that 
\begin{equation}
\label{eq-split}
\begin{split}
\frac{d}{d\tau}\int_{\mathbb{R}^N} q \, \eta_{R,\epsilon} \, \bar{\C}^{p_0}\, dy &\leq   \int_{\mathbb{R}^N} \big [\Delta \eta_{R,\epsilon} \, \bar{\C}^{p_0} a  + 2a  \, \nabla\eta_{R,\epsilon}\cdot\nabla\bar{\C}^{p_0} - \beta y\cdot\nabla\eta_{R,\epsilon} \, \bar{\C}^{p_0} \big ]\, q\, dy  \\
&+ \int_{\mathbb{R}^N} \big [a\, \Delta \bar{\C}^{p_0} - \beta\, y \cdot \nabla\bar{\C}^{p_0}+ (\alpha - N \beta)\, \bar{\C}^{p_0} \big ]\, q\, \eta_{R,\epsilon} \, dy\\ 
&= \int_{|x| \le \epsilon}   \big [\Delta \eta_{R,\epsilon}\,  \bar{\C}^{p_0} a  + 2a  \, \nabla\eta_{R,\epsilon}\cdot\nabla\bar{\C}^{p_0} - \beta y\cdot\nabla\eta_{R,\epsilon} \, \bar{\C}^{p_0} \big ]\, q\, dy \\
&+  \int_{R/2 \le |x| \le R}\big  [\Delta \eta_{R,\epsilon} \, \bar{\C}^{p_0} a  + 2a  \, \nabla\eta_{R,\epsilon}\cdot\nabla\bar{\C}^{p_0} - \beta y\cdot\nabla\eta_{R,\epsilon} \, \bar{\C}^{p_0} \big ]\, q\, dy\\
&+ \int_{\mathbb{R}^N} \big [a\, \Delta \bar{\C}^{p_0} - \beta\, y \cdot \nabla\bar{\C}^{p_0}  + (\alpha - N \beta)\, \bar{\C}^{p_0} \big ]\, q\, \eta_{R,\epsilon} \, dy .
\end{split}
\end{equation} 
Observe  that since $p_0 < 2m$
\begin{equation}
\begin{split}
\int_{|x| \le \epsilon} \big [a\, \Delta \eta_{R,\epsilon} \bar{\C}^{p_0}  &+ 2a \, \nabla\eta_{R,\epsilon}\cdot\nabla\bar{\C}^{p_0} - \beta y \cdot\nabla\eta_{R,\epsilon} \bar{\C}^{p_0} \big ]\, q \, dy  \\
&\leq  
C\, \epsilon^{- ( \frac{n+2}{2}\, p_0 + 2)}\,  |B_{\epsilon}(0)|  \le C\,\epsilon^{n - (p_0\,\frac{n+2}{2} + 2)} \to 0  \,\,\, \mbox{as} \,\,\, \epsilon\to 0.
\end{split}
\end{equation}
If we let $\epsilon\to 0$ in \eqref{eq-split}, then since the $\lim_{\epsilon\to 0} \eta_{R,\epsilon} = \eta_R$, where $\eta_R$ is a  smooth function with compact support in $R/2 \le |y| \le R$, we obtain 
\begin{equation}
\label{eq-use}
\begin{split}
\frac{d}{d\tau}\int_{\mathbb{R}^N} q \, \eta_{R} \, \bar{\C}^{p_0}\, dy & \leq    \int_{R/2 \le |x| \le R} \big [\Delta \eta_R \bar{\C}^{p_0} a  + 2a  \, \nabla\eta_R\cdot\nabla\bar{\C}^{p_0} - \beta y \cdot\nabla\eta_R \bar{\C}^{p_0} \big ]\, q\, dy \\&
+ \int_{\mathbb{R}^N} \big [a\, \Delta \bar{\C}^{p_0} - \beta\,y \cdot \nabla\bar{\C}^{p_0}  + (\alpha - N \beta)\, \bar{\C}^{p_0} \big ]\, q\, \eta_{R} \, dy.
\end{split}
\end{equation}

We claim next that if $\bar u_0 - \bar v_0 \in L^1(\bar{\C}^{p_0},\mathbb{R}^N)$ then $\bar u(\cdot,s) - \bar v(\cdot,s) \in L^1(\bar{\C}^{p_0},\mathbb{R}^N)$ for  $s \in [0,\tau]$ uniformly in $s$. Indeed recalling  that we have chosen   $p_0= p_2$  and  integrating the previous estimate over $[0, s]$
(with $s \in  [0,\tau]$) also using \eqref{eqn-strict}  we  get 
\begin{equation}
\begin{split}
&  \int_{\mathbb{R}^N} q(x,s)\, \eta_R\, dy \, - \, \int_{\mathbb{R}^N} q(x,0)\, \eta_R\, dy \\
&\leq  \int_{0}^{s}\int_{{R}/{2} \le |y| \le R} \,\big [\Delta \eta_R\, \bar{\C}^{p_0} a  + 2a\, \nabla\eta_R\cdot\nabla\bar{\C}^{p_0} - \beta y\cdot\nabla\eta_R \bar{\C}^{p_0} \big ]\, q\, dy \, d \bar s\\
&\le C(\tau)\, \int_0^{\tau}\int_{{R}/{2} \le |y| \le R} |y|^{- \big (\frac{2(1+p_0)}{1-m}+\gamma \big )}\, dy \, d\bar s  \le C(\tau).
\end{split}
\end{equation}
From the uniform integrability of  $q(\cdot,s)\in L^1(\bar{\C}^{p_0}, \mathbb{R}^N)$ on $s \in [0,\tau]$, 
we conclude that for any fixed $\tau >0$, we have 
\begin{equation}\label{eqn-RR}
\lim_{R \to \infty} \int_{0}^{\tau}\int_{R/2 \le |y| \le R} q(x,s) \, \bar{\C}^{p_0}\, dy\, ds=0.
\end{equation}
On the other hand, since  $|\Delta\eta_R| \le {C}/{R^2}$, $|\nabla\eta_R|  \le {C}/{R}$, $a(y,\tau) \le C\, |y|^2$ and $|\nabla \bar{C}^{p_0}| \le C\, |y|\,  \bar{C}^{p_0}$, \eqref{eq-use} implies
$$\frac{d}{d\tau}\int_{\mathbb{R}^N} q \, \eta_R\,  \bar{C}^{p_0}\, dy \le 
 \int_{\mathbb{R}^N} \big [a\, \Delta \bar{\C}^{p_0} - \beta\,y \cdot \nabla\bar{\C}^{p_0} + (\alpha - N \beta)\, \bar{\C}^{p_0} \big ]\, q\, \eta_{R} \, dy + C\int_{{R}/{2} \le |y| \le R} q\, \bar{C}^{p_0} dy. 
$$
Integrating the above differential inequality over $[0, \tau]$ and  letting $R \to \infty$,   while using 
\eqref{eqn-RR} and \eqref{eqn-strict},   gives
$$\int_{\mathbb{R}^N} |\bar u - \bar v|   \,  \bar{C}^{p_0}\, dy < 
\int_{\mathbb{R}^N} |\bar u_0 - \bar v_0|  \,  \bar{C}^{p_0}\, dy,$$
finishing the proof of the lemma. 
\end{proof}

\begin{proof}[Proof of Theorem \ref{thm-conv2}]
Once we have Lemma \ref{lem-contr-weight}, which is the analogue of Lemma 4.1 in \cite{DS}  we finish the proof of Theorem \ref{thm-conv2} in  the same way as the proof of Theorem 1.2 in \cite{DS}.
\end{proof}

\section{Solutions that live longer} \label{sec-longer}

In Proposition \ref{prop-tail} we showed how the  cylindrical tail shrinks  in solutions that start as being asymptotic to a cylinder at infinity.  In Theorems \ref{thm-conv} and  \ref{thm-conv2} we dealt with the extinction profile of the class of solutions that become extinct at the time that their cylindrical tail disappears. In this section we give the proof of Theorem \ref{thm-longer} that describes the precise extinction profile of a class of solutions that live longer than their  cylindrical tail.

\begin{proof}[Proof of Theorem \ref{thm-longer}] We claim that given our initial conditions there exists a $B_1 >0$ such that
$$u_0 \leq V_{\beta,B_1}(\cdot,0) \qquad \mbox{on} \,\, \R^N.$$
To show the claim first  note that by our assumption \eqref{eqn-u0} there exist  $B_1 > 0$ and $r_0 \geq 1$ sufficiently large  so that
$$u_0(x) \le V_{\beta,B_1}(x,0) \qquad \mbox{for} \,\,\, |x| \ge r_0.$$
Recall that $V_{\beta,B_1}(x,t) = (T-t)^{\alpha} g_{B_1}(x(T-t)^{\beta})$ for $(x,t) \in \mathbb{R}^N\times (-\infty,T)$ and the behavior of $g_{B_1}$ has been discussed in Theorem \ref{thm-ss-sing}.
On the other hand recall that in section \ref{sec-scal} we  defined $g_{B_{\lambda}}(y) := \lambda^{2/n} g_{B_0}(\lambda y)$ where $B_0$ is chosen so that the $\lim_{y \to 0} |y|^{{\alpha}/{\beta}} g_{B_0}(y) = 1$. Recall that $\alpha =  (2\beta+1)/n$. 
We also have $B_{\lambda} = B_0 \lambda^{-\gamma}$. Using that $\lim_{\lambda\to 0} (\lambda |y|)^{{\alpha}/{\beta}} g_{B_0}(\lambda y) = 1$ we have 
$$\lim_{\lambda\to 0} g_{B_{\lambda}}(y) =  |y|^{-{\alpha}/{\beta}} \lim_{\lambda\to 0}  \lambda^{2/n-\alpha/\beta} = 
|y|^{-{\alpha}/{\beta}}  \lim_{\lambda\to 0} \lambda^{-{1}/(n\beta)} = +\infty.$$
This convergence is uniform in $y$ on the set $\{|y| \le r_0\}$. This means that by choosing $\lambda$ sufficiently small (which corresponds to $B_{\lambda}$ sufficiently large)  and increasing  the previously chosen $B_1$, if necessary, so that $B_1 \ge B_{\lambda}$  we have 
$$u_0(x) \le V_{\beta,B_1}(x) \qquad \mbox{for} \,\,\, |x| \le r_0.$$
This concludes the proof of the claim.

Let $W_{\beta,K_1}$ be the corresponding forward solution defined by  \eqref{eqn-expan} so that
$$\lim_{t \to T^-} V_{\beta,B_1}(x,t) = \lim_{t \to T^+}  W_{\beta,K_1} (x,t) = K_1 \, |x|^{-\alpha/\beta}.$$
Notice  that the above convergence is uniform on compact subsets
of $\R^N \setminus \{ 0 \}$ and also in  $L^1_{loc}(\R^N)$. 
Since $u_0 \leq V_{\beta,B_1}(\cdot,0)$, it follows by the comparison principle that 
\begin{equation}
\label{eq-left-bound1}
u (x,t) \leq  V_{\beta,B_1}(x,t) \qquad \mbox{for} \,\,\, (x,t) \in \mathbb{R}^N \times [0,T].
\end{equation}
In particular,
$u(x,T) \leq  \lim_{t \to T^-} V_{\beta,B_1}(x,t) =K_1 \, |x|^{-\alpha/\beta}$, so again by the comparison principle we have
\begin{equation}
\label{eq-right-bound2}
u(x,t) \le W_{\beta,K_1}(x,t) \qquad \mbox{for}  \,\,\, x\in \mathbb{R}^N, \,\,\, t>T.
\end{equation} 
Note that since  $u(\cdot,t), V_{\beta,B}(\cdot,t) \in L^1_{loc}(\mathbb{R}^N)$ for all $t\in [0,T)$, we can apply Corollary \ref{cor-resc} to 
show that the rescaled solution $\bar u$ as in  \eqref{eq-resc-left} satisfies 
$$\int_{\mathbb{R}^N} |\bar{u}(y,\tau) - g_{\beta,B}(y)|\, dy \le e^{-(\beta N - \alpha)\, \tau} \int_{\mathbb{R}^N} |\bar{u}_0(y) - g_{\beta,B}(y)|\, dy.$$
Our  assumption  $u_0  - V_{\beta,B}(\cdot,0) \in L^1(\mathbb{R}^N)$  implies that 
$\bar{u}_0 - g_{\beta,B} \in L^1(\mathbb{R}^N)$  and  by \eqref{eq-left-bound1}  we have 
$$\bar{u}(y,\tau) \le g_{\beta,B_1}(y),   \qquad \mbox{for} \,\,\, (y,\tau) \in \mathbb{R}^N \times [0,T].$$
The same arguments as in the proof of Theorem \ref{thm-conv} imply that $\bar{u}(\cdot,\tau)$ converges  as $\tau\to\infty$ uniformly on compact sets of $\R^n \setminus \{0\}$, and also  in the $L^1$ norm, to the singular self similar profile $g_{\beta,B}(x)$. 
Moreover, since $\beta N > \alpha$ the $L^1$ convergence  is exponential.
\smallskip

We recall that  $\lim_{t\to T-} V_{\beta,B}(x,t) = K \, |x|^{-\alpha/\beta}$ for some $K >0$. Let $W_{\beta,K}$ be the corresponding forward solution  as in \eqref{eqn-expan} so that the $\lim_{t\to T^+} W_{\beta,K} = K \, |x|^{\alpha/\beta}$. By the $L^1$ contraction property applied to $u(\cdot,t) - W_{\beta,K}(\cdot,t)$ we have for $t > T$,
\begin{equation}
\begin{split}
\int_{\mathbb{R}^N} |u(x,t) &- W_{\beta,K}(x,t)|\, dx \le  \lim_{t\to T^+} \int_{\mathbb{R}^N} |u(x,t) - W_{\beta,K}(x,t)|\, dx \\
&= \lim_{t\to T^-} \int_{\mathbb{R}^N} |u(x,t) - V_{\beta,B}(x,t)|\, dx \le \int_{\mathbb{R}^N} |u_0(x) - V_{\beta,B}(x,0)|\, dx \le C.
\end{split}
\end{equation}
Under the rescaling defined by  \eqref{eq-resc-right} the previous estimate becomes
\begin{equation}
\label{eq-int-100}
\int_{\mathbb{R}^N} |\hat{u}(y,\tau) - h_{\beta,K_1}(y)|\, dy \le C e^{(n\beta - \alpha)\,\tau}
\end{equation}
for $\tau\in (-\infty, \tau^*)$ with $\tau^*:= \log (T^*/T)$.  Moreover the bound \eqref{eq-right-bound2} becomes 
$$0 \le \hat{u}(y,\tau) \le h_{\beta,K_1}(y) \qquad \mbox{forall} \,\,\, (y,\tau) \in \mathbb{R}^N\times (-\infty, \tau^*).$$ 
This together with \eqref{eq-int-100} yields the convergence of the rescaled solutions $\hat{u}(\cdot,\tau)$ as $\tau\to -\infty$
 uniformly on compact sets  of $\mathbb{R}^N\backslash \{0\}$  and exponentially in the $L^1$ norm to the singular self-similar profile $h_{\beta,K_1}$. This in particular
shows that $u(\cdot,T) > 0$. Finally, since $g_{\beta,K_1}(y) = O(|y|^{-(N+2})$, as $|y| \to \infty$, 
the same must hold for $u(\cdot,t)$ for $t >T$ since $W_{\beta,K_1}$ dominates $u$.  
\end{proof} 

As a consequence of the proofs of the Theorems \ref{thm-conv}, \ref{thm-conv2} and \ref{thm-longer} we have the following Corollary that 
in particular gives other  examples of solutions that live longer than the extinction time of their cylindrical tail.

\begin{cor}
\label{cor-asymp}
Let   $u:\R^N\times [0,T)\to\R$ be a  solution to \eqref{eqn-u} with the initial data $u_0 \in L^\infty_{loc}(\R^N)$ satisfying
\begin{equation}
\label{eq-diff-in-cond}
u_0(x) =  \left(\frac{C^* T}{|x|^2}\right)^{1/n}\, (1 - B\, |x|^{-\gamma} + o(|x|^{-\gamma}) \qquad \mbox{as} \,\,\, |x|\to\infty
\end{equation}
where $B > 0$ and $\gamma := \gamma_1$. 
\begin{itemize}
\item[(i)]
If $\beta \ge \beta_1$ and $N \ge 3$ and  $u_0 - U_{\beta,B}(\cdot,0) \in L^1(\R^N)$ for some $B > 0$, then we have the same conclusion as in Theorem \ref{thm-conv} (just replace the $C^0$ convergence on compact subsets of $\R^N$ by the uniform $C^0$ convergence on compact subsets away from the origin).
 \item[(i)]
If  $\beta_0 < \beta < \beta_1$ and $N \ge 6$ and $u_0 - U_{\beta,B}(\cdot,0) \in L^1(\R^N, \C^{p_0})$ for some $B > 0$ and $p_0$ as in Theorem \ref{thm-conv2}, the same  conclusion as in  Theorem \ref{thm-conv2} holds (just replace the $C^0$ convergence on compact subsets of $\R^N$ by the uniform $C^0$ convergence on compact subsets away from the origin).
\end{itemize}
\end{cor}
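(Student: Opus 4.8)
The plan is to re-run the arguments of Theorems \ref{thm-conv} and \ref{thm-conv2} after replacing their global upper barrier $u_0 \le U_{\beta,B_1}(\cdot,0)$ by the weaker hypothesis that $u_0$ only carries the minus-sign cylindrical asymptotics \eqref{eq-diff-in-cond}. The key point is that \eqref{eq-diff-in-cond} matches the smooth profile $f_{\beta,B}$ at infinity (by \eqref{eqn-asympt1}) but says nothing near the origin, where $u_0 \in L^\infty_{loc}(\R^N)$ may exceed the cylinder; consequently the solution need not vanish at $T$, and the rescaled function $\bar u$ may blow up at the origin as $\tau \to \infty$. This is precisely why the $C^0$ convergence must be restricted to compact subsets of $\R^N \setminus \{0\}$. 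Accordingly, I would supply two ingredients: (a) a bound on $\bar u$ that is uniform on compact subsets away from the origin, yielding compactness, and (b) the ($L^1$ or weighted $L^1$) contraction that identifies every subsequential limit as $f_{\beta,B}$.

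For part (i), where $\beta \ge \beta_1$, I would obtain (a) exactly as in the proof of Theorem \ref{thm-longer}: the minus-sign asymptotics give $u_0 < \C < V_{\beta,B_1}(\cdot,0)$ for $|x|$ large, while the blow-up $g_{\beta,B_\lambda}(y) \to +\infty$ as $\lambda \to 0$ recorded in subsection \ref{sec-scal} lets one dominate the locally bounded data near the origin by enlarging $B_1$; hence $u_0 \le V_{\beta,B_1}(\cdot,0)$ on $\R^N$ and the comparison principle gives $\bar u \le g_{\beta,B_1}$ for all $\tau$, a bound that is finite and continuous on $\R^N \setminus \{0\}$ (and blows up only at the origin, consistent with the solution living past $T$). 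Equicontinuity for fast-diffusion solutions \cite{DKB} and Arzela--Ascoli then produce local uniform subsequential limits away from the origin. For (b) I would apply the $L^1$-contraction of Corollary \ref{cor-resc} to $\bar u$ and $f_{\beta,B}$: since \eqref{eq-diff-in-cond} together with $u_0 - U_{\beta,B} \in L^1(\R^N)$ gives $\bar u_0 - f_{\beta,B} \in L^1$, the estimate $\int_{\R^N}|\bar u(\cdot,\tau)-f_{\beta,B}|\,dy \le e^{-(\beta N-\alpha)\tau}\int_{\R^N}|\bar u_0 - f_{\beta,B}|\,dy$ identifies the limit as $f_{\beta,B}$ and yields $L^1$ convergence, exponential when $\beta > \beta_1$, exactly as in Theorem \ref{thm-conv}.

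Part (ii), with $\beta_0 < \beta < \beta_1$ and $N \ge 6$, is where the real work lies, and I expect it to be the main obstacle. The singular self-similar solutions $V_{\beta,B}$ do not exist in this range (Theorem \ref{thm-ss-sing} requires $\beta > \beta_1$), so the clean singular barrier of part (i) is unavailable; for (a) I would instead extract local $L^\infty$ bounds away from the origin from the weighted $L^1$ control together with the local smoothing/equicontinuity estimates \cite{DKB}. For (b) I would invoke the weighted contraction of Lemma \ref{lem-contr-weight} with weight $\bar\C^{p_0}$, $p_0 = m(1-\beta+\sqrt{\beta^2-4/(N-2)})$, using $u_0 - U_{\beta,B} \in L^1(\bar\C^{p_0},\R^N)$. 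The delicate step is that the hypothesis $\bar u_0 \le f_{\beta,B_1}$ of that lemma was used only to keep the diffusion coefficient $a$ bounded below by $m|y|^2/C^*$, that is, to guarantee $\bar u < \bar\C$ via \eqref{eq-bound-U}; this can now fail near the origin precisely because the solution lives longer and $\bar u$ exceeds the cylinder there. The resolution I would attempt is to re-run the cut-off computation of Lemma \ref{lem-contr-weight}: since $\bar\C^{p_0}$ is integrable at the origin for $p_0 \in (0,2m)$, the inner boundary terms and the bad-sign bulk contribution over $\{|y| \le \epsilon\}$ should vanish as $\epsilon \to 0$, so that the monotone decrease of the weighted $L^1$ distance survives, while the choice $p_0 = p_2$ together with the non-integrability $|U_{\beta,B_1}-U_{\beta,B_2}| \notin L^1(\bar\C^{p_0})$ noted in the Remark after Theorem \ref{thm-conv2} pins down the unique matching $B$. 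Combining (a) and (b) as in part (i) then gives uniform convergence of $\bar u$ to $f_{\beta,B}$ on compact subsets of $\R^N \setminus \{0\}$ and weighted $L^1$ convergence.
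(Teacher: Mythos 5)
Your part (i) is correct and is exactly the paper's argument: the paper dominates $u_0$ by $V_{\beta,B_1}(\cdot,0)$ via the same scaling argument as in the proof of Theorem \ref{thm-longer} (using that $g_{\beta,B_\lambda}\to+\infty$ locally uniformly as $\lambda\to 0$ to absorb the $L^\infty_{loc}$ data near the origin), applies the comparison principle to get $\bar u(\cdot,\tau)\le g_{\beta,B_1}$, and then reruns the proof of Theorem \ref{thm-conv} --- equicontinuity plus the unweighted $L^1$-contraction of Corollary \ref{cor-resc}, which needs no upper barrier --- concluding uniform convergence only on compact subsets of $\R^N\setminus\{0\}$ because $g_{\beta,B_1}$ is singular at the origin.

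For part (ii) you diverge from the paper, and your replacement argument has a concrete gap. The paper makes no case distinction at all: it uses the same barrier $u_0\le V_{\beta,B_1}(\cdot,0)$, hence $\bar u\le g_{\beta,B_1}$, and then simply says ``apply the proof of Theorem \ref{thm-conv2}.'' Your two objections to this --- that Theorem \ref{thm-ss-sing} and Lemma \ref{lem-V-B} construct $V_{\beta,B}$ only for $\beta>\beta_1$, and that Lemma \ref{lem-contr-weight} invokes $\bar u\le f_{\beta,B_1}<\bar{\C}$ precisely to get the lower bound \eqref{eq-a} on the coefficient $a$ --- are accurate readings of the paper, which is silent on both points. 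However, your proposed repair does not work as stated: the set on which \eqref{eq-a}, and hence the key strict inequality \eqref{eqn-strict}, fails is $\{\bar u>\bar{\C}\}$, and this set is determined by the solution, not by the cutoff. Since $u_0\in L^\infty_{loc}$ may exceed the cylinder on a fixed compact set, at $\tau=0$ (and for a definite range of $\tau$) the bad set is a ball of fixed radius; sending $\epsilon\to 0$ in $\eta_{R,\epsilon}$ --- which the proof of Lemma \ref{lem-contr-weight} already does at each fixed time --- leaves the bulk contribution over this ball untouched. Moreover the contribution there has the wrong sign with genuinely positive size: for $\beta<\beta_1$ both $(\alpha-N\beta)\bar{\C}^{p_0}>0$ and $-\beta\,y\cdot\nabla\bar{\C}^{p_0}>0$, while $a$ degenerates where $\bar u\gg\bar{\C}$, so the bracket in \eqref{eq-mon100} is positive on the bad set and monotone decay of the weighted norm fails. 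Closing the argument would require either an upper barrier above the cylinder valid for $\beta_0<\beta<\beta_1$ (which is what the paper tacitly assumes when it writes $V_{\beta,B_1}$ in this range) or a quantitative, time-integrable estimate of $q\,\bar{\C}^{p_0}$ over the (shrinking, but initially fixed-size) region $\{\bar u>\bar{\C}\}$; you supply neither, and your local $L^\infty$ bounds away from the origin in step (a) are likewise only sketched.
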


\begin{proof}
By the same arguments as in the proof of Theorem \ref{thm-longer} there exists a $B_1 > 0$ so that $u_0 \le V_{\beta,B_1}(\cdot,0)$ on $\R^N$. By the comparison principle we have $ u(x,t) \le V_{\beta,B_1}(x,t)$ for all $(x,t) \in \R^N\times [0,T)$. If we apply the rescaling \eqref{eq-resc-left} to $u$ this bound reads as $\bar{u}(y,\tau) \le g_{\beta,B_1}(y)$ for all $(y,t) \in \R^N \times [0, \infty)$.
We can now apply the proofs of Theorems \ref{thm-conv} and \ref{thm-conv2} to get the convergence statements in  (i) and (ii) respectively. The only difference is that, since $g_{\beta,B_1}$ is singular at the origin, 
we only have uniform  convergence on compact subsets of $\R^N \backslash \{0\}$, i.e. away from the origin. \end{proof}

\begin{rem}
One easily checks that $\gamma := \gamma_1 < (N+2)/{2}$. Consider  initial data  of the form 
$$u_0(x) := U_{\beta,B}(x) + f(a\, x)$$
for some $a >0$, where $f \in L^\infty(\R^N)$ and $f(x) = o(|x|^{-(N+2})$ as $|x|\to\infty$. Then $\gamma < (N+2)/{2}$ implies that 
$$u_0(x) = \left(\frac{C^*T}{|x|^2}\right)^{2/n} \big (1 - B\, |x|^{-\gamma} + o(|x|^{-\gamma} \big )$$
where $B >0$.  As in \cite{DS2} choose $a$ sufficiently small  that the vanishing time of the  solution $u$  to \eqref{eqn-u}  starting at $u_0$
is  $T^* > T$ while   by Proposition \ref{prop-cyl}  the cylindrical tail of $u$ becomes extinct at $T$. 
 We still have $u_0(x) - U_{\beta,B}(x) \in L^1(\R^N)$ so Corollary \ref{cor-asymp} applies to our solution.  
The same proof of Theorem 1.4 in \cite{DS2} yields that for $t >T$ 
$$u(x,t) \le {c(t)} \, {|x|^{-(N+2)}}  \qquad  \mbox{as}\,\, |x| \to +\infty.$$
\end{rem}

\end{document}